\newtheoremstyle{citing}
{}{}
{\itshape}
{}{\bfseries}
{.}
{ }
{\thmnote{#3}}
\theoremstyle{plain}
\newtheorem{thm}{Theorem}[section]
\newtheorem{corol}[thm]{Corollary}
\newtheorem{lemma}[thm]{Lemma}
\newtheorem{prop}[thm]{Proposition}
\newtheorem{example}[thm]{Example}
\numberwithin{equation}{section}
\theoremstyle{definition}
\newtheorem{defn}[thm]{Definition}
\newtheorem{remark}[thm]{Remark}
\theoremstyle{citing}
\newcommand{\inv}[1]{\frac{1}{ #1 }}
\newcommand{\maxsym}{\vee}
\newcommand{\minsym}{\wedge}
\newcommand{\F}{\mathcal{F}}
\newcommand{\G}{\mathcal{G}}
\renewcommand{\P}{\mathbb{P}}
\newcommand{\E}{\mathbb{E}}
\newcommand{\eps}{\varepsilon}
\renewcommand{\O}{\Omega}
\newcommand{\one}{\textbf{1}}
\newcommand{\umd}{\textsc{umd} }
\newcommand{\R}{\mathbb{R}}
\newcommand{\N}{\mathbb{N}}
\newcommand{\n}{\Vert}
\begin{document}

\author{Sonja Cox}
\author{Mark Veraar}

\address{Delft Institute of Applied Mathematics\\
Delft University of Technology \\ P.O. Box 5031\\ 2600 GA Delft\\The
Netherlands} \email{S.G.Cox@tudelft.nl, M.C.Veraar@tudelft.nl}

\title[Decoupling]{Vector-valued decoupling and the Burkholder-Davis-Gundy inequality}

\keywords{vector-valued decoupling inequalities, tangent sequences, UMD Banach spaces, vector-valued stochastic integration, Burkholder-Davis-Gundy inequalities}

\subjclass[2000]{Primary: 60E15 Secondary: 60B11, 46B09, 60H05}

\thanks{The authors wish to thank an anonymous referee for his/her thorough reading of the manuscript, and for the many useful comments which improved the results and the presentation of the paper. The second named author was originally supported by the VICI subsidy 639.033.604 and later on by the VENI subsidy 639.031.930 of the Netherlands Organisation for Scientific Research (NWO)}

\date\today

\begin{abstract}
Let $X$ be a \mbox{(quasi-)}Banach space. Let $d=(d_n)_{n\geq 1}$ be an $X$-valued sequence of random variables adapted to a filtration $(\mathcal{F}_n)_{n\geq 1}$ on a probability space $(\Omega,\mathcal{A},\P)$, define $\mathcal{F}_{\infty}:=\sigma(\mathcal{F}_n:n\geq 1)$ and let $e=(e_n)_{n\geq 1}$ be a $\mathcal{F}_{\infty}$-conditionally independent sequence on $(\Omega,\mathcal{A},\P)$ such that $\mathcal{L}(d_n\,|\,\F_{n-1}) = \mathcal{L}(e_n\,|\,\mathcal{F}_{\infty})$ for all $n\geq 1$ ($\F_0=\{\O,\varnothing\}$). If there exists a $p\in
(0,\infty)$ and a constant $D_p$ independent of $d$ and $e$ such that one has,
for all $n\geq 1$,
\begin{align}\label{abs:decoup}
\mathbb{E} \Big\n \sum_{k=1}^{n} d_k \Big\n^p & \leq D_p^p \mathbb{E} \Big\n \sum_{k=1}^{n} e_k
\Big\n^p, \tag{*}
\end{align}
then $X$ is said to \emph{satisfy the decoupling inequality for $p$}. It has
been proven that $X$ is a \textsc{umd} space if and only if both
\eqref{abs:decoup} and the reverse estimate hold for some (all)
$p\in(1,\infty)$. However, in earlier work we proved that the space $L^1$,
which is not a \textsc{umd} space, satisfies the decoupling inequality for all
$p\geq 1$.

Here we prove that if the decoupling inequality is satisfied in $X$ for some
$p\in(0,\infty)$ then it is satisfied for all $p\in (0,\infty)$. We consider
the behavior of the constant $D_p$ in \eqref{abs:decoup}. We examine its
relation to the norm of the Hilbert transform on $L^p(X)$ and showing that if
$X$ is a Hilbert space then there exists a universal constant $D$ such that \eqref{abs:decoup} holds with $D_p=D$, for all $p\in [1,\infty)$.

An important motivation to study decoupling inequalities is that they play a
key role in the recently developed theory for stochastic integration in Banach
spaces. We extend the available theory, proving a $p^\textrm{th}$-moment
Burkholder-Davis-Gundy inequality for the stochastic integral of an $X$-valued
process, where $X$ is a \textsc{umd} space and $p\in(0,\infty)$.
\end{abstract}

\maketitle

\section{Introduction\label{sec:intro}}

In this article we study a decoupling inequality for $X$-valued tangent sequences, where $X$ is a \mbox{(quasi-)}Banach space  (see Section \ref{s:pre}). Our motivation lies in the role this inequality plays in the development of theory for stochastic integration in Banach spaces \cite{mcconnell:decoup}, \cite{vanNeervenVeraarWeis}; we shall elaborate on this below. However, the decoupling inequality has attracted attention in its own right, see \cite{delaPG}, \cite{KwWo} and references therein. Let us begin with a formal definition of the decoupling inequality.\par
Let $X$ be a \mbox{(quasi-)}Banach space. Let $(\O, (\mathcal{F}_n)_{n\geq 1},{\mathcal A}, \P)$ be a complete probability space and let $(d_n)_{n\geq 1}$ be an $(\mathcal{F}_n)_{n\geq 1}$-adapted sequence of $X$-valued random variables. We adopt the convention that $\F_0=\{\Omega,\varnothing\}$. Set $\F_{\infty}:=\sigma(\F_n:n\geq 1)$. A \emph{$\F_{\infty}$-decoupled tangent sequence of} $(d_n)_{n\geq 1}$ is a sequence $(e_n)_{n\geq 1}$ of $X$-valued random variables on $(\O, {\mathcal A}, \P)$ satisfying two properties. Firstly, we assume that for any $B\in \mathcal{B}(X)$ (the Borel-measurable sets of $X$) we have:
\[\begin{aligned}
\P(d_n\in B\,|\,\mathcal{F}_{n-1})=\P(e_n\in B\,|\,\F_{\infty}),
\end{aligned}\]
and secondly we assume that $(e_n)_{n\geq 1}$ is $\F_{\infty}$-conditionally independent, i.e.\ for
every $n\geq 1$ and every $B_1,\ldots,B_n\in\mathcal{B}(X)$ we
have:
\[\begin{aligned}
\P(e_1\in B_1,\ldots, e_n\in B_n\,|\,\F_{\infty})=\P(e_1\in
B_1\,|\,\F_{\infty})\cdot \ldots \cdot \P(e_n\in B_n\,|\,\F_{\infty}).
\end{aligned}\]
Here $\P(C\,|\,\mathcal{F}_{n-1}) = \E(\one_{C}\,|\,\mathcal{F}_{n-1})$ if $C\in \mathcal{A}$. We wish to emphasize that the definition of a decoupled tangent sequence depends on a filtration and on a sequence adapted to that filtration. However, in what follows we shall omit the reference to the $\sigma$-algebra if it is clear from the context.\par
Kwapie{\'n} and Woyczy{\'n}ski introduced the concept of decoupled tangent sequences in
\cite{KwWo1}. For details on the subject we refer to the monographs
\cite{delaPG,KwWo} and the references therein. It is shown there that given a sequence $(d_n)_{n \geq 1}$ of $(\F_n)_{n \geq 1}$-adapted random
variables on $(\Omega,\mathcal{A},\P)$ one can, by an extension of the probability space, construct a decoupled tangent sequence of $(d_n)_{n\geq 1}$. One easily checks that any two $\F_{\infty}$-decoupled tangent sequences of a $(\F_n)_{n\geq 1}$-adapted sequence $(d_n)_{n \geq 1}$ share the same law.\par
We recall the following basic example (see also Lemma \ref{lem:preddcoupl} below; and \cite[Section 4.3]{KwWo} and
\cite[Chapter 6]{delaPG} where many more examples can be found).
\begin{example}\label{e:decoup}
Let $d_n = \xi_n v_n$, where $(\xi_n)_{n\geq 1}$ is a sequence of independent random
variables with values in $\R$ and $(v_n)_{n\geq 1}$ is an $X$-valued $(\F_n)_{n\geq
0}$-predictable sequence; $\F_n := \sigma(\xi_1, \ldots, \xi_n)$ for $n\geq 1$ and $\F_0 := \{\O, \varnothing\}$. Let $(\tilde{\xi}_n)_{n\geq 1}$ be an independent copy of $(\xi_n)_{n\geq 1}$,
then a decoupled tangent sequence of $d_n$ is given by $e_n = \tilde{\xi}_n v_n$ for $n\geq 1$.
\end{example}
Let $(\xi_{n})_{n\geq 1}$ be a sequence (of $X$-valued random variables). The \emph{difference sequence of} $(\xi_n)_{n\geq 1}$ is the sequence $(\xi_n-\xi_{n-1})_{n\geq 1}$, with the understanding that $\xi_0\equiv 0$.
\begin{defn}\label{def:dec}
Let $X$ be a \mbox{(quasi-)}Banach space and let $p\in (0,\infty)$. We say that \emph{the decoupling
inequality holds in $X$ for $p$} if there exists a constant $D_p$ such that for all complete probability spaces $(\O,\mathcal{A},(\F_{n})_{n\geq 1},\P)$ and every $X$-valued $(\F_n)_{n\geq1}$-adapted $L^p$-sequence $f$ (i.e.\ $f=(f_n)_{n\geq 1}\subseteq L^p(\Omega,X)$):
\begin{align}\label{decoup}
\Vert f_n \Vert_{p} \leq D_p \Vert g_n \Vert_{p},
\end{align}
for every $n\geq 1$, where $g$ is a sequence whose difference sequence is an $\mathcal{F}_{\infty}$-decoupled tangent sequence
of the difference sequence of $f$. The least constant $D_p$ for which \eqref{decoup} holds is denoted by $D_p(X)$.
\end{defn}
We will refer to a sequence $g=(g_n)_{n\geq 1}$ whose difference sequence is a $\F_{\infty}$-decoupled tangent sequence
of the difference sequence of $f$, where $f$ is adapted to $(\F_n)_{n\geq 1}$, as {\em a $\F_{\infty}$-decoupled sum sequence of} $f$. As before, we omit the reference to the $\sigma$-algebra if it obvious. Note that inequality \eqref{decoup} holds for all $\F_{\infty}$-decoupled sum sequences of $f$ if it holds for some $\F_{\infty}$-decoupled sum sequences of $f$ as they are identical in law.
\begin{remark}
In situations where only the laws of $(d_n)_{n\geq 1}$ and its decoupled tangent sequence $(e_n)_{n\geq1}$ are relevant, as is the case in Definition \ref{def:dec}, it suffices to consider the probability space $([0,1]^{\N}, \mathcal{B}([0,1]^{\N}), \lambda_{\N})$ where $\lambda_{\N}$ is the Lebesgue product measure. This has been demonstrated in \cite{montgomery:represmart}, where it also has been shown that one may assume the sequences $(d_n)_{n\geq 1}$ and $(e_n)_{n\geq 1}$ to have a certain structure on that probability space, which is useful when trying to gain insight in the properties of decoupled sequences. However, the details are rather technical, so we will stick to the definition involving arbitrary probability spaces. \end{remark}
A natural question to ask is whether a \mbox{(quasi-)}Banach space $X$ that satisfies the decoupling inequality for some $p\in (0,\infty)$, automatically satisfies it for all $q\in (0,\infty)$. In \cite{CoxVeraar} we have shown that if the decoupling inequality is satisfied in a Banach space $X$ for some $p\in [1,\infty)$, then it is satisfied for all $q\in (p,\infty)$. This also follows from results presented in \cite{geiss:BMO}, see Remark \ref{r:geiss}. One of the main results of this article is that the decoupling inequality is in fact satisfied for all $q\in (0,\infty)$ if it is satisfied for some $p\in (0,\infty)$, see Theorem \ref{t:allpq} in Section \ref{s:allp}. As a result of that Theorem \ref{t:allpq} we may speak of a \mbox{(quasi-)}Banach space $X$ for which {\em the decoupling inequality holds}, meaning a space for which it holds for some, and hence all, $p\in (0,\infty)$.\par
A necessary condition for a Banach space to satisfy the decoupling inequality is that $X$ has finite cotype. This has been proven in \cite[Theorem 2]{Garling:RMTI}, see also \cite[Example 3]{CoxVeraar}, by proving that $c_0$ does not satisfy the decoupling inequality and then appealing to the Maurey-Pisier theorem. In fact, by Lemma \ref{lem:local} the decoupling property is local: if a Banach space $X$ satisfies the decoupling inequality for some $p\in (0,\infty)$ and Banach space $Y$ is finitely representable in $X$ then $Y$ satisfies the decoupling inequality for $p$, and $D_p(Y)\leq D_p(X)$. Moreover, we have demonstrated in \cite{CoxVeraar} that $D_p(L^p(S;Y)) = D_p(Y)$ whenever $Y$ is a Banach space satisfying the decoupling inequality for some $p\in [1,\infty)$. In Section \ref{s:allp} we show that this extends to $p\in (0,\infty)$, thus the $L^p$-spaces with $p\in (0,\infty)$ satisfy the decoupling inequality. This indicates that quasi-Banach spaces are a natural setting in which to study the 
decoupling inequality.\par
Although we refer to inequality \eqref{decoup} as \emph{the} decoupling inequality, various other types of decoupling inequalities have been studied.  Below we shall elaborate on some related inequalities and results, in particular we will shall consider the (randomized) \textsc{umd} inequality. Now we only wish to mention that every \textsc{umd} space satisfies the decoupling inequality (see Corollary \ref{cor:umd}), but the reverse does not hold, as $L^1$ is not a \textsc{umd} space.
\subsection*{Decoupling and vector-valued stochastic integrals}
We mentioned earlier that the decoupling inequality \eqref{decoup} is of interest to us for its applications to the theory of stochastic integration in Banach spaces. In fact, if the decoupling inequality holds in $X$ for some $p\in (0,\infty)$, then one obtains one-sided Burkholder-Davis-Gundy type inequalities for the $p^{\textrm{th}}$ moment of an $X$-valued stochastic process. We will demonstrate this in Section \ref{s:bdg}, where we also show that one obtains two-sided Burkholder-Davis-Gundy type inequalities from the two-sided decoupling inequality \eqref{decoup-2} below.\par
Using decoupling inequalities to prove Burkholder-Davis-Gundy type inequalities is not a novel idea. E.g.\ in \cite{Garling:BMandUMD} and \cite{vanNeervenVeraarWeis} Burkholder-Davis-Gundy type inequalities are obtained from (randomized) \textsc{umd} inequalities. Using their approach one does not obtain Burkholder-Davis-Gundy type inequalities for $p^{\textrm{th}}$ moments when $p\in (0,1]$. Moreover, the approach in \cite{Garling:BMandUMD} requires the stochastic process in the integrand to be adapted to the filtration generated by the Brownian motion, which we do not wish to assume.\par
In the recent work by Dirksen \cite{Dirk:12} the decoupling inequality \eqref{decoup} and its reverse estimate have been used to obtain moment estimates for $L^p$-valued Poisson stochastic integrals. As such estimates could not be obtained directly from the (randomized) \textsc{umd} inequalities, the decoupling inequality \eqref{decoup} seems to be the `right' inequality in the context of stochastic integration.
\subsection*{Best constants in the decoupling inequality}
The behavior of the decoupling constant $D_p(X)$ in \eqref{decoup} is of interest as it can be used to obtain the right (optimal) behavior of the constants in inequalities such as the Burkholder-Davis-Gundy inequality and
the Rosenthal inequality for martingale difference sequences as $p$ tends to
$\infty$ (see \cite[Theorem 7.3.2]{delaPG}). In
\cite{hitczenko:decoupineq} Hitczenko (also see \cite[Chapter 7]{delaPG}) proves the
remarkable result that if $X=\R$ then there is a universal constant $D_{\mathbb{R}}$ such
that \eqref{decoup} holds for all $p\in [1, \infty] $ with $D_p=D_{\R}$, i.e.
\begin{align}\label{uniformR}
D_{\mathbb{R}} &:= \sup_{p\in [1,\infty]} D_p(\mathbb{R}) < \infty.
\end{align}
In \cite{HitMS} the existence of a universal constant in \eqref{decoup} has been proven with the $L^p$-norms replaced by a large class
of Orlicz norms and rearrangement invariant norms on $\O$. The traditional approach to proving extrapolation results is by methods as introduced in \cite{BurkholderGundy_ExIn}. In \cite{geiss:BMO}, such extrapolation results have been stated in a BMO-framework with which one obtains estimates in a more general setting (see also Remark \ref{r:geiss}).\par
We will show that if $H$ is a Hilbert space, then $\sup_{p\in [1,\infty)}D_p(H)\leq D_{\mathbb{R}}$, where $D_{\mathbb{R}}$ is defined as in equation \eqref{uniformR} (see Corollary \ref{cor:uniformH}). It remains an open problem whether the constant $D$ in \eqref{decoup} can be taken independently of $p$ in the general case that $X$ is a \mbox{(quasi-)}Banach space. There is some hope that the use of Burkholder functions as in \cite{hitczenko:notes,mcconnell:decoup} could lead to results in this direction. However, in our situation nonsymmetric Burkholder functions are needed.
\subsection*{Other decoupling inequalities}\label{ss:de}
The decoupling inequality we consider is closely related to the Banach space property called \textsc{umd} (Unconditional convergence of Martingale Difference sequences). The class of \umd Banach spaces has been introduced by Burkholder in
\cite{Burkholder:Geom} (see also \cite{Bu3} for an overview) and has proven to be useful when extending classical harmonic
analysis \cite{Bou2,Fig90,Zim} and stochastic integration \cite{mcconnell:decoup,vanNeervenVeraarWeis} to the vector-valued
situation. In \cite{Hnonhom} the following equivalence has been used to obtain an extension of the nonhomogeneous $Tb$-theorem in \cite{NazTrVo} to the vector-valued setting: $X$ is a \textsc{umd} Banach space if and only if for all
(for some) $1<p<\infty$ there exist constants $C_p$ and $D_p$ such that one has:
\begin{align}\label{decoup-2}
C_p^{-1}\|g_n\|_p \leq \|f_n\|_p \leq D_p \|g_n\|_p,
\end{align}
for all $n\geq 1$ and all $X$-valued $L^p$-martingales $f$ adapted to some filtration $(\F_n)_{n\geq 1}$, and any $g$ that is a
decoupled sum sequence of $f$. The least constants for which \eqref{decoup-2} holds are denoted by $C_p(X)$ and $D_p(X)$. This equivalence result has been proven by both Hitczenko \cite{hitczenko:notes} and McConnell \cite{mcconnell:decoup}, and from their proofs it follows that $\max\{C_p(X),D_p(X)\}\leq \beta_p(X)$ where $\beta_p(X)$ is the \textsc{umd} constant of $X$.\par
The second inequality in \eqref{decoup-2} corresponds to the decoupling inequality \eqref{decoup} for $p\in (1,\infty)$, the only difference being that $f$ in \eqref{decoup-2} is assumed to be a $(\F_n)_{n\geq 1}$-martingale. It follows from Lemma \ref{l:condsym} below that this difference is artificial. For this we use that every $(\F_n)_{n\geq 1}$-adapted sequence $(f_n)_{n\geq 1}$ in $L^p(\Omega,X)$ with $p\in [1,\infty)$ such that $f_n-f_{n-1}$ is $\F_{n-1}$-conditionally symmetric is a martingale difference sequence. The reason we choose not to work with martingale difference sequences is that they are not well-defined for $p<1$.\par
The inequalities \eqref{decoup-2} allow for a way to `split' the \umd property into two weaker properties. The aforementioned randomized \umd spaces, which have been introduced in \cite{Garling:RMTI}, are obtained by `splitting' the \umd property in a different way, leading to the following inequalities:
\begin{align}\label{LERMT}
[\beta^{+}_p]^{-1} \Big\n \sum_{k=1}^{n}  \eps_k d_k \Big\n_p \leq \Big\n \sum_{k=1}^{n} d_k \Big\n_p \leq  \beta^{-}_p \Big\n \sum_{k=1}^{n}  \eps_k d_k \Big\n_p;\quad n\geq 1,
\end{align}
where $(d_k)_{k\geq 1}$ is an $X$-valued martingale difference sequence, $(\eps_k)_{k\geq 1}$ is a Rademacher sequence independent of $(d_k)_{k\geq 1}$, and $\beta^{-}_p,\beta^{+}_p$ are constants independent of $(d_k)_{k\geq 1}$ and $n$. It is an open question whether there exists a Banach space $X$ that fails to be a \textsc{umd} space but for which the first inequality in \eqref{LERMT} holds for all $X$-valued martingale difference sequence. However, it was demonstrated in \cite{geiss:counterexample} that for $p$ fixed there fails to exist a constant $c$ such that $\beta_p(X) \leq c C_p(X)$ for all Banach spaces $X$.\par
Note that the inequalities \eqref{LERMT} coincide with the inequalities \eqref{decoup-2} if one considers only those $f$ which are adapted to the dyadic filtration (Paley-Walsh martingales). However, in general the inequalities are different. For $X=\R$ we already mentioned that the constant in \eqref{decoup} is bounded as $p\rightarrow \infty$. However, the optimal constant for the second inequality in \eqref{LERMT} is $\mathcal{O}(\sqrt{p})$ as $p\rightarrow \infty$. Indeed, by the Khintchine inequalities there is a constant $C$ such that
\begin{align*}
\Big\n \sum_{k=1}^{n} d_k \Big\n_p &\leq  \beta^{-}_p(\R) \Big\n \sum_{k=1}^{n}  \eps_k d_k \Big\n_p
\leq C \sqrt{p} \beta^{-}_p(\R) \Big\n \Big(\sum_{k=1}^{n}  |d_k|^2\Big)^{\frac12} \Big\n_p.
\end{align*}
Since the best constant in the above square function inequality is $p-1$ if $p\geq 2$ (see \cite[Theorem 3.3]{Burkeplo}) we deduce that $C \sqrt{p} \beta^{-}_p(\R)\geq p-1$ and therefore the above claim follows.
Furthermore, there is an example in \cite{Garling:RMTI} showing that there exist Banach lattices with finite cotype that do \emph{not} satisfy \eqref{LERMT}. However, the martingales constructed to prove this are \emph{not} Paley-Walsh martingales, which means there is hope that all Banach lattices with finite cotype satisfy the decoupling inequality \eqref{decoup}. (For the definition and theory of type and cotype we refer the reader to \cite{DiesJarTon}.)\par
The monograph \cite{delaPG} and the references therein provide a good overview of the various decoupling inequalities that have been studied. For the case that $X=\R$ the decoupling inequality \eqref{decoup} has been studied among others by De la Pe\~na, Gin\'e, Hitczenko and Montgomery-Smith, see
\cite[Chapter 6 and 7]{delaPG}, \cite{hitczenko:decoupineq} and \cite{HitMS}. Another important decoupling inequality that is studied in \cite[Chapters 3-5]{delaPG} has been proven to hold in all Banach spaces \cite{delaPena94}, whereas it has been demonstrated by Kalton \cite{kalton_Raddecoup} that it fails in some \mbox{quasi-}Banach spaces.
\section{Random sequences in quasi-Banach spaces}\label{s:pre}
As explained in the introduction we consider decoupling inequalities in the
setting of quasi-Banach spaces. The definition of a quasi-Banach space is identical to that of a Banach space, except that the triangle inequality is replaced by
$$\|x+y\|\leq C(\|x\|+\|y\|),$$
for all $x,y\in X$, where $C$ is some constant independent of $x$
and $y$. We shall only need some basic results on such spaces, and we refer the reader to \cite{Kalton_handbook} and references therein for general theory and more advanced results.
\begin{defn}\label{def:rnorm}
Let $X$ be a quasi-Banach space. We say that $X$ is an $r$-normable quasi-Banach space for some $0<r\leq 1$ if there exists a constant $C>0$ such that
\begin{align*}
\Big\n \sum_{j=1}^{n} x_j \Big\n^{r} &\leq C \sum_{j=1}^{n} \n x_j \n^{r}
\end{align*}
for any sequence $(x_j)_{j=1}^n \subseteq X$.
\end{defn}
The space $L^p(0,1)$ with $p\in (0,1)$ is an example of a
$p$-normable quasi-Banach space. In fact, by the Aoki-Rolewicz Theorem  \cite{Aok42},
\cite{Rol57}, any quasi-Banach space $X$ may be equivalently renormed
so it is $r$-normable for some $r\in (0,1]$, with $C=1$. It easily follows that every quasi-Banach space is a (not necessarily locally convex) $F$-space. Whenever we speak of an $r$-normable quasi-Banach space $X$ in this article, we implicitly assume $C=1$. Observe that if $X$ is a $r$-normable quasi-Banach space and $x,y\in X$ then
\begin{align}\label{reverseT}
| \n x\n^{r} - \n y\n^{r} | &\leq \n x -y \n^{r},
\end{align}
and hence the map $x\rightarrow \n x \n$ is continuous and therefore Borel
measurable.\par

Recall that an $X$-valued random variable is a Borel measurable mapping from $\O$ into $X$ with separable range (see \cite[Section I.1.4]{VaTaCho}). We say that an $X$-valued random variable $\xi$ on the probability space $(\Omega,\mathcal{A},\P)$ with $\G\subseteq \mathcal{A}$ a $\sigma$-algebra is \emph{$\G$-conditionally symmetric} if for all $B\in \mathcal{B}(X)$ one has $\P(\xi\in B \,|\, \G)=\P(-\xi\in B \,|\, \G)$. We sometimes omit the $\sigma$-algebra if it is obvious from the context.\par
Also recall the following notation: if $(\zeta_i)_{i\in I}$ is a set of $X$-valued random
variables indexed by an ordered set $I$, then $\zeta_i^*=\sup_{j\leq i} \Vert \zeta_j \Vert$ and
$\zeta^* = \sup_{j\in I} \Vert \zeta_j\Vert$. \par
Let $(\O, {\mathcal A}, \P)$ be a complete probability space. We recall some
probabilistic lemmas to be used later on. Since we need them in the
quasi-Banach setting, we provide the short proofs which might be well-known to
experts. The following version of L\'evy's inequality holds in quasi-Banach spaces:

\begin{lemma}\label{l:levy}
Let $X$ be an $r$-normable quasi-Banach space. Let $\mathcal{G}\subseteq \mathcal{A}$ be a sub-$\sigma$-algebra. Let $(\xi_k)_{k=1}^{n}$ be a sequence of $\mathcal{G}$-conditionally independent and $\mathcal{G}$-conditionally symmetric $X$-valued random variables. Then for all $t>0$ one
has:
\begin{align*}
\P\Big(\max_{k=1,\ldots,n}\Big\n \sum_{j=1}^{k}\xi_j \Big\n > t\,\Big|\,\mathcal{G}\Big) &\leq 2
\P\Big(\Big\n\sum_{j=1}^{n}\xi_j \Big\n > 2^{1-\inv{r}}t\,\Big|\,\mathcal{G}\Big)
\end{align*}
and
\begin{align*}
\P\Big(\max_{k=1,\ldots,n}\big\n \xi_j \big\n > t\,\Big|\,\mathcal{G}\Big) &\leq 2
\P\Big(\Big\n\sum_{j=1}^{n}\xi_j \Big\n > 2^{1-\inv{r}}t\,\Big|\,\mathcal{G}\Big).
\end{align*}
\end{lemma}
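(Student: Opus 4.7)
The plan is to adapt the classical reflection proof of L\'evy's maximal inequality to the quasi-Banach setting, with the single adjustment that the triangle inequality is replaced by the $r$-norm inequality. Throughout, set $S_k := \sum_{j=1}^{k}\xi_j$.

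For the first inequality, I would first partition the event $\{\max_{k\leq n}\|S_k\|>t\}$ by the first exceedance time: let $A_k = \{\|S_1\|\leq t, \ldots, \|S_{k-1}\|\leq t, \|S_k\|>t\}$. Then the $A_k$ are disjoint and their union is the event of interest. For fixed $k$, define the reflected sum $T_n := S_k - (\xi_{k+1}+\cdots+\xi_n) = 2S_k - S_n$. Since, conditionally on $\mathcal{G}$, the $\xi_j$ are independent and each symmetric, the map $(\xi_1,\ldots,\xi_n)\mapsto (\xi_1,\ldots,\xi_k,-\xi_{k+1},\ldots,-\xi_n)$ preserves the joint conditional law given $\mathcal{G}$; since $A_k$ is a function of $\xi_1,\ldots,\xi_k$ only, this gives $\P(A_k\cap\{\|T_n\|>s\}\,|\,\mathcal{G}) = \P(A_k\cap\{\|S_n\|>s\}\,|\,\mathcal{G})$ for every $s>0$. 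Next, from $2S_k = S_n + T_n$ and $r$-normability,
\[
2^r\|S_k\|^r = \|S_n+T_n\|^r \leq \|S_n\|^r + \|T_n\|^r \leq 2\max(\|S_n\|^r,\|T_n\|^r),
\]
so that $\|S_k\| \leq 2^{1/r-1}\max(\|S_n\|,\|T_n\|)$. Hence on $A_k$ one has $\max(\|S_n\|,\|T_n\|) > 2^{1-1/r}t$, which yields
\[
\P(A_k\,|\,\mathcal{G}) \leq \P(A_k\cap\{\|S_n\|>2^{1-1/r}t\}\,|\,\mathcal{G}) + \P(A_k\cap\{\|T_n\|>2^{1-1/r}t\}\,|\,\mathcal{G}) = 2\P(A_k\cap\{\|S_n\|>2^{1-1/r}t\}\,|\,\mathcal{G}).
\]
Summing over $k$ and using disjointness of the $A_k$ gives the first bound.

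For the second inequality, I would use essentially the same scheme but with two reflections. Define $B_k = \{\|\xi_1\|\leq t,\ldots,\|\xi_{k-1}\|\leq t,\|\xi_k\|>t\}$, and introduce $T_n = S_k - (\xi_{k+1}+\cdots+\xi_n)$ and $U_n = S_{k-1} - \xi_k - (\xi_{k+1}+\cdots+\xi_n)$. Since $B_k$ depends on $\xi_1,\ldots,\xi_{k-1}$ and on $\|\xi_k\|$ (invariant under sign-flip of $\xi_k$), both sign-flip maps preserve $B_k$ and the conditional law given $\mathcal{G}$, giving the analogous equalities of conditional probabilities for $T_n$ and $U_n$. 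Then the identity $2\xi_k = T_n - U_n$ combined with $r$-normability gives $\|\xi_k\| \leq 2^{1/r-1}\max(\|T_n\|,\|U_n\|)$, so on $B_k$ one has $\max(\|T_n\|,\|U_n\|) > 2^{1-1/r}t$, and the same conditioning argument as before yields $\P(B_k\,|\,\mathcal{G}) \leq 2\P(B_k\cap\{\|S_n\|>2^{1-1/r}t\}\,|\,\mathcal{G})$. Summing over $k$ completes the proof.

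The only real subtlety beyond the classical proof is the quasi-norm step, where the classical identity $\|S_k\|\leq \max(\|S_n\|,\|T_n\|)$ degrades to the weaker $\|S_k\|\leq 2^{1/r-1}\max(\|S_n\|,\|T_n\|)$; this is precisely what produces the factor $2^{1-1/r}$ on the right-hand side and reduces to $1$ in the Banach case $r=1$. The conditioning on $\mathcal{G}$ rather than on trivial $\sigma$-algebra requires no change of substance, because the $\mathcal{G}$-conditional law of $(\xi_1,\ldots,\xi_n)$ already contains the independence and symmetry needed to run the reflection.
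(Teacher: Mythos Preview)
Your proof is correct and follows essentially the same reflection argument as the paper: the same first-exceedance decomposition, the same reflected sum $T_n=S_n^{(k)}$, and the same $r$-norm replacement for the triangle inequality yielding the factor $2^{1-1/r}$. The only minor difference is that the paper makes the passage to conditional probabilities rigorous via a regular version of $\P(\xi\in\cdot\,|\,\mathcal{G})$ rather than arguing directly with the $\mathcal{G}$-conditional law, and the paper omits the details of the second inequality that you supply.
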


Let $\xi = (\xi_1, \ldots, \xi_n)$. For the proof note that there is a regular version $\mu:\O\times\mathcal{B}(X^n)\to [0,1]$ of $\P(\xi\in \cdot\,|\,\mathcal{G})$, with the following properties: for all $\omega\in \O$, $\mu(\omega,\cdot)$ is a probability measure on
$(X^n,\mathcal{B}(X^n))$ and for all $B\in \mathcal{B}(X^n)$ one has $\mu(\cdot,B)
=\P(\xi\in B\,|\,\mathcal{G})$ a.s.\ (see \cite[Theorems 6.3 and 6.4]{kallenberg}). Moreover, for any Borel function $\phi:X^n\to \R_+$ one has: \begin{equation*}
\int_{X^n} \phi(x) \, \mu(\omega,dx) = \E(\phi(\xi)\,|\, \mathcal{G})(\omega), \ \text{for almost all $\omega\in \O$},
\end{equation*}
whenever the latter exists. For the existence of the regular version note that $X^n$ is a separable complete metric space;  $\xi_j$ is separably valued for each $1\leq j \leq n$ and the metric is given by $d(x,y)=\n x-y \n^r$. The regular version $\mu$ of the conditional probability can be used to reduce the proof of the lemma to the case without conditional
probabilities. Indeed, let $\tilde{\xi}=(\tilde{\xi}_j)_{j=1}^n:X^n\to X^n$ be given by $\tilde{\xi}(x) = x$. Then one can argue with the random variable $\tilde{\xi}$ and probability measure $\mu(\omega,\cdot)$ on $X^n$ with $\omega\in \O$ fixed. We use this method below.
\begin{proof}
We only give a proof for the first statement, which is a modification of the
proof given both in \cite[Theorem 1.1.1]{delaPG} and in \cite[Proposition
1.1.1]{KwWo}. These monographs also provide a proof of the second statement
which is very similar to that of the first.\par As explained before the lemma we can leave out the conditional probabilities. For $k=1,\ldots,n$ define $S_k=\sum_{j=1}^{k}\xi_j$ and
$$A_k=\{\n S_j \n \leq t \textrm{ for all } j=1,\ldots,k-1; \n S_k \n>t \}.$$
Note that the sets $A_k$, $k=1,\ldots,n,$ are mutually disjoint. Define
$S_n^{(k)}:=S_k-\xi_{k+1}-\ldots -\xi_n$. Observe that by symmetry and
independence of the random variables $(\xi_k)_{k=1}^{n}$ the random variables
$S_n$ and $S_n^{(k)}$ have the same conditional distribution with respect to
$\sigma\big(\bigcup_{j=1}^{k}\xi_j\big)$. Hence
\begin{align*}
\P(A_k\cap \{\n S_n \n > 2^{1-\inv{r}}t\})&=\P(A_k\cap \{\n S^{(k)}_n \n >
2^{1-\inv{r}}t\}).
\end{align*}
On the other hand, because for any $x,y\in X$ one has $\n x \n \leq
2^{\inv{r}-1}\max\{\n x+y\n,\n x-y\n\}$,
on the set $A_k$ one has $t<\n S_k\n \leq 2^{\inv{r}-1}\max\{\n S_n\n,\n
S_n^{(k)}\n\}$ and thus
\begin{align*}
A_k=(A_k\cap \{\n S_n \n > 2^{1-\inv{r}}t\}) \cup (A_k\cap \{\n S^{(k)}_n \n >
2^{1-\inv{r}}t\}).
\end{align*}
Therefore
\begin{align*}
\P(S_n^*>t)&= \sum_{k=1}^{n}\P(A_k) \leq 2\sum_{k=1}^{n}\P(A_k\cap \{\n S_n \n > 2^{1-\inv{r}}t\})
\\ &= 2\P\big(\bigcup_{k=1}^{n}A_k \cap \{\n S_n \n > 2^{1-\inv{r}}t\}\big) \leq 2\P(\n S_n \n > 2^{1-\inv{r}}t).
\end{align*}
\end{proof}

As a consequence we obtain the following peculiar result which we need twice below. It is a ``toy"-version of the Kahane contraction principle.
\begin{corol}\label{cor:kahanecontractionalmost}
Assume the conditions of Lemma \ref{l:levy} hold. Let $(v_j)_{j=1}^n$ be a $\{0,1\}$-valued sequence of random variables such that $(v_j \xi_j)_{j=1}^n$ is again $\mathcal{G}$-conditionally independent and $\mathcal{G}$-conditionally symmetric. Then for all $t\geq 0$
one has:
\[
\P\Big(\Big\n \sum_{j=1}^{n} v_j \xi_j \Big\n > t\,\Big|\,\mathcal{G}\Big) \leq 2
\P\Big(\Big\n\sum_{j=1}^{n}\xi_j \Big\n > 2^{1-\inv{r}}t\,\Big|\,\mathcal{G}\Big).
\]
\end{corol}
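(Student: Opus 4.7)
The plan is to perform a Rademacher symmetrization of both sides of the inequality, after which the problem reduces to a reflection argument that exploits the $\{0,1\}$-valued nature of the $v_j$ together with $r$-normability.

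First I would enlarge $(\Omega, \mathcal{A}, \P)$ to carry an i.i.d.\ Rademacher sequence $(\eps_j)_{j=1}^n$ independent of $\mathcal{A}$. Because $(v_j \xi_j)_{j=1}^n$ is $\mathcal{G}$-conditionally independent and $\mathcal{G}$-conditionally symmetric, the joint $\mathcal{G}$-conditional laws of $(v_j \xi_j)_{j=1}^n$ and $(\eps_j v_j \xi_j)_{j=1}^n$ coincide; the same argument applied to $(\xi_j)_{j=1}^n$ (which satisfies the hypotheses of Lemma \ref{l:levy}) yields that $(\xi_j)_{j=1}^n$ and $(\eps_j \xi_j)_{j=1}^n$ share their joint $\mathcal{G}$-conditional law. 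This reduces the claim to
\[
\P\Big(\Big\n\sum_{j=1}^n \eps_j v_j \xi_j\Big\n > t \,\Big|\, \mathcal{G}\Big) \leq 2\, \P\Big(\Big\n\sum_{j=1}^n \eps_j \xi_j\Big\n > 2^{1 - 1/r} t \,\Big|\, \mathcal{G}\Big).
\]

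Next I would condition on $\mathcal{H} := \sigma(\mathcal{G}, v_1, \ldots, v_n, \xi_1, \ldots, \xi_n)$, under which the $v_j$ and $\xi_j$ are measurable while $(\eps_j)_j$ remains i.i.d.\ Rademacher and independent of $\mathcal{H}$. Set $Y := \sum_j \eps_j v_j \xi_j$ and $Z := \sum_j \eps_j (1 - v_j) \xi_j$, so that $Y + Z = \sum_j \eps_j \xi_j$. Because $v_j \in \{0, 1\}$, the sums $Y$ and $Z$ are $\mathcal{H}$-measurable functions of disjoint subcollections of the Rademachers, hence $\mathcal{H}$-conditionally independent. The $\mathcal{H}$-conditional symmetry of $Y$ combined with this independence upgrades to the equality in $\mathcal{H}$-conditional distribution of $(Y, Z)$ and $(-Y, Z)$, from which $\n Y + Z \n$ and $\n Y - Z \n$ share the same $\mathcal{H}$-conditional law.

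The endgame is the $r$-normability estimate $\n 2 Y \n^{r} = \n (Y+Z) + (Y-Z) \n^{r} \leq \n Y + Z \n^{r} + \n Y - Z \n^{r} \leq 2 \max(\n Y + Z \n^{r}, \n Y - Z \n^{r})$, which gives the pointwise bound $\n Y \n \leq 2^{1/r - 1} \max(\n Y + Z \n, \n Y - Z \n)$. Hence
\[
\P(\n Y \n > t \mid \mathcal{H}) \leq \P(\n Y + Z \n > 2^{1-1/r} t \mid \mathcal{H}) + \P(\n Y - Z \n > 2^{1-1/r} t \mid \mathcal{H}) = 2\, \P(\n Y + Z \n > 2^{1-1/r} t \mid \mathcal{H}),
\]
and taking $\E[\,\cdot\, \mid \mathcal{G}]$ while invoking the two distributional reductions from the first paragraph finishes the proof. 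I expect the symmetrization step to be the most delicate point: one must check that both $(v_j \xi_j)_j$ and $(\xi_j)_j$ can be simultaneously randomized with the same Rademachers without altering their $\mathcal{G}$-conditional laws, and this is exactly what the combination of the corollary's hypothesis and the standing assumptions inherited from Lemma \ref{l:levy} guarantees.
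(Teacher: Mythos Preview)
Your proof is correct and follows essentially the same route as the paper: both arguments symmetrize with an independent Rademacher sequence and then exploit that, once the $v_j$ and $\xi_j$ are frozen, the sum $\sum_j \eps_j v_j \xi_j$ is a ``sub-sum'' of $\sum_j \eps_j \xi_j$. The only cosmetic difference is that the paper packages the reflection step as a direct application of the L\'evy inequality (Lemma~\ref{l:levy}) to the sequence $(\eps_j v_j(\omega)\xi_j(\omega))_{j=1}^n$ with $\omega$ fixed, whereas you unfold that same reflection argument by hand via the decomposition $Y+Z$, $Y-Z$; the resulting constants and logic are identical.
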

Nigel Kalton kindly showed us how to obtain a Kahane contraction principle for tail probabilities for $r$-normable quasi-Banach space. However, the standard convexity proof for $r=1$ (cf.\ \cite[Corollary 1.2.]{KwWo}) does not extend to the case $r<1$, and the constants are more complicated. Since we do not need the more general version we only consider the situation of Corollary \ref{cor:kahanecontractionalmost}.
\begin{proof}
As in Lemma \ref{l:levy}, using a regular conditional probability for the $X^{2n}$-valued random variable
$\big((v_j \xi_j)_{j=1}^n, (\xi_j)_{j=1}^n\big)$, one can reduce to the case without conditional probabilities.

Let $(\eps_k)_{k\geq 1}$ be a Rademacher sequence on an independent complete probability space, where $\E_{\eps}$ and $\P_{\eps}$ denote the
the expectation and probability measure with respect to the Rademacher sequence. We obtain:
\begin{align*}
\P\Big(\Big\n\sum_{j=1}^{n} v_j \xi_j \Big\n>t \Big)
& \stackrel{(i)}{=} \E_{\eps} \E \one_{\{\n \sum_{j=1}^{n} \epsilon_j v_j \xi_j \n>t\}}
= \E \P_{\epsilon}\Big(\Big\n \sum_{j=1}^{n} \eps_j v_j \xi_j \Big\n>t \Big)
\\ & \stackrel{(ii)}{\leq}  2 \E \P_{\epsilon}\Big(\Big\n \sum_{j=1}^{n} \epsilon_j \xi_j \Big\n>2^{1-\inv{r}} t \Big)
\stackrel{(iii)}{=} 2\P\Big(\Big\n\sum_{j=1}^{n} \xi_j \Big\n>2^{1-\inv{r}} t \Big),
\end{align*}
where $\one$ denotes the indicator function.
In (i) and (iii) we used the independence and symmetry of $(v_j\xi_j)_{j=1}^n$ and of $(\xi_j)_{j=1}^n$. In (ii) we applied Lemma \ref{cor:kahanecontractionalmost}
to the random variables $(\epsilon_j v_j(\omega) \xi_j(\omega))_{j=1}^n$ where $\omega\in \O$ is fixed
and we used that $v_j \in \{0,1\}$.
\end{proof}

Recall that for $a,b\geq 0$ and $p\geq 1$ one has:
\begin{align*}
a^p+b^p & \leq (a+b)^p \leq 2^{p-1}(a^p+b^p),
\end{align*}
the latter inequality following by convexity. For $0<p\leq 1$ the reversed
inequalities hold, hence by defining
\begin{align}\label{ul}
l_p:= 2^{1-p}\maxsym 1 \quad \textrm{and} \quad u_p:=2^{p-1}\maxsym 1, \ \
p\in (0,\infty),
\end{align}
we obtain the following general statement for $p\in (0,\infty)$ and $a,b\geq
0$:
\begin{align}\label{pest}
l^{-1}_p(a^p+b^p) & \leq (a+b)^p \leq u_p(a^p+b^p).
\end{align}
Note that $2^{1-p}u_p=l_p$. A tiny yet useful Lemma:
\begin{lemma}\label{l:symsum}
Let $X$ be an $r$-normable quasi-Banach space and let $\mathcal{G}\subseteq \mathcal{A}$ be a sub-$\sigma$-algebra. Let $\xi$ and $\zeta$ be $\mathcal{G}$-conditionally independent
$X$-valued random variables. If $\zeta$ is $\mathcal{G}$-conditionally symmetric, then for all $p\in
(0,\infty)$ one has:
\begin{align*}
\E[\n \xi \n^p\,|\,\mathcal{G}] \leq 2^{1-p}u_{p/r}\E[\n \xi+\zeta \n^p\,|\,\mathcal{G}],
\end{align*}
where $u_{p/r}$ is as defined in \eqref{ul}.
\end{lemma}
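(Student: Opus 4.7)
The plan is to compare $\xi$ with both $\xi+\zeta$ and $\xi-\zeta$ using $r$-normability, and then to exploit the fact that conditional symmetry of $\zeta$ together with conditional independence of $\xi$ and $\zeta$ forces $\xi+\zeta$ and $\xi-\zeta$ to have the same conditional law given $\mathcal{G}$.

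First, I would establish the deterministic pointwise inequality. Since $2x=(x+y)+(x-y)$ and $X$ is $r$-normable with constant $1$, homogeneity of the quasi-norm gives
\begin{align*}
2^{r}\n x\n^{r} = \n 2x\n^{r} \leq \n x+y\n^{r} + \n x-y\n^{r}.
\end{align*}
Raising to the power $p/r$ and applying \eqref{pest} with exponent $p/r$ yields
\begin{align*}
2^{p}\n x\n^{p} \leq \bigl(\n x+y\n^{r} + \n x-y\n^{r}\bigr)^{p/r} \leq u_{p/r}\bigl(\n x+y\n^{p} + \n x-y\n^{p}\bigr).
\end{align*}
Substituting $x=\xi$, $y=\zeta$ and taking conditional expectations with respect to $\mathcal{G}$ gives
\begin{align*}
\E[\n\xi\n^{p}\,|\,\mathcal{G}] \leq 2^{-p}u_{p/r}\bigl(\E[\n\xi+\zeta\n^{p}\,|\,\mathcal{G}] + \E[\n\xi-\zeta\n^{p}\,|\,\mathcal{G}]\bigr).
\end{align*}

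Second, I would verify that the two terms on the right are equal. Passing to a regular version of the conditional distribution of $(\xi,\zeta)$ given $\mathcal{G}$ (which exists because $X\times X$ is a separable complete metric space under $d((x_1,y_1),(x_2,y_2))=\n x_1-x_2\n^{r}+\n y_1-y_2\n^{r}$), conditional independence means this regular version is, $\P$-a.s., a product measure $\mu_{\xi}(\omega,\cdot)\otimes\mu_{\zeta}(\omega,\cdot)$. Conditional symmetry of $\zeta$ gives $\mu_{\zeta}(\omega,\cdot)=\mu_{-\zeta}(\omega,\cdot)$ for almost every $\omega$, so $(\xi,\zeta)$ and $(\xi,-\zeta)$ have the same conditional law given $\mathcal{G}$. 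Applying this to the Borel function $(x,y)\mapsto \n x+y\n^{p}$ (which is measurable by \eqref{reverseT}) yields
\begin{align*}
\E[\n\xi-\zeta\n^{p}\,|\,\mathcal{G}] = \E[\n\xi+\zeta\n^{p}\,|\,\mathcal{G}] \quad \text{a.s.}
\end{align*}
Combining with the previous display gives the claimed bound $2^{1-p}u_{p/r}\E[\n\xi+\zeta\n^{p}\,|\,\mathcal{G}]$.

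There is essentially no obstacle here beyond bookkeeping of constants: the only subtlety is the measure-theoretic justification that conditional independence plus conditional symmetry of one summand really implies equality of the conditional laws of $\xi\pm\zeta$, which is handled by the standard regular-conditional-distribution argument already invoked in the discussion following Lemma \ref{l:levy}.
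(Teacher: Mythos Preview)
Your proof is correct and follows essentially the same route as the paper: the pointwise bound $2^{p}\|x\|^{p}\le u_{p/r}(\|x+y\|^{p}+\|x-y\|^{p})$ from $r$-normability and \eqref{pest}, combined with the fact that $\xi+\zeta$ and $\xi-\zeta$ have the same (conditional) law. The only cosmetic difference is that the paper first reduces to the unconditional case via a regular conditional probability and then argues with ordinary expectations, whereas you keep the conditional expectations throughout and invoke the regular conditional distribution only to justify the equality $\E[\|\xi+\zeta\|^{p}\,|\,\mathcal{G}]=\E[\|\xi-\zeta\|^{p}\,|\,\mathcal{G}]$; either way the argument is identical in substance.
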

\begin{proof}
As in Lemma \ref{l:levy} it suffices to prove the estimate without conditional expectations.

Because $\xi$ and $\zeta$ are independent and $\zeta$ is symmetric, $\xi+\zeta$ and $\xi-\zeta$ are
identically distributed. By \eqref{pest} one has:
\begin{align*}
\E \n \xi \n^{p} &\leq 2^{-p}\E (\n \xi +\zeta\n^r +\n \xi-\zeta \n^r)^{\frac{p}{r}} \\
& \leq 2^{-p}u_{p/r}\E(\n \xi +\zeta\n^p +\n \xi-\zeta \n^p)
 = 2^{1-p}u_{p/r}\E\n \xi + \zeta\n^p.
\end{align*}
\end{proof}
From \cite[p.\ 161]{LeTa} we adapt to the quasi-Banach space setting a reverse
Kolmogorov inequality:
\begin{lemma}\label{l:revKol}
Let $X$ be an $r$-normable quasi-Banach space and let $p\in (0,\infty)$. Let
$(\xi_k)_{k=1}^{n}$ be a sequence of $\mathcal{G}$-conditionally independent and $\mathcal{G}$-conditionally symmetric $X$-valued random
variables. Then for all $t>0$ one has:
\begin{align*}
\P\Big(\max_{k=1,\ldots,n}\Big\n\sum_{j=1}^{k}\xi_j \Big\n > t\,\Big|\,\mathcal{G}\Big)&\geq
2^{p-1}\left[ u^{-2}_{p/r}-\frac{t^p+\E (|\xi^*|^p\,|\,\mathcal{G})}{\E (\n \sum_{j=1}^{n}\xi_j
\n^p\,|\, \mathcal{G})}\right].
\end{align*}
\end{lemma}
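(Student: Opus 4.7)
The plan is to mimic the regular-conditional-probability reduction used in the proofs of Lemmas \ref{l:levy} and \ref{l:symsum}, thereby reducing to the unconditional analogue of the claim. Setting $S_k := \sum_{j=1}^k \xi_j$ and $A := \{S_n^* > t\}$, I would then introduce the stopping time $\tau := \inf\{k \leq n : \n S_k\n > t\}$ (with $\tau = \infty$ off $A$), so that $A = \{\tau \leq n\}$ and $S_n - S_{\tau \wedge n}$ vanishes on $A^c$. On $A$, the $r$-triangle inequality gives $\n S_n\n^r \leq \n S_{\tau-1}\n^r + \n \xi_\tau\n^r + \n S_n - S_\tau\n^r \leq t^r + |\xi^*|^r + \n S_n - S_{\tau\wedge n}\n^r$, and this bound holds trivially on $A^c$ as well (since there $\n S_n\n \leq t$ and the last term vanishes). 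Raising to the $(p/r)$-th power and applying \eqref{pest} twice produces the uniform estimate
\[
\n S_n\n^p \leq u_{p/r}^2\bigl(t^p + |\xi^*|^p\bigr) + u_{p/r}\,\n S_n - S_{\tau\wedge n}\n^p.
\]

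Next I would exploit a reflection argument. The $\mathcal{G}$-conditional symmetry and independence of the $\xi_j$ imply that reversing the signs of all $\xi_j$ with $j > \tau \wedge n$ produces a sequence with the same joint $\mathcal{G}$-conditional law, so the reflected sum $\tilde S_n := 2 S_{\tau \wedge n} - S_n$ has the same $\mathcal{F}_{\tau \wedge n}$-conditional distribution as $S_n$. Writing $\n S_n - S_{\tau \wedge n}\n = \tfrac{1}{2}\n S_n - \tilde S_n\n$, noting that this quantity vanishes on $A^c$, and invoking the $r$-triangle inequality together with \eqref{pest}, delivers
\[
\E\bigl[\n S_n - S_{\tau\wedge n}\n^p\,\big|\,\mathcal{G}\bigr] \leq 2^{1-p} u_{p/r}\,\E\bigl[\n S_n\n^p\,\one_A\,\big|\,\mathcal{G}\bigr],
\]
where the appearance of the indicator $\one_A$ on the right-hand side, obtained from the vanishing of the left-hand side off $A$ together with the $\mathcal{F}_{\tau \wedge n}$-measurability of $A$, is the crucial structural feature.

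Substituting this reflection bound into the conditional expectation of the pathwise estimate yields
\[
\E\bigl[\n S_n\n^p\,\big|\,\mathcal{G}\bigr] \leq u_{p/r}^2\bigl(t^p + \E[|\xi^*|^p\mid\mathcal{G}]\bigr) + 2^{1-p} u_{p/r}^2\,\E\bigl[\n S_n\n^p\,\one_A\,\big|\,\mathcal{G}\bigr].
\]
The last step, rearranging this linear inequality and converting the resulting bound on $\E[\n S_n\n^p\one_A \mid \mathcal{G}]$ into the desired lower bound on $\P(S_n^* > t \mid \mathcal{G})$, is the main delicate point of the proof. I expect to handle it by carefully accounting for the contributions to $\E[\n S_n\n^p\mid\mathcal{G}]$ coming from $A$ and $A^c$ (using $\n S_n\n \leq t$ on $A^c$) and then solving the resulting linear inequality for $\P(A\mid\mathcal{G})$, from which the constants $2^{p-1}$ and $u_{p/r}^{-2}$ appearing in the statement of the lemma emerge naturally.
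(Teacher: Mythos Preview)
Your reduction to the unconditional case, the choice of stopping time, and the pathwise estimate
\[
\n S_n\n^p \leq u_{p/r}^2\bigl(t^p + |\xi^*|^p\bigr) + u_{p/r}\,\n S_n - S_{\tau\wedge n}\n^p
\]
are all fine and essentially match the paper's argument. The gap is in the last two steps.

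Your reflection argument correctly gives
\[
\E\bigl[\n S_n - S_{\tau\wedge n}\n^p\bigr] \leq 2^{1-p} u_{p/r}\,\E\bigl[\n S_n\n^p\,\one_A\bigr],
\]
but this produces the factor $\E[\n S_n\n^p\one_A]$ rather than the factor $\E[\n S_n\n^p]\,\P(A)$ that is actually needed. After substitution you obtain an inequality relating $\E[\n S_n\n^p]$ and $\E[\n S_n\n^p\one_A]$, and from this there is no way to extract a lower bound on $\P(A)$: the ratio $\E[\n S_n\n^p\one_A]/\E[\n S_n\n^p]$ is not comparable to $\P(A)$ without an a priori $L^\infty$ bound on $S_n$, which you do not have. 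The ``accounting for $A$ and $A^c$'' idea does not help either, since writing $\E[\n S_n\n^p] = \E[\n S_n\n^p\one_A] + \E[\n S_n\n^p\one_{A^c}]$ and bounding the second term by $t^p\P(A^c)$ still leaves you with an inequality in which $\P(A)$ and $\E[\n S_n\n^p\one_A]$ appear as separate unknowns.

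The paper avoids this by reversing the order of the two ingredients you use. Instead of reflecting at the random time $\tau$, it decomposes
\[
\E\bigl[\n S_n - S_{\tau\wedge n}\n^p\bigr] = \sum_{k=1}^{n}\int_{\{\tau=k\}}\n S_n - S_k\n^p\,d\P,
\]
observes that $S_n - S_k$ is independent of the $\F_k$-measurable event $\{\tau=k\}$, and hence factors each summand as $\E[\n S_n - S_k\n^p]\,\P(\tau=k)$. Summing in $k$ produces the crucial factor $\P(A) = \P(S_n^*>t)$ explicitly. Only then is the reflection argument (in the form of Lemma~\ref{l:symsum}, applied at each \emph{fixed} $k$) invoked to bound $\sup_k \E[\n S_n - S_k\n^p] \leq 2^{1-p}u_{p/r}\E[\n S_n\n^p]$. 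This order --- independence first to extract $\P(A)$, reflection second to close the estimate --- is what makes the final rearrangement go through.
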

In particular, if $r=1$ this corresponds to the result as stated in \cite[p.\ 161]{LeTa}.

\begin{proof}
As in the last two lemmas it suffices to consider the situation without conditioning.
Set $S_k=\sum_{j=1}^{k}\xi_j$ ($k=1,\ldots,n$), $S_0 = 0$, and define the stopping time
\begin{align*}
\tau &:=\inf\{k:\n S_k \n > t \}.
\end{align*}
On the set $\{\tau=k\}$ one has by applying \eqref{pest} twice:
\begin{align*}
\n S_n \n^{p} & \leq u_{p/r} (u_{p/r}[\n S_{k-1}\n^{p} + \n \xi_k \n^{p}] + \n S_n - S_k \n^p)\\
& \leq u_{p/r} (u_{p/r}[t^{p} + (\xi_n^*)^{p}] + \n S_n - S_k \n^p).
\end{align*}
Hence
\begin{align*}
\E \n S_n \n^{p} &\leq t^p\P(S_n^*\leq t) + \sum_{k=1}^{n}\int_{\{\tau=k\}}\n S_n \n^p d\P \\
&\leq t^p \P(S_n^*\leq t)+ u_{p/r}\sum_{k=1}^{n}\int_{\{\tau=k\}}
(u_{p/r}[t^{p} + (\xi_n^*)^{p}] + \n S_n - S_k \n^p) d\P.
\end{align*}
Because $S_n - S_k$ is independent of $\{\tau = k\}$ and
$\sum_{k=1}^{n}\P(\tau=k)=\P(S_n^*> t)$ the above can be estimated by:
\begin{align*}
\E \n S_n \n^{p} &\leq t^p \P(S_n^*\leq t)+ u_{p/r}^2\big[t^{p}\P(S_n^*>
t) + \E(\xi_n^*)^{p} \big]\\
& \quad + u_{p/r} \sup_{1\leq k\leq n}\E\n S_n - S_k
\n^p\P(S_n^*> t).
\end{align*}
By Lemma \ref{l:symsum} we have $\E\n S_n - S_k \n^p\leq 2^{1-p}u_{p/r}\E\n
S_n \n^p$ and thus, observing that $u_{p/r}\geq 1$,
\begin{align*}
\E \n S_n \n^{p} &\leq u^2_{p/r}\big[t^{p} + \E(\xi_n^*)^{p} + 2^{1-p}\E\n S_n
\n^p\P(S_n^*> t)\big],
\end{align*}
from which the desired estimate follows.
\end{proof}

The next lemma relates the distribution of $e^*$ and $d^*$ if $(e_n)_{n\geq
1}$ is a decoupled tangent sequence of $(d_n)_{n\geq 1}$ (see \cite[Lemma
1]{Hitczenko_CompM} or \cite[Theorem 5.2.1]{KwWo}):
\begin{lemma}\label{l:tanseqP}
Let $X$ be an $r$-normable quasi-Banach space and let $(e_n)_{n\geq 1}$ be a decoupled tangent sequence of $(d_n)_{n\geq 1}$. Then for each $t>0$ one has:
\begin{align*}
\mathbb{P}(e^*> t) &\leq 2\mathbb{P}(d^*> t) &\textrm{and}&& \mathbb{P}(d^*> t) &\leq 2\mathbb{P}(e^*> t).
\end{align*}
\end{lemma}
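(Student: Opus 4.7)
The plan is to reduce the lemma to a statement about binary-valued sequences sharing a common predictable intensity, and then exploit that intensity to compare both tail probabilities to a single auxiliary quantity. Set $D_n := \one_{\n d_n\n > t}$ and $E_n := \one_{\n e_n\n > t}$; then $\{d^* > t\} = \bigcup_n \{D_n = 1\}$ and similarly for $e$. The tangent-sequence hypothesis, combined with the fact that $\mathcal{L}(\n d_n\n\,|\,\F_{n-1}) = \mathcal{L}(\n e_n\n\,|\,\F_\infty)$, ensures that
\[ p_n := \P(D_n = 1\,|\,\F_{n-1}) = \P(E_n = 1\,|\,\F_\infty) \]
is well-defined and $\F_{n-1}$-measurable, and is shared by both sequences.

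For the $e$-side, I would use the $\F_\infty$-conditional independence of $(E_n)_{n\geq 1}$ to obtain the explicit formula
\[ \P(e^* > t) = \E\Big[1 - \prod_{n\geq 1}(1 - p_n)\Big]. \]
For the $d$-side, I would introduce $\tau := \inf\{n : D_n = 1\}$ and $A_n := \sum_{k \leq n} p_k$, and apply optional stopping to the uniformly bounded martingale $\sum_{k\leq n}(D_k - p_k)$ (at $\tau \wedge N$, followed by $N \to \infty$ and monotone convergence), to obtain
\[ \P(d^* > t) = \P(\tau < \infty) = \E[A_\tau], \]
with the convention $A_\tau = A_\infty$ on $\{\tau = \infty\}$.

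The heart of the proof is then to sandwich both $\E[A_\tau]$ and $\E[1 - \prod(1-p_n)]$ between multiples of $\E[\min(1, A_\infty)]$. The crucial elementary estimate is
\[ \tfrac{1}{2}\min\bigl(1, \textstyle\sum_k p_k\bigr) \leq 1 - \textstyle\prod_k(1 - p_k) \leq \min\bigl(1, \textstyle\sum_k p_k\bigr), \]
whose lower bound follows from $\prod(1-p_k) \leq e^{-\sum p_k}$ together with $1 - e^{-x} \geq \min(x,1)/2$ for $x \geq 0$. For the upper bound on $\E[A_\tau]$ I would use the predictable stopping time $\sigma := \inf\{n : A_n \geq 1\}$ to split $\{\tau < \infty\} \subseteq \{S_{\sigma - 1} \geq 1\} \cup \{\sigma < \infty\}$; Doob's weak-$L^1$ inequality on the stopped martingale gives $\P(S_{\sigma - 1} \geq 1) \leq \E[A_{\sigma - 1}] \leq \E[\min(1, A_\infty)]$, while $\P(\sigma < \infty) = \P(A_\infty \geq 1) \leq \E[\min(1, A_\infty)]$ is immediate. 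The reverse inequality is proved by the symmetric argument: bound $\P(e^*>t)$ above using the right-hand estimate, and bound $\P(d^* > t) = \E[A_\tau]$ below by a matching Doob-type argument on $(S_n)$ stopped at $\sigma$.

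The main obstacle is extracting the sharp numerical constant $2$. The elementary sandwich above, combined as stated, only yields an inequality with some slightly larger integer constant. Sharpening to exactly $2$ requires a careful refinement of the threshold in $\sigma$, together with the observation that the overshoot $A_\sigma - A_{\sigma - 1} = p_\sigma$ is bounded by $1$ (since $p_n \in [0,1]$), so the excess of $A$ past the threshold is controlled. This tightening is the only delicate step; the rest of the argument is routine once the binary reduction and the formula $\P(d^* > t) = \E[A_\tau]$ are in place.
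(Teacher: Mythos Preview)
Your reduction to the indicator sequences $D_n,E_n$ with common predictable intensity $p_n$ is correct, and in fact the paper's own argument is precisely this reduction: since $(\|e_n\|)_{n\ge1}$ is a decoupled tangent sequence of $(\|d_n\|)_{n\ge1}$, the lemma is immediately the scalar result of Hitczenko and Kwapie\'n--Woyczy\'nski, and the paper simply cites those references.

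Your route after the reduction, however, diverges from the standard one and does not reach the constant~$2$ in both directions. The argument in the cited references never passes through the auxiliary quantity $\E[\min(1,A_\infty)]$; instead it exploits only the \emph{tangency} $\P(D_n=1\mid\G_{n-1})=\P(E_n=1\mid\G_{n-1})$ with respect to the joint filtration $\G_n=\sigma(D_1,E_1,\dots,D_n,E_n)$. Setting $\sigma=\tau_d\wedge\tau_e$ one has $\{\sigma\ge n\}\in\G_{n-1}$, whence $\P(\sigma<\infty,\,D_\sigma=1)=\P(\sigma<\infty,\,E_\sigma=1)$; splitting $\{\tau_e<\infty\}$ according to whether $\tau_e\le\tau_d$ or $\tau_d<\tau_e$ then gives $\P(e^*>t)\le 2\P(d^*>t)$ directly, and the reverse follows by the genuine symmetry of the tangency condition.

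Your approach does yield $\P(e^*>t)\le \E[\min(1,A_\infty)]\le 2\P(d^*>t)$ with the sharp constant --- the second inequality follows by splitting on $\{\tau<\infty\}$ and using your identity $\P(\tau<\infty)=\E[A_\tau]$. But in the other direction you are forced through the lower bound $1-\prod_k(1-p_k)\ge\tfrac12\min(1,A_\infty)$, and this factor $\tfrac12$ is essentially sharp (take $p_k=1/N$ for $k\le N$, giving ratio $1-e^{-1}$). Combined with the best bound $\P(\tau<\infty)\le 2\,\E[\min(1,A_\infty)]$ you get only $\P(d^*>t)\le 4\,\P(e^*>t)$. The ``overshoot'' observation $p_\sigma\le1$ that you invoke controls $A_\sigma-A_{\sigma-1}$ but does nothing to remove the loss coming from the product-versus-sum comparison; the gap you flag is therefore real and not removable along this route. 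Note also that your ``symmetric argument'' for the reverse inequality cannot be symmetric: the product formula is available for $e$ (conditional independence) but not for $d$. Finally, the symbol $S_n$ in your Doob-inequality step is never defined.
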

(The proof requires no adaptation; if $(e_n)_{n\geq 1}$ is a decoupled tangent sequence of $(d_n)_{n\geq 1}$
then the sequence $(\n e_n\n)_{n\geq 1}$ is a decoupled tangent sequence of $(\n d_n\n)_{n\geq
1}$.)\par

The following lemma is well-known to experts, but we could not find a reference.
\begin{lemma}\label{lem:preddcoupl}
Let $X$ be a complete separable metric space, and let $(S,\Sigma)$ be a measurable space. Suppose $(d_n)_{n\geq 1}$ is an $(\F_n)_{n\geq 1}$-adapted $X$-valued sequence and let $(v_n)_{n\geq 1}$ be an $(\F_n)_{n\geq 0}$-predictable $S$-valued sequence. For $n\geq1$ let $h_n:X\times S\rightarrow X$ be a $\mathcal{B}(X)\otimes \Sigma$-measurable function. Then $(h_n(e_n, v_n))_{n\geq 1}$ is a decoupled tangent sequence of $(h_n(d_{n},v_n))_{n\geq 1}$ whenever $(e_n)_{n\geq 1}$ is a decoupled tangent sequence of $(d_n)_{n\geq 1}$. \par
Moreover, if the function $h_n$ satisfies $-h_n(x,s)= h_n(-x,s)$ for all $x\in X, s\in S$ for some $n\geq 1$, then $h_n(d_{n},v_n)$ is $\F_{n-1}$-conditionally symmetric and $h_n(e_{n},v_n)$ is $\F_{\infty}$-conditionally symmetric whenever $d_n$ is.
\end{lemma}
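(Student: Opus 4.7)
The plan is to verify the two defining properties of a decoupled tangent sequence separately, using that $v_n$ is predictable so it becomes a constant once we condition appropriately. I will work throughout with regular conditional distributions.

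\textbf{Setting up regular versions.} Since $X$ is a complete separable metric space, there exist regular conditional distributions $\mu(\omega,\cdot)$ for $d_n$ given $\F_{n-1}$ and $\nu(\omega,\cdot)$ for $e_n$ given $\F_\infty$. The tangent sequence hypothesis $\P(d_n\in B\mid \F_{n-1})=\P(e_n\in B\mid \F_\infty)$ for every $B\in\mathcal{B}(X)$ upgrades, by a standard monotone-class argument on a countable generating algebra of $\mathcal{B}(X)$, to the statement $\mu(\omega,\cdot)=\nu(\omega,\cdot)$ as measures on $\mathcal{B}(X)$ for almost every $\omega$.

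\textbf{First identity (same conditional law).} Fix $B\in\mathcal{B}(X)$ and note that $v_n$ is $\F_{n-1}$-measurable, hence also $\F_\infty$-measurable. Since $h_n$ is jointly measurable, $x\mapsto \one_B(h_n(x,v_n(\omega)))$ is Borel for each $\omega$, and the map $(x,\omega)\mapsto \one_B(h_n(x,v_n(\omega)))$ is jointly measurable. Using the disintegration formula,
\begin{align*}
\P(h_n(d_n,v_n)\in B\mid \F_{n-1})(\omega) &= \int_X \one_B\bigl(h_n(x,v_n(\omega))\bigr)\,\mu(\omega,dx),\\
\P(h_n(e_n,v_n)\in B\mid \F_{\infty})(\omega) &= \int_X \one_B\bigl(h_n(x,v_n(\omega))\bigr)\,\nu(\omega,dx).
\end{align*}
Since $\mu=\nu$ almost surely, the right-hand sides agree almost surely, giving the first property.

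\textbf{Conditional independence.} Let $B_1,\dots,B_n\in\mathcal{B}(X)$. Since $v_1,\dots,v_n$ are all $\F_\infty$-measurable, the set $\{h_k(e_k,v_k)\in B_k\}$ equals $\{e_k\in A_k(\omega)\}$ with the random Borel set $A_k(\omega)=h_k(\cdot,v_k(\omega))^{-1}(B_k)$ that is $\F_\infty$-measurable in an appropriate sense. Using the regular conditional distribution of $(e_1,\dots,e_n)$ given $\F_\infty$, which factorises by $\F_\infty$-conditional independence of $(e_k)$, we get
\begin{align*}
\P\bigl(h_1(e_1,v_1)\in B_1,\dots,h_n(e_n,v_n)\in B_n\mid \F_\infty\bigr) &= \prod_{k=1}^n \P\bigl(e_k\in A_k\mid \F_\infty\bigr) \\
&= \prod_{k=1}^n \P\bigl(h_k(e_k,v_k)\in B_k\mid \F_\infty\bigr),
\end{align*}
which is the required conditional independence.

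\textbf{Symmetry.} Assume $h_n(-x,s)=-h_n(x,s)$ and that $d_n$ is $\F_{n-1}$-conditionally symmetric, i.e.\ $\mu(\omega,\cdot)=\mu(\omega,-\cdot)$ almost surely. Then for any $B\in\mathcal{B}(X)$,
\begin{align*}
\P(h_n(d_n,v_n)\in B\mid \F_{n-1})(\omega) &= \int_X \one_B\bigl(h_n(x,v_n(\omega))\bigr)\,\mu(\omega,dx)\\
&= \int_X \one_B\bigl(-h_n(x,v_n(\omega))\bigr)\,\mu(\omega,dx)\\
&= \P\bigl(-h_n(d_n,v_n)\in B\mid \F_{n-1}\bigr)(\omega),
\end{align*}
so $h_n(d_n,v_n)$ is $\F_{n-1}$-conditionally symmetric. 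Applying the first part of the lemma to obtain $\P(h_n(e_n,v_n)\in B\mid \F_\infty)=\P(h_n(d_n,v_n)\in B\mid \F_{n-1})$ and combining with the identity just proved yields the $\F_\infty$-conditional symmetry of $h_n(e_n,v_n)$.

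The only real subtlety is bookkeeping of null sets: the equality $\mu=\nu$ holds outside a $\P$-null set that can be chosen uniformly in $B$ by separability of $\mathcal{B}(X)$, and the same care must be taken when factorising the conditional distribution of $(e_k)_{k\leq n}$. Once one commits to regular conditional distributions these issues are standard and cause no trouble.
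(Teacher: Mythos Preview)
Your proof is correct and follows essentially the same route as the paper: both arguments use regular conditional distributions (available because $X$ is Polish) together with the disintegration formula, freezing the predictable variable $v_n$ at its value $v_n(\omega)$ once one conditions on $\F_{n-1}$ or $\F_\infty$. Your treatment of the symmetry clause is in fact more explicit than the paper's, which merely says it ``can be proven in a similar fashion.''
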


\begin{proof}[Proof of Lemma \ref{lem:preddcoupl}]
Fix $k\geq 1$. Let $\mu_1,\mu_2:\O\times \mathcal{B}(X)\to [0,1]$ be regular conditional probabilities for $\P(d_k\in \cdot\,|\, \F_{k-1})$ and $\P(e_k\in \cdot\,|\, \F_{\infty})=\P(e_k\in \cdot\,|\, \F_{k-1})$. Then by the fact that $(e_n)_{n\geq 1}$ is a decoupled tangent sequence of $d_n$ we have $\mu_1(\omega,\cdot) = \mu_2(\omega,\cdot)$ for almost all $\omega\in \O$. Let $\tilde{d}_k(x) = x$ and $\tilde{e}_k(x) = x$. Let $B\subseteq X$ be a Borel set. Then by disintegration (also see \cite[Theorems 6.3 and 6.4]{kallenberg}) for almost all $\omega\in \O$ one has:
\begin{align*}
\P(h_k(d_k, v_k)\in B\,|\,\mathcal{F}_{k-1})(\omega) &= \int_{X} \one_{h_k(\tilde{d}_k(x), v_k(\omega))\in B} \,  \mu_1(\omega, dx)
\\ & = \int_{X} \one_{h_k(\tilde{e}_k(x), v_k(\omega))\in B} \,  \mu_2(\omega, dx)
\\ & = \P(h_k(e_k, v_k)\in B\,|\,\mathcal{F}_{k-1})(\omega).
\end{align*}
The claim concerning the conditional symmetry of $h_n(d_{n}, v_n)$ and $h_n(e_{n}, v_n)$ can be proven in a similar fashion.

Therefore, it remains to prove the conditional independence. Fix $n\geq 1$. Let $\mu:\O\times \mathcal{B}(X^n)\to [0,1]$ be a regular conditional probability for $(e_k)_{k=1}^n$. Let $\tilde{e}:X^n\to X^n$ be given by $\tilde{e}(x) = x$. Then for each $\omega\in\O$,  $(\tilde{e}_k)_{k=1}^n$ are independent random variables with respect to the probability measure with respect to $\mu(\omega, \cdot)$. In this part of the argument we only require that $v_n$ is $\F_{\infty}$-measurable. By disintegration one obtains that for all Borel sets $B_1, \ldots, B_n\subseteq X$ and almost all $\omega\in \O$ one has:
\begin{align*}
&\P( h_1(e_1,v_1)\in B_1, \ldots, h_n(e_n, v_n)\in B_n\,|\,\F_{\infty})(\omega) \\
&\qquad\qquad =\int_{X^n} \prod_{k=1}^n \one_{h_k(\tilde{e}_k(x), v_n(\omega)))\in B_k} \,  \mu(\omega, dx)
\\ &\qquad\qquad = \prod_{k=1}^n \int_{X^n}  \one_{h_k(\tilde{e}_k(x), v_n(\omega)))\in B_k} \,  \mu(\omega, dx) \quad \textrm{(by independence)}
\\ &\qquad\qquad = \prod_{k=1}^n \P(h_k(e_k,v_n)\in B_k\,|\,\F_{\infty})(\omega).
\end{align*}
\end{proof}

\section{Extrapolation lemmas}\label{sec:preplemma}
Throughout this section let $X$ be a fixed $r$-normable quasi-Banach space, and let $(\Omega,(\F_n)_{n\geq 1},\mathcal{A},\P)$ be a fixed complete probability space. As usual we define $\F_{\infty}=\sigma(\F_n:n\geq 1)$.  Moreover, in this section and the next $(d_n)_{n\geq 1}$ and $(e_n)_{n\geq 1}$ always denote the respective difference sequences of the sequences $(f_n)_{n\geq 1}$ and $(g_n)_{n\geq 1}$.\par
Let $\mathcal{M}_{\infty}$ be the set of all $(\F_n)_{n\geq 1}$-adapted uniformly bounded $X$-valued sequences $f=(f_{n})_{n\geq 1}$ such that $d_n$ is $\F_{n-1}$-conditionally symmetric for all $n\geq 1$ and for which there exists an $N\in \mathbb{N}$ such that $d_n = 0$ for all $n\geq N$. We define
\begin{align*}
& \mathcal{D}_{\infty}:=\{f\in \mathcal{M}_{\infty}\,:\, \textrm{there exists a $\F_{\infty}$-decoupled sum sequence $g$ of $f$}\\
& \qquad\qquad\qquad\qquad\qquad \textrm{ on the space }(\O,\mathcal{A},\P)\},\end{align*}
(It would be more precise to refer to $\mathcal{D}_{\infty}$ as $\mathcal{D}_{\infty}(\Omega,(\F_n)_{n\geq 1},\mathcal{A},\P;X)$ but we have assumed the space $X$ and the probability space to be fixed throughout this section.)\label{Minf}\par
The operator $T_p:\mathcal{D}_{\infty}\rightarrow L^0(\Omega, \mathcal{A}, \mathbb{R}_{+})$ is defined as
follows:
\begin{align*}
T_p(f)&=\Big( \mathbb{E}\Big[ \Big\Vert \sum_{k\geq 1} e_k
\Big\Vert^p\,\Big|\, \F_{\infty} \Big]\Big)^{\inv{p}},
\end{align*}
where $(e_k)_{n\geq 1}$ is a $\F_{\infty}$-decoupled tangent sequence of $(d_n)_{n\geq 1}$ on $(\Omega,\mathcal{A},\P)$. In the next remark it is shown that $T_p$ is well-defined.
\begin{remark}\label{rem:condistribution}
Although the sequence $(e_k)_{n\geq 1}$ is not uniquely defined on $(\Omega,\mathcal{A},\P)$, its conditional distribution given $\F_{\infty}$ is unique. Indeed, if $(\tilde{e}_k)_{k\geq 1}$ is another $\F_{\infty}$-decoupled tangent sequence for $(d_k)_{k\geq 1}$ on $(\Omega,\mathcal{A},\P)$, then by definition we have:
\begin{align}\label{eq:conddistribution}
\P(\tilde{e}_1\in B_1,\ldots, \tilde{e}_n\in B_n\,|\,\F_{\infty})= \P(d_1\in B_1\,|\,\mathcal{F}_{0})\cdot \ldots \cdot \P(d_n\in B_n\,|\,\mathcal{F}_{n-1}),
\end{align}
for all $n\geq 1$ and all Borel sets $B_1, \ldots, B_n$ and the same holds with $(\tilde{e}_k)_{k=1}^n$ replaced by $(e_k)_{k=1}^n$. A monotone class argument implies that for all Borel functions $\phi:X^n\to \R_+$ one has $\E[\phi(e_1, \ldots, e_n)\,|\,\F_{\infty}] = \E[\phi(\tilde{e}_1, \ldots, \tilde{e}_n)\,|\,\F_{\infty}]$. In particular, taking $\phi(x_1, \ldots, x_n) = \big\|\sum_{k=1}^n x_k\big\|^p$ it follows that $T_p(f)$ is unique. Moreover, from \eqref{eq:conddistribution} with $\tilde{e}_k$ replaced by $e_k$, $k=1,\ldots,n$, one also sees that $\E[\phi(e_1, \ldots, e_n)\,|\,\F_{\infty}]$ is $\F_{n-1}$-measurable.
\end{remark}

The following properties of $T_p$ are well-known and easy to prove:
\begin{enumerate}
\item $T_p$ is local, i.e.\ $T_p f=0$ on the set $\bigcap_{n\geq 1} \{\E[\|d_n\|\,|\,\mathcal{F}_{n-1}] =0\}$.
\item $T_p$ is monotone when $r=1$, i.e.\ $T_p(f^n) \leq T_p(f^{n+1})$ (see Lemma \ref{l:symsum}).
\item $T_p$ is predictable, i.e.\ $T_p(f^n)$ is $\F_{n-1}$-measurable (see Remark \ref{rem:condistribution}).
\item $T_p$ is quasilinear for all $p\in (0,\infty)$ and $r\in (0,1]$, and sublinear if $p\in [1,\infty)$ and $r=1$.
\end{enumerate}

For $f\in\mathcal{D}_{\infty}$ let $T^*_p(f):=\sup_{n\geq
1}T_p(f^n)$ and $\n f \n :=\lim_{n\rightarrow \infty}\n f_n \n$, both of which
are well-defined by definition of $\mathcal{D}_{\infty}$. Observe that if $g$ is a decoupled sum sequence of $f$ then $\n
T_p(f)\n_p =  \n g \n_p$.

The first lemma we prove employs the well-known Burkholder stopping-time
technique (see for example \cite{BurkholderGundy_ExIn}, \cite{Burkholder:Geom}). The assumption given by \eqref{pallpass} below can be interpreted as a BMO-condition, this approach has been introduced in \cite{geiss:BMO}.

Let $\tau$ be an $(\F_n)_{n \geq 1}$-stopping time and $f$ an $(\F_n)_{n \geq 1}$-adapted
sequence. The stopped sequence $f^{\tau}$ is defined by $f^{\tau}:=(\one_{\{\tau \geq
n\}}d_n)_{n \geq 1}$ and the started sequence by $^{\tau}\!\!f:=(\one_{\{\tau
< n\}}d_n)_{n \geq 1}$. If $\nu$ is another stopping time then
$^{\tau}\!\!f^{\nu}:=f^{\nu} - f^{\tau}$. It follows from Lemma \ref{lem:preddcoupl} that $^{\tau}\!\!f^{\nu}\in \mathcal{D}_{\infty}$ whenever $f\in \mathcal{D}_{\infty}$. (Thus in particular $T_p(^{\tau}\!\!f^{\nu})$ is well-defined if $f\in \mathcal{D}_{\infty}$.)

\begin{lemma}\label{l:allp1}
Let $p\in (0,\infty)$ and let $\mathcal{D}_{\infty}$ be as defined above. Suppose
that for some $b\in (0,1)$ and $A>0$ we have:
\begin{align}\label{pallpass}
\sup_{f\in \mathcal{D}_{\infty}}\sup_{0\leq k\leq l}\sup_{B\in \F_k, B\neq \varnothing}\P(\n ^{k}\!\!f^{l} \n > A\n T_p(^{k}\!\!f^{l})\n_{\infty}\,|\, B)<b.
\end{align}
Let $\beta,\delta>0$ satisfy
\begin{equation}\label{betadelta_ass}
\beta^{\varrho} - 1 = (2A^{\varrho}+1)\delta^{\varrho},
\end{equation}
where $\varrho:=\min\{r,p\}$. Then for all $f\in \mathcal{D}_{\infty}$ one has
\begin{align}\label{tallph1}
\P(f^*\geq \beta \lambda, T^*_p(f)\maxsym d^* < \delta\lambda) &\leq b \P(f^*> \lambda), \quad \lambda>0.
\end{align}
\end{lemma}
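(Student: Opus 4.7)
The plan is to run a Burkholder good-$\lambda$ argument in which \eqref{pallpass} plays the role of the ``BMO-smallness'' hypothesis. Set $\varrho := \min\{r,p\}$; since $X$ is $r$-normable with $r \in (0,1]$ it is also $\varrho$-normable, so I will systematically use $\|x+y\|^\varrho \leq \|x\|^\varrho + \|y\|^\varrho$ throughout. The first move is a preliminary truncation designed to make $T_p$ pointwise bounded. Because $T_p(f^{n+1})$ is $\mathcal{F}_n$-measurable by property~(iii),
\[
\mu := \inf\{n \geq 0 : T_p(f^{n+1}) \geq \delta\lambda\}
\]
is a stopping time, and I would set $\bar f := f^\mu \in \mathcal{D}_\infty$, whose decoupled tangent differences are $\one_{k \leq \mu} e_k$ by Lemma \ref{lem:preddcoupl}. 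A short case check (on $\mu \geq n$ versus $\mu < n$) gives $T_p(\bar f^n) < \delta\lambda$ on all of $\Omega$, so $\|T_p(\bar f)\|_\infty \leq \delta\lambda$. On the event $E := \{f^* \geq \beta\lambda,\, T^*_p(f)\maxsym d^* < \delta\lambda\}$ one has $\mu = \infty$, hence $\bar f = f$ on $E$, $E \subseteq \bar E := \{\bar f^* \geq \beta\lambda,\, \bar d^* < \delta\lambda\}$, and $\{\bar f^* > \lambda\} \subseteq \{f^* > \lambda\}$; it therefore suffices to prove $\mathbb{P}(\bar E) < b\,\mathbb{P}(\bar f^* > \lambda)$.

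Next I would introduce the classical Burkholder pair of stopping times for $\bar f$,
\[
\bar\sigma := \inf\{n : \|\bar f_n\| > \lambda\},\qquad \bar\tau := \inf\{n : \|\bar f_n\| \geq \beta\lambda\},
\]
write $\bar h := {}^{\bar\sigma}\bar f^{\bar\tau} \in \mathcal{D}_\infty$, and decompose $\bar E = \bigsqcup_{s \geq 1}(\bar E \cap \{\bar\sigma = s\})$. On $\bar E \cap \{\bar\sigma = s\}$, $\varrho$-normability applied to $\bar f_{\bar\sigma} = \bar f_{\bar\sigma-1} + \bar d_{\bar\sigma}$ yields $\|\bar f_{\bar\sigma}\|^\varrho \leq (1 + \delta^\varrho)\lambda^\varrho$, and combined with $\beta^\varrho - 1 > \delta^\varrho$ (from \eqref{betadelta_ass}) this forces $\bar\sigma < \bar\tau$ and
\[
\|\bar h\|^\varrho \geq \|\bar f_{\bar\tau}\|^\varrho - \|\bar f_{\bar\sigma}\|^\varrho > (\beta^\varrho - 1 - \delta^\varrho)\lambda^\varrho = 2A^\varrho\delta^\varrho\lambda^\varrho,
\]
so $\|\bar h\| > 2^{1/\varrho}A\delta\lambda$ on the slice.

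For the matching $L^\infty$ bound on $T_p(\bar h)$, the decoupled tangent sum attached to $\bar h$ is $\xi := \sum_{k = \bar\sigma+1}^{\bar\tau \wedge \mu} e_k$, and I would pair it with $\zeta := \sum_{k=1}^{\bar\sigma} e_k + \sum_{k = \bar\tau \wedge \mu + 1}^{\mu} e_k$ so that $\xi + \zeta = \sum_{k=1}^{\mu} e_k$ while the index sets supporting $\xi$ and $\zeta$ are disjoint and $\mathcal{F}_\infty$-measurable. Consequently $\xi, \zeta$ are $\mathcal{F}_\infty$-conditionally independent and $\zeta$ is $\mathcal{F}_\infty$-conditionally symmetric, so Lemma \ref{l:symsum} delivers
\[
T_p(\bar h)^p \leq 2^{1-p} u_{p/r}\, T_p(\bar f)^p \leq (2^{1/\varrho - 1}\delta\lambda)^p,
\]
i.e.\ $\|T_p(\bar h)\|_\infty \leq 2^{1/\varrho - 1}\delta\lambda$. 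Thus on $\bar E \cap \{\bar\sigma = s\}$ one has $A\|T_p(\bar h)\|_\infty \leq 2^{1/\varrho - 1}A\delta\lambda < 2^{1/\varrho}A\delta\lambda < \|\bar h\|$. Applying hypothesis \eqref{pallpass} to $\bar f$ with stopping times $k = \bar\sigma$, $l = \bar\tau$ and $B = \{\bar\sigma = s\} \in \mathcal{F}_{\bar\sigma}$ then gives $\mathbb{P}(\bar E \cap \{\bar\sigma = s\}) < b\,\mathbb{P}(\bar\sigma = s)$, and summing over $s \geq 1$ yields $\mathbb{P}(E) \leq \mathbb{P}(\bar E) < b\,\mathbb{P}(\bar f^* > \lambda) \leq b\,\mathbb{P}(f^* > \lambda)$.

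The delicate point I anticipate is arranging the decomposition $\xi + \zeta$ so that a \emph{single} application of Lemma \ref{l:symsum} relates $T_p(\bar h)$ directly to $T_p(\bar f)$, rather than passing through $T_p(\bar f^{\bar\tau})$; the latter route would cost a second factor of $2^{1/\varrho - 1}$ and exhaust the constant budget. Once the correct $\zeta$ is in place, the numerical role of \eqref{betadelta_ass} is transparent: the ``$1$'' on the right absorbs $\|\bar f_{\bar\sigma}\|^\varrho$, while the ``$2A^\varrho$'' is precisely tuned so that $2A^\varrho\delta^\varrho\lambda^\varrho$ dominates $A^\varrho\|T_p(\bar h)\|_\infty^\varrho \leq 2^{1-\varrho}A^\varrho\delta^\varrho\lambda^\varrho$, which holds for every $\varrho > 0$.
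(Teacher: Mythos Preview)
Your proof is correct and follows the same Burkholder good-$\lambda$ stopping-time strategy as the paper, with two cosmetic differences worth noting. First, you perform a preliminary truncation $\bar f = f^\mu$ to force $T_p(\bar f)\leq \delta\lambda$ globally before introducing the level-crossing times $\bar\sigma,\bar\tau$, whereas the paper folds the $T_p$- and $d^*$-thresholds into a single stopping time $\sigma$ and works directly with ${}^\mu\!f^{\nu\wedge\sigma}$. Second, for the $T_p$-bound on the started/stopped process you invoke Lemma~\ref{l:symsum} (conditional symmetry) to obtain $T_p(\bar h)\leq 2^{1/\varrho-1}T_p(\bar f)$, while the paper uses the $\varrho$-normability of $L^p(\Omega;X)$ to write $T_p({}^\mu\!f^{\nu\wedge\sigma})^\varrho \leq T_p(f^{\nu\wedge\sigma})^\varrho + T_p(f^{\mu\wedge\sigma})^\varrho$, yielding the slightly weaker bound $2^{1/\varrho}\delta\lambda$. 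Your route therefore lands with a factor-of-two slack when checking $\|\bar h\|>A\|T_p(\bar h)\|_\infty$, whereas the paper's constants are tuned so that \eqref{betadelta_ass} is sharp; the slack is harmless for the lemma as stated. One small point of presentation: when you ``apply \eqref{pallpass} with stopping times $k=\bar\sigma$, $l=\bar\tau$'', you should say explicitly (as the paper does) that on $\{\bar\sigma=s\}$ you are applying \eqref{pallpass} to the element $\bar f^{\bar\tau}\in\mathcal{D}_\infty$ with the deterministic indices $k=s$ and $l$ large, noting that your Lemma~\ref{l:symsum} bound on $T_p({}^s\bar f^{\bar\tau})$ holds on all of $\Omega$, not just on $\{\bar\sigma=s\}$, so the $L^\infty$-norm in \eqref{pallpass} is controlled.
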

The proof is quite standard. For convenience of the reader we give the details.
\begin{proof}
Let $\beta,\delta>0$ satisfy \eqref{betadelta_ass}. Let $f\in \mathcal{D}_{\infty}$ and let $\lambda>0$ be
arbitrary. Define the following stopping times:
\begin{align*}
\mu &= \inf\{n\geq 1: \Vert f_n \Vert > \lambda\};\\
\nu&=  \inf\{n\geq 1: \Vert f_n \Vert > \beta\lambda\};\\
\sigma &= \inf\{n\geq 1: T_p(f^{n+1}) \maxsym \Vert d_n \Vert > \delta\lambda\}.
\end{align*}
On the set $\{\nu < \infty, \sigma=\infty \}$ one has by \eqref{reverseT} that:
\begin{align*}
\Vert ^\mu\!\!f^{\nu \minsym \sigma} \Vert^{\varrho} &\geq \Vert f^{\nu \minsym \sigma}
\Vert^{\varrho} - \Vert f^{\mu -1} \Vert^{\varrho} - \Vert d_{\mu}\Vert^{\varrho} \\
&  > (\beta\lambda)^{\varrho}-\lambda^{\varrho}-(\delta\lambda)^{{\varrho}} = (\beta^{\varrho}-1-\delta^{\varrho})\lambda^{\varrho}.
\end{align*}
We show that
\begin{align}\label{eq:claimtussendoor}
\Vert T_p(^\mu\!\!f^{\nu \minsym \sigma})\Vert_{\infty} & \leq
2^{\inv{\varrho}}\delta\lambda.
\end{align}
On the set $\{ \mu \geq \sigma\}$ one has $T_p(^\mu\!\!f^{\nu \minsym \sigma})=0$. On the set $\{ \mu < \sigma\}$ one has:
\begin{align*}
[T_p(^\mu\!\!f^{\nu \minsym \sigma})]^{\varrho}&=[T_p(^{\mu \minsym \sigma}\!\!f^{\nu \minsym
\sigma})]^{\varrho} = [T_p(f^{\nu \minsym \sigma} - f^{\mu \minsym \sigma})]^{\varrho} \\
& \leq [T_p(f^{\nu \minsym \sigma})]^{\varrho} + [T_p(f^{\mu \minsym \sigma})]^{\varrho},
\end{align*}
using that if $X$ is $r$-normable, then $L^p(\Omega,X)$ is $\varrho$-normable.
By definition of $\sigma$ one has $T_p(f^{\nu \minsym
\sigma}) \leq \delta \lambda$ and
$T_p(f^{\mu \minsym \sigma}) \leq \delta \lambda$, from which \eqref{eq:claimtussendoor} follows.

We obtain:
\begin{align}\label{tallph2}
&\P(f^*> \beta \lambda, T^*_p(f)\maxsym d^* \leq \delta\lambda) = \P(\nu< \infty, \sigma=\infty) \notag\\
&\qquad \qquad \leq \P(\Vert ^\mu\!\!f^{\nu \minsym \sigma} \Vert > (\beta^{\varrho}-1-\delta^{\varrho})^{\inv{\varrho}}\lambda) \notag\\
&\qquad \qquad \leq \P(\Vert ^\mu\!\!f^{\nu \minsym \sigma} \Vert >
2^{-\inv{\varrho}}\delta^{-1}(\beta^{\varrho}-1-\delta^{\varrho})^{\inv{\varrho}}\Vert T_p(^\mu\!\!f^{\nu \minsym
\sigma})\Vert_{\infty}).
\end{align}
As $\beta$ and $\delta$ satisfy \eqref{betadelta_ass} we have $A= 2^{-\inv{\varrho}}\delta^{-1}(\beta^{\varrho}-1-\delta^{\varrho})^{\inv{\varrho}}$. Thus it follows from assumption \eqref{pallpass} that
\begin{align*}
&\P(\Vert ^\mu\!\!{f}^{\nu \minsym \sigma} \Vert > 2^{-\inv{\varrho}}\delta^{-1}(\beta^{\varrho}-1-\delta^{\varrho})^{\inv{\varrho}}\Vert T_p(^\mu\!\!{f}^{\nu \minsym
\sigma})\Vert_{\infty}\,|\,\mu<\infty) \\
& \qquad \qquad = \P( ^\mu\!\!f^{\nu \minsym \sigma} > A \n T_p(^\mu\!\!f^{\nu \minsym \sigma})\n_{\infty} \,|\, \mu<\infty) \\
& \qquad \qquad = \P(\mu<\infty)^{-1} \sum_{k=1}^{\infty} \P(^\mu\!\!f^{\nu \minsym \sigma} > A \n T_p(^\mu\!\!f^{\nu \minsym \sigma})\n_{\infty} \,|\, \mu=k)\P(\mu=k)\\
& \qquad \qquad \leq b \P(\mu<\infty)^{-1} \sum_{k=1}^{\infty} \P(\mu=k) = b.
\end{align*}
As $^\mu\!\!f^{\nu \minsym \sigma}=0$ on $\{\mu = \infty\}$ we have:
\begin{align*}
\Vert ^\mu\!\!f^{\nu \minsym \sigma} \Vert \leq
2^{-\inv{\varrho}}\delta^{-1}(\beta^{\varrho}-1-\delta^{\varrho})^{\inv{\varrho}}\Vert T_p(^\mu\!\!f^{\nu \minsym
\sigma})\Vert_{\infty}
\end{align*}
on that set. Combining the above we obtain:
\begin{align*}
& \P(\Vert ^\mu\!\!f^{\nu \minsym \sigma} \Vert >
2^{-\inv{\varrho}}\delta^{-1}(\beta^{\varrho}-1-\delta^{\varrho})^{\inv{\varrho}}\Vert T_p(^\mu\!\!f^{\nu \minsym
\sigma})\Vert_{\infty}) \\
& \qquad \qquad \qquad \qquad \qquad \qquad \leq b\P(\mu<\infty) = bP(f^*> \lambda),
\end{align*}
which, when inserted in equation \eqref{tallph2}, gives \eqref{tallph1}.
\end{proof}

\begin{remark}\label{r:geiss}
Suppose $X$ is a Banach space, i.e.\ $r=1$. In \cite{geiss:BMO} it has been demonstrated how extrapolation results can be obtained from BMO-type assumptions like \eqref{pallpass} in Lemma \ref{l:allp1}.
In particular, from Corollary 6.3 and Proposition 7.3 in \cite{geiss:BMO} one can deduce that
if assumption \eqref{pallpass} is satisfied, then there exists a constant $c_{X,b,p}$ such that for all $1\leq q < \infty$ and all $f\in \mathcal{D}_{\infty}$ one has:
\begin{align*}
\n f^* \n_q \leq c_{X,b,p} q \n T_p(f) \n_q.
\end{align*}
Observe that for $q\geq p$ we have $\n T_p(f) \n_q\leq \n g\n_q$ by the conditional H\"older's inequality, where $g$ is a decoupled sum sequence of $f$. However, it seems that this approach fails when $q<p$ as well as in the more general setting that we consider in Theorem \ref{t:allpq}. Thus we proceed in a different manner.
\end{remark}

Let $\Phi:\mathbb{R}_+ \rightarrow \mathbb{R}_+$ be an
arbitrary yet fixed non-decreasing continuous function satisfying $\Phi(0)=0$.
Moreover, we assume there exists a $q\in(0,\infty)$ such that
\begin{align}\label{qPhi}
\Phi(s t) \leq s^q \Phi(t),\quad \textrm{for all } s,t\in \R_+.
\end{align}
The set of all such functions is referred to as $F_q$ in \cite{HitMS}.
\begin{prop}\label{p:allp2}
Let $p\in (0,\infty)$. Let $\mathcal{D}_{\infty}$, $\Phi$ and $q$ be as defined above, and again set $\varrho:=\min\{r,p\}$. Suppose that \eqref{pallpass} holds for some $b \in (0, 2^{-\frac{2p}{\varrho}+p-1})$ and some $A>0$.
Then for all $f\in \mathcal{D}_{\infty}$ we have
\begin{align*}
\E\Phi(f^{*}) & \leq C_{X,r,p,q} \E \Phi(\|g\|),
\end{align*}
where $g$ is a decoupled sum sequence of $f$ and $C_{X,r,p,q}$ as in \eqref{quasi_decoup_const} below. In particular, for $r=1$ and $p\geq 1$ one can take
\begin{align}\label{decoup_const}
C_{X,1,p,q}& =2^{2q +2}\Big[ 2^{p+q+1}(2^{q}+1) \left[2A  +1\right]^{q}\left[1-[2^{p+1}b]^{-\frac{1}{q}}\right]^{-q} + 1\Big].
\end{align}
\end{prop}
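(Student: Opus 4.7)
The plan is to combine the classical Burkholder good-$\lambda$ integration technique with a Paley-Zygmund-type transfer from the conditional $L^p$-norm $T_p(f)$ to the actual random variable $\|g\|$. First, I would integrate the good-$\lambda$ estimate of Lemma \ref{l:allp1},
\[
\P(f^*>\beta\lambda)\leq b\,\P(f^*>\lambda)+\P(T^*_p(f)\vee d^*\geq\delta\lambda),
\]
against the Stieltjes measure $d\Phi$ using the layer-cake identity $\E\Phi(Y)=\int_0^\infty \P(Y>\lambda)\,d\Phi(\lambda)$ for $Y\geq 0$. Applying $\Phi(\lambda/\beta)\geq\beta^{-q}\Phi(\lambda)$ and $\Phi(t/\delta)\leq\delta^{-q}\Phi(t)$, both immediate from the $F_q$-growth $\Phi(st)\leq s^q\Phi(t)$, yields
\[
(\beta^{-q}-b)\,\E\Phi(f^*)\leq\delta^{-q}\,\E\Phi(T^*_p(f)\vee d^*),
\]
after a standard truncation $f^*\wedge N$ keeping the integrals finite. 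The hypothesis $b<2^{-2p/\varrho+p-1}$ leaves room to choose $\beta,\delta$ satisfying the constraint $\beta^\varrho-1=(2A^\varrho+1)\delta^\varrho$ of Lemma \ref{l:allp1} together with $\beta^{-q}>b$; in the Banach case $r=1$, $p\geq 1$, the choice $\beta^{-q}=2^{p+1}b$ produces exactly the factors $(2A+1)^q$ and $[1-(2^{p+1}b)^{1/q}]^{-q}$ visible in \eqref{decoup_const}.

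Next, I would split $\Phi(T^*_p(f)\vee d^*)\leq\Phi(T^*_p(f))+\Phi(d^*)$ and bound each summand in terms of $\E\Phi(\|g\|)$. For the $d^*$-term, Lemma \ref{l:tanseqP} gives $\P(d^*>t)\leq 2\P(e^*>t)$; since $e^*\leq 2^{1/r}g^*$ pointwise and the conditional L\'evy inequality (Lemma \ref{l:levy}) gives $\P(g^*>t\mid\F_\infty)\leq 2\P(\|g\|>2^{1-1/r}t\mid\F_\infty)$, layer-cake integration together with the $F_q$-bound yields $\E\Phi(d^*)\leq 2^{2q/r-q+2}\,\E\Phi(\|g\|)$. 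For the $T^*_p(f)$-term, observe that when $r=1$ and $p\geq 1$ the sequence $(\E[\|g_n\|^p\mid\F_\infty])_n$ is monotone (as $\|g_n\|^p$ is a conditional submartingale), whence $T^*_p(f)=T_p(f)=(\E[\|g\|^p\mid\F_\infty])^{1/p}$; in general the conditional L\'evy inequality gives $T^*_p(f)\leq 2^{1/r-1+1/p}\,T_p(f)$.

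The delicate step is bounding $\E\Phi(T_p(f))$ by $\E\Phi(\|g\|)$. I would rely on a conditional Kahane-Khintchine moment comparison: since $g=\sum_k e_k$ is, conditionally on $\F_\infty$, a sum of independent symmetric random variables in the $r$-normable space $X$, one has $\E[\|g\|^{2p}\mid\F_\infty]\leq K^{2p}\,T_p(f)^{2p}$ for some $K=K(p,r)$. A conditional Paley-Zygmund inequality then yields, for $c\in(0,1)$,
\[
\P(\|g\|>c\,T_p(f)\mid\F_\infty)\geq (1-c^p)^2 K^{-2p}=:\alpha.
\]
Since $T_p(f)$ is $\F_\infty$-measurable, multiplying by $\one_{\{T_p(f)>t\}}$, taking $\E$, and using $\{\|g\|>cT_p(f),\,T_p(f)>t\}\subseteq\{\|g\|>ct\}$ gives $\alpha\,\P(T_p(f)>t)\leq\P(\|g\|>ct)$. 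Integrating this tail bound against $d\Phi$ and invoking $\Phi(\|g\|/c)\leq c^{-q}\Phi(\|g\|)$ produces $\E\Phi(T^*_p(f))\leq C(p,q,r)\,\E\Phi(\|g\|)$. Assembling the three parts and tracking constants through the explicit choice of $\beta,\delta$ yields $\E\Phi(f^*)\leq C_{X,r,p,q}\,\E\Phi(\|g\|)$, matching \eqref{decoup_const} in the Banach case.

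The principal obstacle is the third step. A direct use of Lemma \ref{l:revKol} (reverse Kolmogorov) for the Paley-Zygmund transfer is too weak: the combination of the $r$-normable constants $u_{p/r}$ and the a priori bound $\E[(e^*)^p\mid\F_\infty]\lesssim T_p(f)^p$ from Lemma \ref{l:levy} forces its right-hand side to be negative already when $r=p=1$. This forces me to invoke the Kahane-Khintchine moment equivalence for conditionally independent symmetric sums in $r$-normable spaces to close the argument.
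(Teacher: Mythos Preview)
Your third step contains a genuine gap. The ``conditional Kahane--Khintchine moment comparison'' $\E[\|g\|^{2p}\mid\F_\infty]\leq K(p,r)^{2p}\,T_p(f)^{2p}$ is false for general conditionally independent symmetric summands. Kahane--Khintchine compares moments of \emph{Rademacher} (or Gaussian) series; for arbitrary independent symmetric $e_k$ no such universal bound holds without an extra maximal term $e^*$ (Hoffmann--J{\o}rgensen). Already a single symmetric scalar $e_1$ with $\E|e_1|^p=1$ and $\E|e_1|^{2p}$ arbitrarily large disproves the claim. Consequently the Paley--Zygmund step, and hence the whole transfer $\E\Phi(T_p(f))\lesssim\E\Phi(\|g\|)$, collapses; you also cannot recover the explicit constant \eqref{decoup_const} this way.

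The paper fixes precisely this obstruction by inserting a Davis decomposition $d_n=d'_n+d''_n$ with $d'_n=d_n\one_{\{\|d_n\|\leq 2d_{n-1}^*\}}$ \emph{before} applying Lemma~\ref{l:allp1}. The decoupled version then satisfies $e'^*\leq 2d^*$ pointwise, and $d^*$ is $\F_\infty$-measurable. On the event $\{T_p^*(f')\geq\delta\lambda,\ 2d^*<\delta_2\lambda\}$ with $\delta_2=4^{-1/\varrho}\delta$, the reverse Kolmogorov inequality (Lemma~\ref{l:revKol}) now has a \emph{controllable} $e'^*$-term and yields the uniform lower bound $\P(g'^*\geq\delta_2\lambda\mid\F_\infty)\geq 2^{p-2p/\varrho}$; this is the mechanism that passes from $T_p^*(f')$ to $g'^*$, replacing your unavailable moment comparison. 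The large-jump part $f''$ is disposed of separately via the telescoping estimate $\|f''\|^\varrho\leq(1-2^{-\varrho})^{-1}(d^*)^\varrho$, which feeds directly into the $d^*\to\|g\|$ bound you already have. In short, you correctly diagnosed that Lemma~\ref{l:revKol} is too weak \emph{as applied to $f$}, but the remedy is the Davis decomposition (which makes Lemma~\ref{l:revKol} applicable to $f'$), not Kahane--Khintchine.
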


For a positive random variable we can write $\E \Phi(\xi) =
\int_0^\infty \P(\xi>\lambda) \, d\Phi(\lambda)$, where the integral is of
Lebesgue-Stieltjes type.

\begin{proof}
Let $f\in
\mathcal{D}_{\infty}$ be given. The Davis decomposition of $(d_n)_{n\geq1}$ is given by $d_n=d_n'+d_n''$ where $d_1':= 0$, $d_1'':=d_1$ and for $n\geq 2$:
\begin{align*}
d_n'=d_n \one_{\{\Vert d_n \Vert \leq 2d_{n-1}^*\}} \ \ \text{and} \ \
d_n''=d_n \one_{\{\Vert d_n \Vert > 2d_{n-1}^*\}},
\end{align*}
and $f_n' = \sum_{k=1}^n d_k'$ and $f_n'' = \sum_{k=1}^n d_k''$.
It follows from Lemma \ref{lem:preddcoupl} that $f',f''\in \mathcal{D}_{\infty}$ and that $\F_{\infty}$-decoupled tangent sequences of $(d_n')_{n\geq1}$ and  $(d_n'')_{n\geq1}$ are given by
$e_1':=0, e_1'':=e_1$ and for $n\geq 2$:
\begin{equation}\label{eq:davisdec}
e_n'=e_n \one_{\{\Vert e_n\Vert \leq 2 d_{n-1}^*\}}, \ \ \text{and} \ \
e_n''=e_n \one_{\{\Vert e_n\Vert > 2 d_{n-1}^*\}}.
\end{equation}
Moreover, the random variable $d_n'$ is bounded by the $\F_{n-1}$-measurable random variable
$2d_{n-1}^*$. On the other hand, for the sequence $(d_n'')_{n\geq 1}$ we have on the set $\{\n d_n \n > 2 d_{n-1}^*\}$, $n\geq 2$,
\begin{align*}
(2^{\varrho}-1)\n d_n''\n^{\varrho} + (2d_{n-1}^*)^{\varrho} \leq (2^{\varrho}-1+1)\n d_n''\n^{\varrho} \leq
2^{\varrho}(d_n^*)^{\varrho},
\end{align*}
whence $\n d_n''\n^{\varrho}\leq (1-2^{-\varrho})^{-1}[(d_n^*)^{\varrho} - (d_{n-1}^*)^{\varrho}]$ and thus
\begin{align}\label{estDavis}
\Vert f'' \Vert^r & \leq \sum_{n=1}^{\infty} \Vert d_n'' \Vert^r \leq
(1-2^{-\varrho})^{-1} (d^{*})^{\varrho}
\end{align}
(for $\varrho=1$ see \cite{Davis_intMSF} or \cite[inequality (4.5)]{Burkholder:fu}).

By \eqref{estDavis} and due to the fact that $\Phi$ is non-decreasing we have:
\begin{align*}
& \E\Phi(f^*) \leq \E\Phi(2^{\inv{\varrho}-1}[f'^* + f''^*]) \leq \E\Phi(2^{\inv{\varrho}-1}[f'^* + (1-2^{-\varrho})^{-\inv{\varrho}} d^*]) \notag\\
&\qquad \qquad  = \int_0^\infty \P(2^{\inv{\varrho}-1}[f'^* + (1-2^{-\varrho})^{-\inv{\varrho}} d^*]>\lambda) \, d \Phi(\lambda) \notag\\
&\qquad \qquad  \leq \E \Phi(2^{\inv{\varrho}}f'^*) +  \int_0^\infty \P(2^{\inv{\varrho}} d^*>(1-2^{-\varrho})^{\inv{\varrho}}\lambda) \, d \Phi(\lambda). \notag
\end{align*}
Using Lemma \ref{l:tanseqP} and the L\'evy inequality
applied conditionally (Lemma \ref{l:levy}) we can estimate the
right-most term in the above:
\begin{equation}
\begin{aligned}\label{dstarg}
&\int_0^\infty \P(d^*>2^{-\inv{\varrho}}(1-2^{-\varrho})^{\inv{\varrho}}\lambda) \, d \Phi(\lambda) \leq 2 \int_0^\infty \P(e^*>2^{-\inv{\varrho}}(1-2^{-\varrho})^{\inv{\varrho}}\lambda) \, d \Phi(\lambda)
\\ &\qquad \qquad   \leq 4 \int_0^\infty \P(\Vert g \Vert >2^{1-\frac{2}{\varrho}}(1-2^{-\varrho})^{\inv{\varrho}}\lambda) \, d \Phi(\lambda) \\
& \qquad \qquad = 4 \E \Phi(2^{\frac{2}{\varrho}-1}(1-2^{-\varrho})^{-\inv{\varrho}}\Vert g\Vert ).
\end{aligned}
\end{equation}
Therefore, we conclude that
\begin{equation}\label{eq:toproofPhi}\begin{aligned} \E\Phi(f^*) & \leq 2^{\frac{q}{\varrho}} \E \Phi(f'^*) +  2^{\frac{2q}{\varrho}-q+2}(1-2^{-\varrho})^{-\frac{q}{\varrho}} \E \Phi(\n g\n),
\end{aligned}
\end{equation}
with $q$ as in \eqref{qPhi}. It remains to estimate $\E\Phi(f'^*)$, for which we use Lemma \ref{l:allp1}.\par

Set $$\beta:= \left[2^{\frac{2p}{\varrho}-p+1}b\right]^{-\inv{q}},$$ we have $\beta>1$ because $b<2^{-\frac{2p}{\varrho}+p-1}$. Setting $ \delta^{\varrho} := (2A^{\varrho}+1)^{-1}(\beta^{\varrho}-1)$
it follows that $\beta$ and $\delta$ satisfy the conditions of Lemma \ref{l:allp1}.

We follow the proof of \cite[Lemma 2.2]{HitMS} to show that one has
\begin{equation}\label{eq:HitMS}\begin{aligned}
& \P(f'^*\geq \beta \lambda, g'^*<\delta_2 \lambda) \\ & \qquad \qquad \leq b \P(f'^*\geq
\lambda ) + \P(2d^*\geq \delta_2 \lambda)+  (1-2^{p-\frac{2p}{\varrho}}) \P(f'^*
\geq \beta\lambda),\end{aligned}
\end{equation}
where $\delta_2 := 4^{-\inv{\rho}}\delta$ and $g' = \sum_{n\geq 1} e_n'$. Indeed,
\begin{align}\label{probsplit}
\P(f'^*\geq \beta \lambda, g'^*<\delta_2 \lambda) &\leq \P(f'^*\geq \beta \lambda, T_p^*(f')<\delta \lambda, 2d^*<\delta_2\lambda) + \P(2d^*\geq \delta_2\lambda) \notag\\
&\quad +\P(f'^*\geq \beta \lambda, T_p^*(f')\geq\delta \lambda, 2d^*<\delta_2\lambda, g'^*<\delta_2 \lambda).
\end{align}
As $\delta_2\leq\delta$ it follows from the definition of $(d_n')_{n\geq 1}$
and from Lemma \ref{l:allp1} that for the first probability on the right-hand
side of \eqref{probsplit} one has:
\begin{align}\label{probsplit_h1}
\P(f'^*\geq \beta \lambda, T_p^*(f')<\delta \lambda, 2d^*<\delta_2\lambda)&\leq \P(f'^*\geq \beta \lambda, T_p^*(f')<\delta \lambda, d'^*<\delta\lambda) \notag\\
& \leq b\P(f'^*\geq \lambda).
\end{align}
In order to deal with the last probability in \eqref{probsplit}, observe that $f'^*,
d^*$ and $T^*_p(f')$ are all $\F_{\infty}$-measurable. Thus by conditioning on
$\F_{\infty}$ we obtain
\begin{align}\label{probsplit_h2}
\E[\one_{\{f'^*\geq \beta \lambda, T^*_p(f')\geq \delta\lambda, 2d^*<\delta_2\lambda\}}\P(g'^*<\delta_2\lambda\,|\,\F_{\infty})].
\end{align}
By Lemma \ref{l:revKol} we have:
\begin{align*}
\P(g'^*<\delta_2\lambda\,|\,\F_{\infty}) & \leq 1-2^{p-1}\left[ 2^{-\frac{2p}{\varrho}+2} -\frac{(\delta_2\lambda)^p+\E [(e'^*)^p\,|\,\F_{\infty}]}{\E (\n g' \n^{p}\,|\,\F_{\infty})}\right],
\end{align*}
observing that $u_{p/\varrho}=2^{\frac{p}{\varrho}-1}$ as $p\geq \varrho$.
Note that $\E (\n g' \n^{p}\,|\,\F_{\infty})= T_p(f')$ and by \eqref{eq:davisdec} we have $e'^*\leq 2d^*$, and thus on the set
\[S:=\{f'^*\geq \beta \lambda,T^*_p(f')\geq \delta\lambda, 2d^*<\delta_2\lambda\}\]
one has:
\begin{align*}
\P(g'^*<\delta_2\lambda\,|\,\F_{\infty}) & \leq 1-2^{p-1}\left[ 2^{-2\frac{p}{\varrho}+2}-\frac{2(\delta_2\lambda)^p}{(\delta\lambda)^p}\right]
= 1-2^{p-\frac{2p}{\varrho}}.
\end{align*}
Therefore we find:
\begin{align}\label{probsplit_h3}
\E[\one_{S}\P(g'^*<\delta_2\lambda\,|\,\F_{\infty})] \leq
\E[\one_{S}(1-2^{p-\frac{2p}{\varrho}})] \leq (1-2^{p-\frac{2p}{\varrho}})\P(f^*\geq
\beta\lambda).
\end{align}
Combining equations \eqref{probsplit}, \eqref{probsplit_h1},
\eqref{probsplit_h2} and \eqref{probsplit_h3} gives \eqref{eq:HitMS}.\par It
follows from \eqref{eq:HitMS} that
\begin{align*}
\P&(f'^*\geq \beta \lambda) \\ & \leq b \P(f'^*\geq \lambda ) + \P(2d^*\geq
\delta_2 \lambda)+  (1-2^{p-\frac{2p}{\varrho}}) \P(f'^* \geq \beta\lambda) +
\P(g'^*\geq \delta_2 \lambda).
\end{align*}
Collecting terms and integrating with respect to $d\Phi(\lambda)$ gives that
\begin{align*}
\E \Phi(f'^*/\beta) \leq 2^{\frac{2p}{\varrho}-p}[b \E \Phi(f'^*) + \E \Phi(2 d^*/\delta_2) +  \E\Phi(g'^*/\delta_2)].
\end{align*}
From this we see that, because $\Phi$ is non-decreasing,
\begin{align*}
\E \Phi(f'^*) & = \E \Phi(\beta f'^*/\beta )\leq \beta^{q} \E \Phi(f'^*/\beta) \\ & \leq 2^{\frac{2p}{\varrho}-p}\beta^{q}[ b \E \Phi(f'^*) + \E \Phi(2 d^*/\delta_2) + \E\Phi(g'^*/\delta_2)].
\end{align*}
Our choice of $\beta$ was such that $2^{\frac{2p}{\varrho}-p}\beta^{q} b = \inv{2} $. As $\delta_2 =4^{-\inv{\varrho}}\delta$ we have:
\begin{equation}\label{eq:HitMS4}
\begin{aligned} \E \Phi(f'^*) & \leq
2^{\frac{2p}{\varrho}-p+1}\beta^{q} [\E \Phi(2d^*/\delta_2) + \E\Phi(g'^*/\delta_2)]
 \\ &  \leq 2^{\frac{2p}{\varrho}-p+\frac{2q}{\varrho}+1} (\beta/\delta)^q [ 2^{q}\E \Phi(d^*) +   \E\Phi(g'^*)].
\end{aligned}
\end{equation}
As before in \eqref{dstarg} one can prove that $\E \Phi(d^*)\leq
2^{\frac{q}{\varrho}-q+2} \E \Phi(\Vert g\Vert$). By the L\'evy
inequality we obtain $\E\Phi(g'^*)\leq 2^{\frac{q}{\varrho}-q+1} \E\Phi(\Vert g'\Vert )$. By Corollary \ref{cor:kahanecontractionalmost} and the definition of $(e_n')_{n\geq 1}$ we have:
\begin{equation} \label{phiKahane} \begin{aligned}
\E \Phi( \n g'\n) &= \E \int_{0}^{\infty} \P\Big(\Big\n \sum_{k=1}^{n} \one_{\{\n e_k\n \leq d_{k-1}^*\}} e_k \Big\n>\lambda \,\Big| \, \F_{\infty} \Big) d\Phi(\lambda)
\\ & \leq 2 \int_{0}^{\infty} \P\Big(\Big\n \sum_{k=1}^{n} e_k \Big\n>2^{1-\inv{\varrho}}\lambda \,\Big| \,\F_{\infty} \Big) d\Phi(\lambda) \leq 2^{\frac{q}{\varrho}-q+1}\E \Phi( \n g \n).
\end{aligned} \end{equation}

Combining equations \eqref{eq:toproofPhi} and \eqref{eq:HitMS4} with the estimates above gives:
\begin{align*}
\E \Phi(f^*) &\leq C_{X,r,p,q} \E \Phi(\n g \n),
\end{align*}
for all $f\in \mathcal{D}_{\infty}$, where
\begin{align}\label{quasi_decoup_const}
C_{X,r,p,q}& = 2^{\frac{2q}{\varrho}-q +2}\big[ 2^{\frac{2p}{\varrho}-p+\frac{2q}{\varrho}-q+1}(2^{2q}+2^{\frac{q}{\varrho}})(\beta/\delta)^{\frac{q}{\varrho}} + (1-2^{-\varrho})^{-\frac{q}{\varrho}}\big],
\end{align}
and
$$ \beta/\delta = \left[2A^{\varrho} +1\right]^{\inv{\varrho}}\left[1-[2^{\frac{2p}{\varrho}-p+1}b]^{-\frac{\varrho}{q}}\right]^{-\inv{\varrho}}.$$
\end{proof}

Finally, we recall the following lemma, which can be proven like \cite[Corollary 6.4.3]{delaPG}. The inequalities in this lemma are to be interpreted in the sense that the left-hand side is finite whenever the right-hand side is so.
\begin{lemma}\label{l:condsym}
Let $X$ be an $r$-normable quasi-Banach space. Suppose that there exists a $C\geq 0$ such that for every complete probability space $(\O,(\F_n)_{n\geq 1},\mathcal{A},\P)$ and every $(\F_n)_{n\geq 1}$-adapted $X$-valued sequence $(f_n)_{n\geq 1}$, where $f_n-f_{n-1}$ is $\F_{n-1}$-conditionally symmetric for all $n\geq 1$ ($f_0\equiv 0$), and every decoupled sum sequence $g$ of $f$ we have:
\begin{align*}
\E \Phi(f_n^*) \leq C\E \Phi(g_n^*), \ \ n\geq 1.
\end{align*}
Then for every complete probability space $(\O,(\F_n)_{n\geq 1},\mathcal{A},\P)$ and every $X$-valued sequence $(f_n)_{n\geq 1}$ adapted to $(\F_n)_{n\geq 1}$ we have:
\begin{align*}
\E \Phi(f_n^*) \leq  2^{\frac{q}{r}}(2^{1+\frac{q}{r}}C +1) \E \Phi(g_n^*), \ \ n\geq 1,
\end{align*}
where $q$ is as in $\eqref{qPhi}$. The same result holds with $f_n^*$ and $g_n^*$ replaced by $f_n$ and $g_n$ in both the assumption and the assertions.
\end{lemma}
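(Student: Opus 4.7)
\emph{Plan.} I would adapt the symmetrization argument from the Banach setting to the quasi-Banach setting by using the decoupled tangent sequence itself to symmetrize. Write $d_n := f_n - f_{n-1}$, let $(e_n)_{n\geq 1}$ be an $\F_{\infty}$-decoupled tangent sequence of $(d_n)$, put $g_n := \sum_{k=1}^n e_k$, and set $\hat{d}_n := d_n - e_n$, $\hat{f}_n := f_n - g_n$, $\G_n := \F_n \vee \sigma(e_1,\ldots,e_n)$. The central claim is that $(d_n, e_n)$ is $\G_{n-1}$-conditionally i.i.d.\ with common law $\mathcal{L}(d_n\,|\,\F_{n-1})$, so that $\hat{d}_n$ is $\G_{n-1}$-conditionally symmetric. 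To prove it one conditions first on $\F_{\infty}$: since $(e_k)_k$ is $\F_{\infty}$-conditionally independent with $\mathcal{L}(e_n\,|\,\F_{\infty}) = \mathcal{L}(d_n\,|\,\F_{n-1})$, and this kernel is already $\F_{n-1}$-measurable, one obtains both $\mathcal{L}(e_n\,|\,\G_{n-1}) = \mathcal{L}(d_n\,|\,\F_{n-1})$ and the $\F_{n-1}$-conditional independence of $d_n$ from $(e_1,\ldots,e_{n-1})$.

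On a further extension of the probability space I would introduce two sequences $(\tilde{e}_n^{(i)})_{n\geq 1}$, $i=1,2$, that are $\F_{\infty}$-conditionally independent of each other and of $(e_n)$, each being a copy of the $\F_{\infty}$-decoupled tangent sequence of $(d_n)$. Setting $\hat{e}_n := \tilde{e}_n^{(1)} - \tilde{e}_n^{(2)}$ and $\hat{g}_n := \tilde{g}_n^{(1)} - \tilde{g}_n^{(2)}$ with $\tilde{g}_n^{(i)} := \sum_{k=1}^n \tilde{e}_k^{(i)}$, a direct computation using the first paragraph shows that $\hat{g}$ is a $\G_{\infty}$-decoupled sum sequence of $\hat{f}$; moreover $(\tilde{g}_n^{(i)})_n$ equals $(g_n)_n$ in distribution by uniqueness of the tangent law.

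Now I combine by $r$-normability and $\Phi(st)\leq s^q\Phi(t)$. The estimate $\n f_n\n^r \leq \n\hat{f}_n\n^r + \n g_n\n^r$ gives $f_n^{*r} \leq \hat{f}_n^{*r} + g_n^{*r}$, and hence
$$\E\Phi(f_n^*) \leq 2^{q/r}\bigl(\E\Phi(\hat{f}_n^*) + \E\Phi(g_n^*)\bigr).$$
Applied to $\hat{g}_n = \tilde{g}_n^{(1)} - \tilde{g}_n^{(2)}$ the same trick, combined with the equality in distribution, yields $\E\Phi(\hat{g}_n^*) \leq 2^{q/r+1}\E\Phi(g_n^*)$. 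The assumption $\E\Phi(\hat{f}_n^*) \leq C\,\E\Phi(\hat{g}_n^*)$ is applicable because $(\hat{d}_n)$ is $\G_{n-1}$-conditionally symmetric, and chaining the three estimates produces the stated constant $2^{q/r}(2^{1+q/r}C + 1)$. The version without suprema follows verbatim with each maximal process replaced by its $n$-th entry. The main obstacle is the conditional-distribution bookkeeping in the first paragraph; once that is in place, the rest is a routine combination of $r$-normability and the $F_q$-property of $\Phi$.
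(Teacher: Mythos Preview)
Your proposal is correct and follows precisely the symmetrization argument that the paper has in mind: the paper does not write out a proof but refers to \cite[Corollary 6.4.3]{delaPG}, which is exactly this construction (symmetrize $d_n$ by subtracting its own decoupled copy $e_n$, work with the enlarged filtration $\G_n=\F_n\vee\sigma(e_1,\ldots,e_n)$, and take a further pair of independent decoupled copies to build the decoupled sum sequence of $\hat f$). Your bookkeeping of the conditional laws, the verification that $(\hat e_n)$ is a $\G_\infty$-decoupled tangent sequence of $(\hat d_n)$, and the combination via $r$-normability and the $F_q$-property of $\Phi$ all reproduce the stated constant $2^{q/r}(2^{1+q/r}C+1)$.
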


\section{$p$-Independence and the decoupling constant}\label{s:allp}
The $p$-independence of the decoupling inequality follows from taking $\Phi(s)=s^q$ in Theorem \ref{t:allpq} below.
\begin{thm}\label{t:allpq}
Let $X$ be an $r$-normable quasi-Banach space in which the decoupling inequality \eqref{decoup}
holds for some $p\in (0,\infty)$, then for $\Phi:\R_+\rightarrow \R_+$ continuous, satisfying $\Phi(0)=0$ and with $q$ as in \eqref{qPhi}, there exists a
constant $K=K_{X,r,p,q}$ such that for all complete probability spaces $(\O,\mathcal{A},(\F_n)_{n\geq 1},\P)$ and $(\F_n)_{n\geq 1}$-adapted sequences
$(f_n)_{n \geq 1}$ one has:
\begin{align}\label{decoup1q}
\E \Phi( \Vert f_n \Vert ) \leq  K \E \Phi( \Vert g_n \Vert )   \ \ \text{and} \ \
\E \Phi (f_n^*) \leq K \E \Phi(g_n^*) ,  \ \ n\geq 1,
\end{align}
where $g$ is a $\F_{\infty}$-decoupled sum sequence of $f$. \par
Now assume $X$ is a Banach space and $p\geq 1$. Then the constant $K$ can be estimated by:
\begin{align}\label{decoupconst}
K&\leq e^q 2^{\frac{3q}{p}+p+7q+7}D_p^q(X)\big(\tfrac{q}{p}\big)^{q},
\end{align}
and in particular,
\begin{align}\label{eq:geissest}
D_q(X)&\leq e 2^{\frac{3}{p}+\frac{p}{q}+7+\frac{7}{q}}D_p(X)\tfrac{q}{p},
\end{align}
for all $q\in(0,\infty)$.
\end{thm}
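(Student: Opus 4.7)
The strategy is to derive \eqref{decoup1q} from Proposition \ref{p:allp2} by verifying its BMO-type hypothesis \eqref{pallpass} from the assumed decoupling inequality \eqref{decoup}, thereby obtaining the $\Phi$-moment estimate first for $f\in\mathcal{D}_\infty$, then for adapted sequences with $\F_{n-1}$-conditionally symmetric difference sequences (via truncation), and finally for arbitrary adapted sequences (via Lemma \ref{l:condsym}).

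To verify \eqref{pallpass}, fix $f\in\mathcal{D}_\infty$, $0\leq k\leq l$, and $B\in\F_k$ with $\P(B)>0$. Since $B\in\F_{n-1}$ for every $n>k$, the sequence $(\one_B d_n)_{n=k+1}^l$ is $(\F_n)$-adapted, and by Lemma \ref{lem:preddcoupl} (applied to $h_n(x,s)=sx$ with the $\F_{n-1}$-measurable $\{0,1\}$-valued weight $v_n=\one_B$) it admits $(\one_B e_n)_{n=k+1}^l$ as an $\F_\infty$-decoupled tangent sequence. Chebyshev's inequality followed by the decoupling hypothesis \eqref{decoup} then yields
\begin{align*}
\P(B)\,\P\bigl(\n{}^k\!\!f^l\n>A\n T_p({}^k\!\!f^l)\n_\infty\mid B\bigr)
&\leq A^{-p}\n T_p({}^k\!\!f^l)\n_\infty^{-p}\,\E\Big\n\sum_{n=k+1}^l \one_B d_n\Big\n^p \\
&\leq D_p^p A^{-p}\n T_p({}^k\!\!f^l)\n_\infty^{-p}\,\E\bigl[\one_B\, T_p({}^k\!\!f^l)^p\bigr] \\
&\leq D_p^p A^{-p}\,\P(B),
\end{align*}
where the middle step uses the identity $\E\bigl[\n\sum_n\one_B e_n\n^p\,\big|\,\F_\infty\bigr]=\one_B T_p({}^k\!\!f^l)^p$ (since $\one_B$ is $\F_\infty$-measurable) and the last step uses $T_p({}^k\!\!f^l)\leq\n T_p({}^k\!\!f^l)\n_\infty$ a.s. Hence \eqref{pallpass} holds with $b=(D_p/A)^p$, which we push below the critical threshold $2^{-2p/\varrho+p-1}$ of Proposition \ref{p:allp2} by taking $A$ to be a suitable constant multiple of $D_p$. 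Proposition \ref{p:allp2} thus delivers $\E\Phi(f^*)\leq C_{X,r,p,q}\E\Phi(\n g\n)$ for every $f\in\mathcal{D}_\infty$.

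To extend this to a general $(\F_n)$-adapted sequence with $\F_{n-1}$-conditionally symmetric differences, I truncate by the predictable $\{0,1\}$-valued weights $v_k^M:=\one_{\{\max_{j<k}\n d_j\n\leq M\}}$: stopping at the fixed index $n$ and applying Lemma \ref{lem:preddcoupl} places the truncated sequence in $\mathcal{D}_\infty$ with decoupled sum sequence $\sum_{k\leq n}v_k^M e_k$. As $M\to\infty$ the truncations converge a.s., Fatou's lemma controls the left-hand side, and Corollary \ref{cor:kahanecontractionalmost} applied conditionally (together with the $F_q$-inequality for $\Phi$) provides a uniform control $\E\Phi(\n\sum_k v_k^M e_k\n)\leq 2^{(1/r-1)q+1}\E\Phi(\n g_n\n)$. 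This yields $\E\Phi(f_n^*)\leq C'\E\Phi(\n g_n\n)$ with $C'$ of the same order as $C_{X,r,p,q}$, from which both inequalities in \eqref{decoup1q} follow using $\n f_n\n\leq f_n^*$ and $\n g_n\n\leq g_n^*$. Finally, Lemma \ref{l:condsym} removes the conditional-symmetry assumption at the cost of the extra factor $2^{q/r}(2^{1+q/r}C'+1)$.

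For the quantitative bounds \eqref{decoupconst}--\eqref{eq:geissest} in the Banach setting ($r=\varrho=1$, $p\geq 1$), the desired $(q/p)^q$ dependence is engineered by a near-critical choice of $b$. Setting $b=2^{-p-1}(1-p/q)^q$, the factor $\beta/\delta=(2A+1)[1-(2^{p+1}b)^{1/q}]^{-1}$ entering $C_{X,1,p,q}$ becomes $(2A+1)\,q/p$; simultaneously $b^{-1/p}\leq 2^{(p+1)/p}e$ since $(1-p/q)^{-q/p}\leq e$, which permits $A$ to be taken of size $\sim e\,D_p$. Substituting these into the expression \eqref{decoup_const} for $C_{X,1,p,q}$ and tracking powers of $2$, $D_p$, and $q/p$ (including the additional factors introduced by Lemma \ref{l:condsym} and Corollary \ref{cor:kahanecontractionalmost}) yields \eqref{decoupconst}; taking $\Phi(x)=x^q$ and extracting the $q$-th root then gives \eqref{eq:geissest}. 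The main technical point is this boundary optimization of $b$, which converts the blow-up of $C_{X,1,p,q}$ as $b\uparrow 2^{-p-1}$ into the claimed linear-in-$q/p$ dependence.
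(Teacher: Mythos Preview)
Your overall architecture matches the paper's proof exactly: verify \eqref{pallpass} by Chebyshev and the $p$-decoupling inequality, feed this into Proposition~\ref{p:allp2} on $\mathcal{D}_\infty$, extend by truncation and Fatou, then drop conditional symmetry via Lemma~\ref{l:condsym}. Two concrete problems remain, however.

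First, your truncation weights $v_k^M=\one_{\{\max_{j<k}\n d_j\n\le M\}}$ are predictable but do not bound $d_k$ itself (for $k=1$ you even get $v_1^M d_1=d_1$), so the resulting sequence is \emph{not} uniformly bounded and hence is not in $\mathcal{D}_\infty$; Proposition~\ref{p:allp2} therefore does not apply to it. The paper truncates instead with $d_k\one_{\{\n d_k\n\le j\}}$ (and correspondingly $e_k\one_{\{\n e_k\n\le j\}}$, via Lemma~\ref{lem:preddcoupl} applied to $h(x)=x\one_{\{\n x\n\le j\}}$), which does yield a bounded, finite, conditionally symmetric sequence in $\mathcal{D}_\infty$; Fatou on the left and Corollary~\ref{cor:kahanecontractionalmost} on the right then close the argument exactly as you outlined.

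Second, your choice $b=2^{-p-1}(1-p/q)^q$ is not admissible: for $q\le p$ it is non-positive, and for $q>p$ the inequality you invoke is reversed, since $(1-x)^{-1/x}\ge e$ for $x\in(0,1)$, so $b^{-1/p}=2^{(p+1)/p}(1-p/q)^{-q/p}\ge 2^{(p+1)/p}e$ and in fact blows up as $q\downarrow p$. The paper takes $b=2^{-p-1}(1+p/q)^{-q}$, which is positive for every $q>0$ and gives $(1+p/q)^{q/p}\le e$, hence $A=b^{-1/p}D_p(X)\le 2^{1+1/p}e\,D_p(X)$; with this $b$ one has $[2^{p+1}b]^{1/q}=(1+p/q)^{-1}$, so the $[1-(2^{p+1}b)^{1/q}]^{-1}$ factor becomes $1+q/p$, producing the $(q/p)^q$ dependence in \eqref{decoupconst}. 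With these two fixes your argument is the paper's.
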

We interpret \eqref{decoup1q} in the sense that the left-hand side is finite whenever the right-hand side is so.
\begin{proof}
By assumption the decoupling inequality holds in the $r$-normable quasi-Banach space $X$ for some $p\in (0,\infty)$. Lemma \ref{l:condsym} states the following: If there exists a constant $C_{X,r,p,q}$ such that for every complete probability space $(\O,(\mathcal{F}_n)_{n\geq 1},\mathcal{A},\P)$ and every $(\mathcal{F}_n)_{n\geq 1}$-adapted $f$ for which $d_n$ is $\F_{n-1}$-conditionally symmetric for all $n\geq 1$ one has:
\begin{align}\label{Tallp}
\E \Phi(f^*) \leq C_{X,r,p,q} \E \Phi(\Vert g \Vert ),
\end{align}
where $g$ is a decoupled sum sequence of $f$, then \eqref{decoup1q} holds with
\begin{align}\label{Kdecoupconst}
K_{X,p,q}=K_{X,r,p,q}&\leq 2^{\frac{q}{r}}(2^{1+\frac{q}{r}}C_{X,r,p,q} +1).
\end{align}
Fix a complete probability space $(\Omega,(\F_n)_{n\geq 1},\mathcal{A},\P)$. We wish to apply Proposition \ref{p:allp2}; i.e.\ we wish to prove that assumption \ref{pallpass} is satisfied for $b\in (0,2^{-\frac{2p}{\varrho}+p-1})$ and some $A>0$ (independent of the probability space), where $\varrho=\min\{r,p\}$. Let $(f_n)_{n\geq 1}\in \mathcal{D}_{\infty}$ where $\mathcal{D}_{\infty}$ is as defined on page \pageref{Minf}, and let $g$ be a decoupled sum sequence of $f$ on $(\Omega,\mathcal{A},\P)$. Pick $0\leq k\leq l$ and let $B\in \F_k$. Observe that $T_p( ^{k}\!\!f^{l}\one_{B})=T_p( ^{k}\!\!f^{l}) \one_{B}$. By applying Chebyshev's inequality in the final line we obtain:
\begin{equation}\label{eq:bmocheck}
\begin{aligned}
\mathbb{P}(\{\Vert ^{k}\!\!f^{l} \Vert > A \Vert T_p(^{k}\!\!f^{l})\Vert_{\infty}\} \cap B) & =\mathbb{P}(\Vert ^{k}\!\!f^{l} \one_{B} \Vert > A \Vert T_p(^{k}\!\!f^{l})\Vert_{\infty}\one_{B}) \\
& \leq \mathbb{P}(\Vert ^{k}\!\!f^{l} \one_{B} \Vert > A \Vert T_p(^{k}\!\!f^{l})\one_{B} \Vert_{\infty}) \\
& \leq A^{-p}\Vert T_p(^{k}\!\!f^{l})\one_{B}\Vert_{\infty}^{-p} \Vert ^{k}\!\!f^{l} \one_{B} \Vert^{p}_{p}.
\end{aligned}
\end{equation}
By Lemma \ref{lem:preddcoupl} we have that $(^{k}\!\!g^{l}_n \one_{B})_{n\geq 1}$ is a decoupled sum sequence of $(^{k}\!\!f_n^{l}\one_{B})_{n\geq 1}$. Thus, because the decoupling inequality holds in $X$ for $p$, we have:
\begin{equation}\label{eq:decoupp}
\begin{aligned}
\Vert ^{k}\!\!f^{l} \one_{B} \Vert_{p}^{p} &\leq D_{p,X}^{p}\n ^{k}\!\!g^{l}_n \one_{B} \n_{p}^{p} = D_{p,X}^{p}\n T_p(^{k}\!\!f^{l}_n \one_{B}) \n^{p}_{p}\\
& = D_{p,X}^{p}\n T_p(^{k}\!\!f^{l}_n \one_{B}) \one_{B} \n_{p}^{p} \leq D_{p,X}^{p}\n T_p(^{k}\!\!f^{l}_n \one_{B}) \n_{\infty}^{p}\P(B).
\end{aligned}
\end{equation}
By picking $b\in (0,2^{-\frac{2p}{\varrho}+p-1})$ and setting $A = b^{-\frac{1}{p}} D_p(X)$ one obtains:
\begin{align*}
\mathbb{P}(\{\Vert ^{k}\!\!f^{l} \Vert > A \Vert T_p(^{k}\!\!f^{l})\Vert_{\infty}\} \cap B) &\leq b\P(B).
\end{align*}
Thus condition \eqref{pallpass} in Proposition \ref{p:allp2} is
satisfied, and therefore \eqref{Tallp} holds for all $f\in \mathcal{D}_{\infty}$ with a constant $C_{X,p,q,r}$ as given in that proposition. For general $(\F_n)_{n\geq 1}$-adapted sequences $(f_n)_{n\geq 1}$ with decoupled sum sequence $g$ defined on $(\O,\mathcal{A},\P)$ such that $d_n$ is $\F_{n-1}$-conditionally symmetric for all $n\geq 1$, we can reduce to the former case as follows:
\begin{equation}\label{eq:fatou}\begin{aligned}
\E \Phi(f^*)  &\stackrel{(i)}{\leq}  \liminf_{j\to \infty} \E \Phi \Big(\sup_{n\geq 1} \Big\Vert \sum_{k=1}^{n\maxsym j} d_k \one_{\|d_k\|\leq j} \Big\Vert \Big) \notag \\ & \stackrel{(ii)}{\leq} C_{X,p,q,r} \liminf_{j\to \infty} \E \Phi \Big(\Big\Vert \sum_{k=1}^{j} e_k \one_{\|e_k\|\leq j}
\Big\Vert\Big) \stackrel{(iii)}{\leq} 2^{\frac{q}{r}-q+1} C_{X,p,q,r}  \E \Phi \Vert g \Vert.
\end{aligned}\end{equation}
In (i) we used Fatou's lemma.  We applied \eqref{Tallp} in (ii), where we use that by Lemma \ref{lem:preddcoupl} $(e_k\one_{\|e_k\|\leq j})_{k=1}^n$ is a $\F_{\infty}$-conditionally symmetric decoupled tangent sequence of $(d_k\one_{\|d_k\|\leq j})_{k=1}^n$. In (iii) we used Corollary \ref{cor:kahanecontractionalmost} as in \eqref{phiKahane}.\par

We have thus proven that \eqref{Tallp} holds for an arbitrary yet fixed complete probability space, with a constant $C_{X,r,p,q}$ independent of the probability space. This completes the proof of inequality \eqref{decoup1q}.\par
In order to obtain the estimate on the constant in the decoupling inequality as provided by equation \eqref{decoupconst} it is necessary to optimize our choice of $b$. If $r=1$ and $p\geq 1$ then one can pick $b= 2^{-p-1}(1+\frac{p}{q})^{-q}$, whence $$A=2^{1+\inv{p}}(1+\tfrac{p}{q})^{\frac{q}{p}} D_p(X)\leq e2^{1+\inv{p}} D_p(X).$$ Entering this in equation \eqref{decoup_const} in Proposition \ref{p:allp2} leads to the following estimate for the constant in \eqref{Tallp}:
\begin{align}\label{eq:Cpq}
C_{X,1,p,q}\leq 2^{2q +2}\Big[ 2^{p+q+1}(2^{q}+1) \left[e2^{2+\inv{p}} D_p(X)  +1\right]^{q}\big(\tfrac{q}{p}\big)^{q} + 1\Big],
\end{align}
which, in combination with \eqref{Kdecoupconst} and some rough estimates, leads to equation \eqref{decoupconst}.\par
It may be shown that with the proof as provided here the order of the constant in equation \eqref{decoupconst} (in terms of $q$) may not be improved by choosing a different value for $b$. Moreover, one may show that a good choice for $b$ if $\varrho=\min\{r,p\}<1$ is given by $b= 2^{-\frac{2p}{r}+p-1}(1+\frac{p}{q})^{-\frac{q}{r}}$.
\end{proof}

From the proof above we obtain a somewhat stronger result, i.e.\ a maximal
inequality for conditionally symmetric adapted sequences:
\begin{corol}\label{c:allp}
Let $X$ be a Banach space in which the decoupling inequality \eqref{decoup}
holds for some $p\in (0,\infty)$, then for every $\Phi$ as defined in the introduction and every $(\F_n)_{n\geq 1}$-adapted
$X$-valued sequence $(f_n)_{n\geq 1}$ such that $d_n$ is $\F_{n-1}$-conditionally symmetric for all $n\geq 1$ one has:
\begin{align*}
\E \Phi ( f^*_n) \leq C_{X,r,p,q} \E \Phi (g_n), \ \ n\geq 1,
\end{align*}
where $g$ is a decoupled sum sequence of $f$ and $C_{X,r,p,q}$ is as given in \eqref{eq:Cpq}.
\end{corol}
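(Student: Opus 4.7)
The plan is to observe that the estimate already holds, in the special case $f \in \mathcal{D}_{\infty}$, by the intermediate step \eqref{Tallp} established inside the proof of Theorem \ref{t:allpq}. Indeed, that proof verifies the BMO-type hypothesis \eqref{pallpass} of Proposition \ref{p:allp2} via the Chebyshev bound \eqref{eq:bmocheck}--\eqref{eq:decoupp}, with $A = b^{-1/p}D_p(X)$, and hence yields
\[
\E\Phi(f^*) \leq C_{X,r,p,q}\,\E\Phi(\n g\n)
\]
for every $f\in \mathcal{D}_{\infty}$, with constant as in \eqref{eq:Cpq}. What remains is to lift this from $\mathcal{D}_{\infty}$ (bounded, eventually-vanishing adapted sequences with conditionally symmetric differences) to an arbitrary $(\F_n)$-adapted sequence $(f_n)$ with $\F_{n-1}$-conditionally symmetric differences, up to a fixed horizon $n$.

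For this I would reuse the truncation-and-Fatou template from \eqref{eq:fatou}. Fix $n\geq 1$ and set
\[
d_k^{(j)} := d_k\,\one_{\{\n d_k\n\leq j\}}\,\one_{\{k\leq n\}}, \qquad f_k^{(j)} := \sum_{i=1}^k d_i^{(j)}.
\]
Applying Lemma \ref{lem:preddcoupl} to the odd function $h_k(x,s) := x\,\one_{\{\n x\n\leq s\}}$ with the $(\F_n)$-predictable scalar sequence $v_k := j\,\one_{\{k\leq n\}}$ shows simultaneously that $(d_k^{(j)})$ is $\F_{k-1}$-conditionally symmetric and that $e_k^{(j)} := e_k\,\one_{\{\n e_k\n\leq j\}}\,\one_{\{k\leq n\}}$ defines a $\F_{\infty}$-decoupled tangent sequence of $(d_k^{(j)})$. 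Since $f^{(j)}$ is bounded and has vanishing differences for $k > n$, we have $f^{(j)}\in \mathcal{D}_{\infty}$, and the previous display applies to $f^{(j)}$.

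Finally I would pass to the limit $j\to\infty$. On the left, $\sup_k \n f_k^{(j)}\n \nearrow f_n^*$ almost surely, so continuity of $\Phi$ together with Fatou's lemma provides $\E\Phi(f_n^*)$ as a lower bound. On the right, the indicator factors $\one_{\{\n e_i\n\leq j\}}$ are removed via the conditional contraction principle of Corollary \ref{cor:kahanecontractionalmost}, applied conditionally on $\F_\infty$ exactly as in \eqref{phiKahane}, yielding a bound by a fixed multiple of $\E\Phi(\n g_n\n)$ uniformly in $j$. The main obstacle here is purely bookkeeping: checking that the resulting constant is dominated by the announced $C_{X,r,p,q}$ of \eqref{eq:Cpq}. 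This is immediate, however, because relative to the proof of Theorem \ref{t:allpq} the present (symmetric) setting saves the symmetrisation step of Lemma \ref{l:condsym}, while picking up exactly the same Kahane contraction factor, which is already built into the constant produced by Proposition \ref{p:allp2}.
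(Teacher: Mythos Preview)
Your proposal is correct and follows exactly the route the paper intends: the Corollary is stated immediately after Theorem \ref{t:allpq} with the remark ``From the proof above we obtain\ldots'', and indeed the proof of that theorem already establishes \eqref{Tallp} for all conditionally symmetric adapted sequences via precisely the two steps you describe (Proposition \ref{p:allp2} on $\mathcal{D}_\infty$, then the truncation-and-Fatou passage \eqref{eq:fatou}). One small correction on the bookkeeping: the Kahane contraction factor $2^{q/r-q+1}$ incurred in step (iii) of \eqref{eq:fatou} is \emph{in addition} to the one already inside Proposition \ref{p:allp2}, so the final constant is $2^{q/r-q+1}C_{X,r,p,q}$ rather than $C_{X,r,p,q}$ itself; the paper glosses over this (the bound \eqref{eq:Cpq} is stated as an inequality obtained after ``rough estimates''), and for the Banach-space case $r=1$ the discrepancy is a harmless factor of $2$.
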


\begin{remark}
From the proof of Theorem \ref{t:allpq} it follows that in order to check
whether a \mbox{(quasi-)}Banach space satisfies the decoupling inequality it suffices to
check whether the following weak estimate holds: for some $p\in (0,\infty)$
and some $b\in(0,1)$ there exists an $A=A(b,X,r,p)$ such that
\begin{align*}
\sup_{f\in \mathcal{D}_{\infty}}\sup_{0\leq k\leq l}\sup_{B\in \F_k, B\neq \varnothing}\P(\n ^{k}\!\!f^{l} \n \geq A\n T_p(^{k}\!\!f^{l})\n_{\infty}\,|\, B)\leq b.
\end{align*}
After all, if this holds for some $b\in (0,1)$, there will be a $p\in (0,\infty)$ such that $b\leq 2^{-\frac{2p}{r}-1}$. We then take $\Phi=x^p$ in Proposition \ref{p:allp2} (i.e.\ $q=p$) and obtain that \eqref{Tallp} holds for $f\in \mathcal{D}_{\infty}$ on a arbitrary yet fixed complete probability space $(\O,(\F_n)_{n\geq 1},\mathcal{A},\P)$. By the same arguments as in the proof of Theorem \ref{t:allpq} above we find that the decoupling inequality holds in $p$ for $X$, and thus, by Theorem \ref{t:allpq}, $X$ is a Banach space for which the decoupling inequality holds.
\end{remark}

\begin{corol}\label{cor:umd}
If $X$ is a \umd space, then the decoupling inequality holds.
\end{corol}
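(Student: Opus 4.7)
The plan is to pass from the UMD property to the decoupling inequality in the sense of Definition \ref{def:dec} for some $p\in(1,\infty)$, and then invoke Theorem \ref{t:allpq} to obtain it for all $p\in(0,\infty)$.

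First, I would recall the Hitczenko--McConnell equivalence mentioned just before \eqref{decoup-2}: if $X$ is \textsc{umd} then for every $p\in(1,\infty)$ the second inequality in \eqref{decoup-2} holds with constant at most $\beta_p(X)<\infty$, i.e.\ $\|f_n\|_p\leq \beta_p(X)\|g_n\|_p$ for every $X$-valued $L^p$-martingale $f$ adapted to some filtration $(\F_n)_{n\geq 1}$ and any decoupled sum sequence $g$ of $f$. Fixing some $p\in(1,\infty)$, this gives the martingale-version of \eqref{decoup} for $X$.

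Next I would remove the martingale restriction. As pointed out after \eqref{decoup-2}, any $(\F_n)_{n\geq 1}$-adapted $L^p$-sequence $f$ whose differences $d_n=f_n-f_{n-1}$ are $\F_{n-1}$-conditionally symmetric is automatically a martingale, so the previous step yields \eqref{decoup} for all such conditionally symmetric adapted sequences with constant $\beta_p(X)$. Taking $\Phi(s)=s^p$ (which satisfies \eqref{qPhi} with $q=p$), Lemma \ref{l:condsym} then upgrades this to the inequality $\E\|f_n\|^p\leq K\E\|g_n\|^p$ for all $(\F_n)_{n\geq 1}$-adapted sequences $f$ with decoupled sum sequence $g$, where $K\leq 2^{p}(2^{1+p}\beta_p(X)^p+1)$. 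Thus $X$ satisfies the decoupling inequality \eqref{decoup} for this $p$.

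Finally, Theorem \ref{t:allpq} applied with $r=1$ shows that the decoupling inequality then holds in $X$ for all $q\in(0,\infty)$, which is the conclusion. The only conceptual step is the interplay between the martingale formulation \eqref{decoup-2} supplied by the UMD hypothesis and the adapted formulation required in Definition \ref{def:dec}; this is handled cleanly by the chain Hitczenko--McConnell $\Rightarrow$ Lemma \ref{l:condsym} $\Rightarrow$ Theorem \ref{t:allpq}, so there is no real obstacle beyond correctly bookkeeping the constants.
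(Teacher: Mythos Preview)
Your proposal is correct and follows essentially the same route as the paper: invoke the Hitczenko--McConnell result to get the martingale form of \eqref{decoup-2}, observe that conditionally symmetric $L^p$-sequences are martingales, apply Lemma \ref{l:condsym} to pass to arbitrary adapted sequences, and finish with Theorem \ref{t:allpq}. The paper compresses this into a single sentence citing Lemma \ref{l:condsym} and Theorem \ref{t:allpq}, but the logical chain is identical to yours.
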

\begin{proof}
As explained in the introduction, if $X$ is a \textsc{umd} space then
\eqref{decoup-2} holds for all martingale difference sequences and for all
$p\in (1, \infty)$. Therefore, by Lemma \ref{l:condsym} and Theorem \ref{t:allpq} every \textsc{umd}
space satisfies the decoupling inequality.
\end{proof}

The lemma below implies that the decoupling property is a super-property: if $X$ is a quasi-Banach space satisfying the decoupling inequality and $Y$ is a quasi-Banach space that is finitely representable in $X$, then $Y$ satisfies the decoupling inequality and $D_p(Y)\leq D_p(X)$, $p\in (0,\infty)$. For the definition of finite representability we refer to \cite{AlbiacKalton}.

\begin{lemma}\label{lem:local}
A quasi-Banach space $X$ satisfies the decoupling inequality in $p\in(0,\infty)$ with constant $D_p(X)$ if and only if \eqref{decoup} in Definition \ref{def:dec} holds with constant $D_p(X)$ for every finitely-valued $X$-valued $(\F_n)_{n\geq1}$-adapted finite sequence $f=(f_k)_{k=1}^{n}$, for any probability space $(\O,\mathcal{A},(\F_{n})_{n\geq 1},\P)$. \par
\end{lemma}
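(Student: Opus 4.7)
The forward direction is immediate: a finitely-valued, $(\F_n)_{n\geq 1}$-adapted finite sequence is in particular an adapted $L^p$-sequence, so the general decoupling inequality specialises to it with the same constant.

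For the converse, fix a complete probability space, an $(\F_n)_{n\geq 1}$-adapted $L^p$-sequence $f=(f_n)_{n\geq 1}$ with differences $d_n$, a $\F_{\infty}$-decoupled sum sequence $g$ of $f$, and an index $n\geq 1$; I must establish $\|f_n\|_p\leq D_p(X)\|g_n\|_p$. Since each $d_k$ is $\F_k$-strongly measurable and $p$-Bochner integrable, pick $\F_k$-measurable, finitely-valued approximants $d_k^{(m)}\to d_k$ in $L^p(\O;X)$. The partial sums $f_k^{(m)}:=\sum_{j=1}^k d_j^{(m)}$ form an adapted, finitely-valued, finite sequence of length $n$, so the hypothesis yields
\[
\|f_n^{(m)}\|_p \;\leq\; D_p(X)\,\|g_n^{(m)}\|_p
\]
for any $\F_{\infty}$-decoupled sum sequence $g^{(m)}$ of $f^{(m)}$ on any probability space on which such a sequence is defined.

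The technical heart is to construct the decoupled tangent sequences $(e_k)_{k=1}^n$ of $(d_k)_{k=1}^n$ and $(e_k^{(m)})_{k=1}^n$ of $(d_k^{(m)})_{k=1}^n$ on a common enlargement $(\tilde\O,\tilde{\mathcal A},\tilde\P)$ of $(\O,\mathcal A,\P)$, coupled so that $e_k^{(m)}\to e_k$ in $L^p(\tilde\O;X)$. Adjoin independent $[0,1]$-uniforms $U_1,\ldots,U_n$ to $\O$, independent of $\F_{\infty}$. Let $\mu_k(\omega,\cdot)$ and $\mu_k^{(m)}(\omega,\cdot)$ be the regular conditional laws of $d_k$ and $d_k^{(m)}$ given $\F_{k-1}$. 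Since $X$ is Polish under the $r$-metric $d(x,y)=\|x-y\|^r$, a standard measurable selection theorem produces, measurably in $\omega$, a coupling $\pi_k(\omega,\cdot)$ of $\mu_k^{(m)}(\omega,\cdot)$ and $\mu_k(\omega,\cdot)$ minimising $\int\|x-y\|^p\,\pi(dx,dy)$. Using $U_k$ and a Borel isomorphism $[0,1]\cong X\times X$, realise $(e_k^{(m)},e_k)$ with conditional joint law $\pi_k$ given $\F_{k-1}$. The independence of the $U_k$ across $k$ gives the conditional independence of $(e_k)_{k=1}^n$ and of $(e_k^{(m)})_{k=1}^n$ given the lifted $\F_{\infty}$, so both are bona fide decoupled tangent sequences (verifying, via the same computation, that $\P(e_k\in B\,|\,\tilde\F_{\infty})=\mu_k(\omega,B)=\P(d_k\in B\,|\,\F_{k-1})$).

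Since $(d_k^{(m)},d_k)$ is itself a coupling of $(\mu_k^{(m)}(\omega,\cdot),\mu_k(\omega,\cdot))$ on a.e.\ conditional fibre, optimality of $\pi_k$ yields
\[
\tilde\E\,\|e_k^{(m)}-e_k\|^p \;\leq\; \E\,\E\big[\|d_k^{(m)}-d_k\|^p\,\big|\,\F_{k-1}\big] \;=\; \|d_k^{(m)}-d_k\|_p^p \;\longrightarrow\; 0.
\]
Since $L^p(\tilde\O;X)$ is $\min\{r,p\}$-normable, summing gives $g_n^{(m)}=\sum_{k=1}^n e_k^{(m)}\to\sum_{k=1}^n e_k=:\tilde g_n$ in $L^p(\tilde\O;X)$, hence $\|g_n^{(m)}\|_p\to\|\tilde g_n\|_p=\|g_n\|_p$ (any two decoupled sum sequences of $f$ share their law). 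Combined with $f_n^{(m)}\to f_n$ in $L^p$ on the lifted space, passage to the limit in the displayed inequality yields $\|f_n\|_p\leq D_p(X)\|g_n\|_p$. The main obstacle is the joint measurability of the optimal-coupling selection $\omega\mapsto\pi_k(\omega,\cdot)$, which is handled by the Kuratowski and Ryll-Nardzewski selection theorem applied to the $\F_{k-1}$-measurable multifunction of optimal couplings on the Polish space of probability measures on $X\times X$.
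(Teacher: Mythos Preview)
Your argument is correct, but it is considerably more elaborate than the paper's. The paper avoids the whole coupling/selection machinery by choosing the finitely-valued approximants in a very particular way: it builds a single Borel map $\psi_m:X\to\{x_1,\dots,x_m\}$ (nearest-point projection onto a finite subset of a dense sequence, with $\|\psi_m(x)\|\leq\|x\|$ and $\psi_m(x)\to x$) and sets $d_k^{(m)}:=\psi_m(d_k)$. The point is that Lemma~\ref{lem:preddcoupl} then gives \emph{for free} that $(\psi_m(e_k))_k$ is a decoupled tangent sequence of $(\psi_m(d_k))_k$, so the decoupled side converges by dominated convergence just as the original side does, and one passes to the limit immediately.

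Your route---taking arbitrary simple approximants $d_k^{(m)}$ and then manufacturing coupled decoupled tangent sequences $(e_k^{(m)},e_k)$ via measurably selected Wasserstein-optimal couplings and a randomization lemma---does work, but it imports nontrivial machinery (Kuratowski--Ryll-Nardzewski selection, existence of measurable optimal transport plans, noise-outsourcing) precisely to recover something the paper gets by choosing $d_k^{(m)}$ as a Borel function of $d_k$. One small point: you assert ``$X$ is Polish under the $r$-metric'', but $X$ need not be separable; you should first pass, as the paper does, to a separable closed subspace containing the ranges of all $d_k$ and $e_k$. With that adjustment your argument is sound; it just buys nothing extra here, whereas the paper's trick of pushing the approximation through a deterministic Borel map is both shorter and makes the appeal to Lemma~\ref{lem:preddcoupl} the only structural step.
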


\begin{proof}
Fix $p\in (0,\infty)$. It is clear from the definition that it suffices to consider finite sequences. Let $(\O,\mathcal{A},(\F_{n})_{n\geq 1},\P)$ be a probability space and let $(f_k)_{k=1}^{n}$ be a $X$-valued, $(\F_{k})_{k= 1}^n$-adapted $L^p$-sequence, and let $(g_k)_{k=1}^{n}$ be the decoupled sum sequence of $(f_k)_{k=1}^{n}$. By strong measurability we may assume that $(f_k)_{k=1}^{n}$ and $(g_k)_{k=1}^{n}$ take values in a separable subspace $X_0\subseteq X$. Let $(x_n)_{n\geq 1}$ be a dense subset of $X_0$ such that $x_1=0$.
For $m\in \N$ we define $\phi_m:X\rightarrow \R$ by
$$ \phi_m(x) = \min_{1\leq n\leq m}\{ \n x-x_n\n\,:\,\n x_n \n\leq \n x\n\}. $$
For $n,m\in \N$, $n\leq m$ define $E_{n,m}:=\{x\in X\,:\, \n x-x_n\n=\phi_m(x)\}.$ Define $\psi_m:X\rightarrow \{x_1,\ldots,x_m\}$ by
\begin{align*}
\psi_m(x)& = x_n; \qquad x\in E_{n,m}\setminus \bigcup_{j=1}^{n-1} E_{j,m}.
\end{align*}
Clearly, $\psi_m$ is $\mathcal{B}(X)$-measurable.
Moreover, for all $x\in X$ one has $\n \psi_m(x) - x\n\rightarrow 0$ as $m\rightarrow \infty$, and $\psi_m(x)\leq \|x\|$. Thus by the dominated convergence theorem we have $\psi_m(f_k)\rightarrow f_k$ and $\psi_m(g_k)\rightarrow g_k$ in $L^p(X)$, for all $k=1,\ldots,n$. By Lemma \ref{lem:preddcoupl}, $\psi_m(g_k)$ is the decoupled sum sequence for $\psi_m(f_k)$ for all $m\in \N$, so if \eqref{decoup} holds for the pairs $\psi_m(f_k)$ and $\psi_m(g_k)$ for all $m$ with some constant $D_p$, then it also holds for $(f_k)_{k=1}^{n}$ and $(g_k)_{k=1}^{n}$ with the same constant.
\end{proof}

\begin{corol}\label{cor:LqY}
Let $Y$ be a space for which the decoupling inequality \eqref{decoup} holds.
Let $(S,\Sigma,\mu)$ be a nonzero measure space and let $q\in (0,
\infty)$. Then $X=L^q(S;Y)$ satisfies the decoupling inequality. Moreover,
$D_p(L^p(S;Y)) = D_p(Y)$.
\end{corol}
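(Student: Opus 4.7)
The plan is to first establish the decoupling inequality in $X=L^q(S;Y)$ at the single exponent $p=q$, and then invoke Theorem~\ref{t:allpq} to extrapolate to all other exponents. By Lemma~\ref{lem:local} I may restrict attention to finitely-valued adapted sequences throughout, which sidesteps most measurability issues: if $f=(f_n)$ is finitely-valued and $g=(g_n)$ is its decoupled sum sequence, then $g$ is essentially finitely-valued as well (the two sequences have the same conditional laws).

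For the base case $p=q$, let $f$ be an $L^q(S;Y)$-valued finitely-valued $(\F_n)$-adapted sequence on $(\O,\mathcal{A},\P)$ with decoupled sum sequence $g$. Fubini gives
\[
\E\n f_n\n_{L^q(S;Y)}^q = \int_S \E\n f_n(s)\n_Y^q \, d\mu(s),
\]
and the analogous identity for $g$. The crux is the pointwise claim: for $\mu$-almost every $s\in S$, the $Y$-valued sequence $(g_n(s))_{n\geq 1}$ is an $\F_\infty$-decoupled sum sequence of $(f_n(s))_{n\geq 1}$ with respect to the same filtration $(\F_n)$. Granting this claim, the hypothesis that $Y$ satisfies the decoupling inequality at exponent $q$ yields $\E\n f_n(s)\n_Y^q \leq D_q(Y)^q \, \E\n g_n(s)\n_Y^q$ for a.e.\ $s$, and integrating over $s$ gives $D_q(L^q(S;Y)) \leq D_q(Y)$. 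Combined with Theorem~\ref{t:allpq}, this proves that $L^q(S;Y)$ satisfies the decoupling inequality.

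To verify the pointwise claim I would argue with regular conditional probabilities in the same spirit as Remark~\ref{rem:condistribution} and Lemma~\ref{lem:preddcoupl}. Since $f,g$ are finitely-valued, $d_n(s), e_n(s)$ take values in a fixed finite subset of $Y$ depending on $s$, so the Borel-measurability requirements in the definition of a decoupled tangent sequence reduce to finitely many identities of conditional probabilities. Starting from $\P(d_n\in A\mid\F_{n-1})=\P(e_n\in A\mid\F_\infty)$ for Borel $A\subseteq L^q(S;Y)$, one specialises to the cylinder sets $A=\pi_s^{-1}(B):=\{y\in L^q(S;Y): y(s)\in B\}$ with $B\subseteq Y$ Borel; the $\F_\infty$-conditional independence of $(e_n(s))_{n\geq 1}$ follows by taking products of such cylinders; and a Fubini argument pins down a common full-measure subset of $S$ on which both properties hold simultaneously.

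For the equality $D_p(L^p(S;Y))=D_p(Y)$, the inequality $\leq$ is the base case above with $q=p$. For the reverse inequality, the hypothesis that $\mu$ is nonzero provides a set $A\in\Sigma$ with $0<\mu(A)<\infty$. Given a $Y$-valued adapted sequence $(f_n)$ with decoupled sum sequence $(g_n)$, set
\[
\tilde f_n := \mu(A)^{-1/p}\, f_n \otimes \one_A \quad \text{and} \quad \tilde g_n := \mu(A)^{-1/p}\, g_n \otimes \one_A,
\]
both viewed as $L^p(S;Y)$-valued random variables. Lemma~\ref{lem:preddcoupl} (applied with $h_n(y,\cdot)=\mu(A)^{-1/p}\,y\otimes\one_A$ and a trivial predictable $v_n$) shows that $\tilde g$ is a decoupled sum sequence of $\tilde f$, and a direct Fubini computation gives $\n \tilde f_n\n_{L^p(\O;L^p(S;Y))}=\n f_n\n_{L^p(\O;Y)}$ together with the analogue for $\tilde g_n, g_n$. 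Thus the decoupling inequality in $L^p(S;Y)$ at exponent $p$ transfers to $Y$ with the same constant, yielding $D_p(Y)\leq D_p(L^p(S;Y))$. The main obstacle in the whole argument is the measure-theoretic pointwise decoupled-tangency claim; everything else is bookkeeping, and the reverse inequality in the equality statement is entirely routine.
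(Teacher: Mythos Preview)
Your proposal is correct and follows essentially the same approach as the paper: reduce to finitely-valued sequences via Lemma~\ref{lem:local}, establish the decoupling inequality at the single exponent $p=q$ by a Fubini argument and pointwise decoupled-tangency in $Y$, then extrapolate to all exponents via Theorem~\ref{t:allpq}. The paper's own proof is a terse reference to \cite[Theorem~14]{CoxVeraar} (where $q=1$ was treated) combined with Theorem~\ref{t:allpq}, so you have in effect reconstructed that referenced argument; one small imprecision is your use of the ``evaluation'' $\pi_s:L^q(S;Y)\to Y$, which is not well-defined on equivalence classes, but in the finitely-valued setting you can simply fix representatives and note that $\{d_n(s)\in B\}=\{d_n\in\{x_i:x_i(s)\in B\}\}$, which is the substance of what you wrote.
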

\begin{proof}[Proof of Corollary \ref{cor:LqY}]
By Lemma \ref{lem:local} it suffices to consider finite sequences taking values in a finite subset of $L^p(S;Y)$. Thus without loss of generality we may assume that $(S,\Sigma,\mu)$ is $\sigma$-finite. Then the proof follows from Theorem \ref{t:allpq} by the same method as in \cite[Theorem 14]{CoxVeraar}, where $q=1$ has been considered.
\end{proof}

In particular, we have the following examples.
\begin{example}\label{ex:Lp}
Let $(S_i, \Sigma_i, \mu_i)$ be a measure space and let
$q_i\in (0, \infty)$ for $1\leq i\leq n$. Let $X = L^{q_1}(S_1; L^{q_2}(S_2;
\ldots L^{q_n}(S_n)))$, then the decoupling inequality holds for $X$. Note
these spaces are not \umd spaces if $q_i\leq 1$ for some $i$.
\end{example}

\begin{example}
Let $(S, \Sigma)$ be a measurable space. Let $X$ be the space of bounded $\sigma$-additive measures on $(S, \Sigma)$ equipped by the variation norm. Then $X$ is a Banach lattice where $\mu_1\leq \mu_2$ if $\mu_1(A) \leq\mu_2(A)$ for all $A\in \Sigma$. Moreover, $X$ is an abstract $L^1$-space and hence by \cite[Theorem 4.27]{AlBu}, the decoupling property holds for $X$.
\end{example}

A consequence of Corollary \ref{cor:LqY} is the following result for Hilbert
spaces $X$.
\begin{corol}\label{cor:uniformH}
Let $X$ be a Hilbert space. Then for every $p\in[1,\infty]$ and every adapted $X$-valued $f$ in $L^p(\O;X)$ one has:
\begin{align*}
\Vert f_n \Vert_p \leq D_{\R} \Vert g_n \Vert_p,
\end{align*}
for all $n\in\mathbb{N}$, where $g$ is a decoupled sum sequence of $f$ and $D_{\R}$ as in \eqref{uniformR}.
\end{corol}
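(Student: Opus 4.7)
The plan is to reduce the claim to the scalar case $X=\R$, where Hitczenko's bound \eqref{uniformR} already supplies the universal constant $D_{\R}$. The key observation is that every Hilbert space is finitely representable (indeed isometrically embeddable) in $L^p(S)$ for each $p\in[1,\infty)$ via the standard Gaussian construction, so the locality property of the decoupling inequality transfers the bound $D_p(L^p(S))=D_p(\R)\leq D_{\R}$ from scalars to $H$.

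First I would fix $p\in[1,\infty)$ and use Lemma \ref{lem:local} in its finite-representability form (quoted in the introduction) to reduce the problem to showing $D_p(\ell^2_n)\leq D_{\R}$ uniformly in $n$; this suffices because every finite-dimensional subspace of a Hilbert space is isometric to some $\ell^2_n$. Next I would exhibit an isometric embedding $\ell^2_n\hookrightarrow L^p(\O')$ via independent standard Gaussians $\gamma_1,\ldots,\gamma_n$ on an auxiliary probability space $(\O',\mathcal{A}',\P')$: setting $c_p:=\n\gamma_1\n_{L^p}$, the map
$$ (a_1,\ldots,a_n)\longmapsto c_p^{-1}\sum_{i=1}^{n} a_i\gamma_i $$
is linear and isometric, since any linear combination of independent centered Gaussians is itself a centered Gaussian and one has $\bigl\Vert\sum_i a_i\gamma_i\bigr\Vert_{L^p}=c_p\bigl(\sum_i a_i^2\bigr)^{1/2}$. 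Lemma \ref{lem:local} then yields $D_p(\ell^2_n)\leq D_p(L^p(\O'))$.

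To close the chain I would invoke Corollary \ref{cor:LqY} with $Y=\R$ and $q=p$, which identifies $D_p(L^p(\O'))=D_p(\R)$, and then apply Hitczenko's uniform estimate \eqref{uniformR} to obtain $D_p(\R)\leq D_{\R}$. Concatenating these inequalities yields $D_p(H)\leq D_{\R}$ for every $p\in[1,\infty)$. For the endpoint $p=\infty$, I would pass to the limit $p\to\infty$ in the resulting inequality $\Vert f_n\Vert_p\leq D_{\R}\Vert g_n\Vert_p$, observing that both sides converge to the corresponding essential-supremum norms (by monotone convergence of $\Vert\cdot\Vert_p$ to $\Vert\cdot\Vert_\infty$ on a probability space) whenever the right-hand side is finite.

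I do not anticipate any serious obstacle: the Gaussian-embedding step is classical, and the remaining ingredients (Corollary \ref{cor:LqY}, Lemma \ref{lem:local}, and \eqref{uniformR}) are already available in the paper. The mildly delicate point is handling $p=\infty$, which reduces to a routine limiting argument, so the corollary is essentially a tidy repackaging of the scalar bound via the Hilbertian Gaussian structure and the $L^p$-transference identity.
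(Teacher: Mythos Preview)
Your proposal is correct and follows essentially the same route as the paper: both arguments embed the Hilbert space isometrically into $L^p$ via the standard Gaussian construction, then invoke Corollary~\ref{cor:LqY} to identify $D_p(L^p)=D_p(\R)$ and Hitczenko's bound \eqref{uniformR} to conclude. The paper packages this slightly more directly---it reduces to separable $X$, identifies $X$ with a subspace of $\ell^2$, and applies the isometric embedding $\ell^2\hookrightarrow L^p(0,1)$ straight to the given sequence $f$---whereas you pass through finite representability and Lemma~\ref{lem:local}; but this is only a cosmetic difference, and your explicit handling of the endpoint $p=\infty$ is a small bonus the paper leaves implicit.
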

Using this we prove a similar statement for estimates of type \eqref{decoup1q},
see inequality \eqref{uniformPhiH}.
Note that it has been proven that a Hilbert space $X$ satisfies the decoupling inequality in \cite[Corollary 6.4.3]{delaPG}, but it has not been proven that the constants $D_p(X)$ are uniformly bounded. It seems that the arguments of \cite{hitczenko:decoupineq}, \cite[Chapter 7]{delaPG} do {\em not} extend to the vector-valued situation and a different argument is used.
\begin{proof}
Let $X$ be a Hilbert space. Let $p\in [1,\infty)$ be given and let $f$ be an
adapted $X$-valued $L^p$-sequence. Because $f$ is strongly measurable we may
assume that $X$ is separable. As every separable Hilbert space is
isometrically isomorphic to a closed subspace of $\ell^2$ we may assume $f$ to
be an adapted $\ell^2$-valued $L^p$-sequence. It is known that $\ell^2$ embeds
isometrically in $L^p(0,1)$ for all $1\leq p < \infty$ (see \cite[Proposition
6.4.13]{AlbiacKalton}), let $J_p:\ell^2\rightarrow L^p(0,1)$ denote this
isometric embedding. Let $g$ be a decoupled sum sequence of $f$. Observe
that $J_p f$ is an adapted $L^p(0,1)$-valued $L^p$-sequence with decoupled sum
sequence $J_p g$. By equation \eqref{uniformR} in the introduction and
Corollary \ref{cor:LqY} it follows that, for all $n\geq 1$,
\begin{align*}
\Vert f_n \Vert_{L^p(\O;\ell^2)} = \Vert J_p f_n \Vert_{L^p(\O;L^p(0,1))} \leq
D_{\mathbb{R}} \Vert J_p g_n \Vert_{L^p(\O;L^p(0,1))} = D_{\mathbb{R}}\Vert
g_n \Vert_{L^p(\O;\ell^2)}.
\end{align*}
\end{proof}

\begin{remark}
We mention some direct consequences of Corollary \eqref{cor:uniformH}. Let $X$ be a Hilbert space.
\begin{enumerate}
\item Let $\Phi$ be fixed and $q$ as in \eqref{qPhi}. By Corollary \ref{cor:uniformH} we have $D_q(X)\leq D_{\R}$; using this and substituting $p=q$ in \eqref{decoupconst} we obtain:
\begin{align}\label{uniformPhiH}
\E \Phi(\n f \n) &\leq e^{q}2^{11+8q}[D_{\R}]^{q} \E \Phi(\n g\n),
\end{align}
for all $X$-valued sequences $f$. As mentioned in the introduction, this improves \cite[Corollary 6.4.3]{delaPG} where this estimate has been proven without giving a bound on the constant.
\item In \cite[Section 6]{hitczenko:decoupineq} it has been observed that if $D_p(X)$ is uniformly bounded in $p$ then using Taylor expansions one obtains estimates for $\E\Phi(\n f_n \n)$ even if $\Phi$ does not satisfy \eqref{qPhi}. This applies for example to the exponential function. I.e.\ by Corollary \ref{cor:uniformH} and Taylor expansions one has:
\begin{align*}
\E \exp(\n f_n \n) &\leq \E \exp( D_{\mathbb{R}} \n g_n \n),
\end{align*}
for all $X$-valued adapted sequences $f$. For the real case this estimate also follows for mean-zero sequences from a result in \cite[Section 6.2]{delaPG} (with constant $2$ instead of $D_{\mathbb{R}}$).
\end{enumerate}
\end{remark}

We conclude this section with some observations. In \cite[p.\
105]{Garling:RMTI} it has been proven that $c_0$ does not have the decoupling
property by proving that for any dimension $d$ one has $D_p(\ell^{\infty}_{(d)})\geq
4^{-1}K_{p,2}^{-1}[\tfrac{\log d}{\log 2}]^{\inv{2}}$ where $K_{p,2}$
is the optimal constant in the Kahane-Khintchine inequality.
We have the following upper estimate for $D_p(\ell^{\infty}_{(d)})$ for $p$ large:
\begin{corol}
Let $d\in\N$ and $p\geq \frac{\log d}{\log 2}$, then $D_p(\ell^{\infty}_{(d)})\leq 2 D_{\R}$.
\end{corol}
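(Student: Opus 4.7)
The plan is to exploit the coordinate structure of $\ell^{\infty}_{(d)}$, applying the scalar decoupling inequality component-wise and then comparing the $\ell^p$ and $\ell^{\infty}$ norms on $\R^d$ under the hypothesis $p\geq \log_2 d$.

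First I would fix an $(\F_n)_{n\geq 1}$-adapted sequence $f=(f_n)_{n\geq 1}$ in $L^p(\O;\ell^{\infty}_{(d)})$ with coordinates $f_n=(f_n^{(1)},\ldots,f_n^{(d)})$, and let $g=(g_n)_{n\geq 1}$ with coordinates $g_n^{(i)}$ be a decoupled sum sequence. By Lemma \ref{lem:preddcoupl} (applied with the $\R$-valued map $h(x,s)=x^{(i)}$), each coordinate sequence $(g_n^{(i)})_{n\geq 1}$ is a decoupled sum sequence of $(f_n^{(i)})_{n\geq 1}$ in the scalar case.

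Next I would estimate, using $\|y\|_{\ell^{\infty}_{(d)}}^p = \max_i |y^{(i)}|^p \leq \sum_{i=1}^d |y^{(i)}|^p$ for $p\geq 1$ and the scalar decoupling inequality \eqref{uniformR}:
\begin{align*}
\E \n f_n \n_{\ell^{\infty}_{(d)}}^p \leq \sum_{i=1}^d \E |f_n^{(i)}|^p \leq D_{\R}^p \sum_{i=1}^d \E |g_n^{(i)}|^p \leq d\, D_{\R}^p\, \E \n g_n \n_{\ell^{\infty}_{(d)}}^p,
\end{align*}
where in the last step I use $\sum_i |g_n^{(i)}|^p \leq d\,\max_i |g_n^{(i)}|^p$. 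Taking $p$-th roots yields $\n f_n\n_p \leq d^{1/p} D_{\R} \n g_n\n_p$.

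Finally, the hypothesis $p\geq \log d/\log 2 = \log_2 d$ gives $d^{1/p} \leq d^{1/\log_2 d} = 2$, so $\n f_n \n_p \leq 2D_{\R} \n g_n \n_p$, as desired. There is no real obstacle here; the argument is essentially the observation that on $\R^d$ the norms $\n\cdot\n_{\ell^p_{(d)}}$ and $\n\cdot\n_{\ell^{\infty}_{(d)}}$ differ by at most a factor of $d^{1/p}$, which is bounded by $2$ precisely in the range $p\geq \log_2 d$.
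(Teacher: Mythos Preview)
Your proof is correct and follows essentially the same route as the paper. The paper phrases the middle step slightly more compactly by invoking Corollary \ref{cor:LqY} (which gives $D_p(\ell^p_{(d)})=D_{\R}$) in place of your explicit coordinate-wise application of the scalar decoupling inequality, but the underlying argument---sandwiching $\ell^{\infty}_{(d)}$ between $\ell^p_{(d)}$ norms at a cost of $d^{1/p}\leq 2$---is identical.
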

\begin{proof}
Recall from Corollary \ref{cor:LqY} that $D_p(\ell_p)=D_{\mathbb{R}}$ and
hence for any $\ell^{\infty}_{(d)}$ valued $L^p$-sequence $f$ with decoupled
sum sequence $g$ and any $n\in \N$, one has by H\"older's inequality:
\begin{align*}
(\E \n f_n \n^p_{\ell^{\infty}_{(d)}})^{\inv{p}}&\leq (\E \n f_n \n^p_{\ell^{p}_{(d)}})^{\inv{p}}
 \leq D_{\R} (\E \n g_n \n^p_{\ell^{p}_{(d)}})^{\inv{p}}\\
& \leq D_{\R} d^{\inv{p}}(\E \n g_n \n^p_{\ell^{\infty}_{(d)}})^{\inv{p}}
\leq 2D_{\R} (\E \n g_n \n^p_{\ell^{\infty}_{(d)}})^{\inv{p}}.
\end{align*}
\end{proof}

\begin{remark}\label{r:invnorms}
As in \cite{HitMS} the $L^p$-norms in the decoupling inequality \eqref{decoup} can be replaced by certain rearrangement invariant quasi-norms: Let $X$ be a quasi-Banach space satisfying the decoupling inequality and let $Y$ be a $(p,q)$-$K$-interpolation space for some $0< p,q < \infty$ on some complete probability space $(\Omega,\Sigma,\P)$. Then there exists a constant $D$ such that for all sequences $(f_n)_{n\geq 1}$ for which $\n f_n \n_X\in Y$ for all $n\geq 1$, with decoupled sum sequence $(g_n)_{n\geq 1}$ one has:
\begin{align*}
\big\n \n f_n \n_X \big\n_{Y} & \leq D\big\n \n g_n \n_X \big\n_{Y}, \quad \textrm{for all } n\geq 1.
\end{align*}
The proof of this statement is entirely analogous to \cite[Corollary 1.4]{HitMS}. Examples of $(p,q)$-$K$-interpolation spaces include all $(p,q)$-interpolation spaces with $1\leq p,q \leq \infty$ and the Lorentz spaces $L_{p,q}$ for $0<p,q< \infty$. Recall that a rearrangement invariant space $Y$ is an $(p,q)$-interpolation space if the Boyd indices $p_0,q_0$ satisfy $p<p_0$, $q>q_0$ \cite{Boyd69}.\par
\end{remark}

\begin{remark}
Let $X$ be a \textsc{umd} space and let $\mathcal{H}$ be the Hilbert transform on $L^p(\R;X)$ (or equivalently the periodic Hilbert transform on $L^p(0,2\pi;X)$). The estimate $\|\mathcal{H}\|_{\mathcal{L}(L^p(\R;X))}\leq \beta_p(X)^2$, where $\beta_p(X)$ is the \textsc{umd} constant of $X$, is the usual estimate in the literature (see \cite{Garling:BMandUMD,Bu3}). As the proofs in \cite{Garling:BMandUMD} and \cite{Bu3} involve only Paley-Walsh martingales, it follows that one actually has:
\begin{equation*}
\|\mathcal{H}\|_{\mathcal{L}(L^p(\R;X))}\leq C_p(X) D_p(X),
\end{equation*}
where $C_p(X)$ and $D_p(X)$ are as in \eqref{decoup-2}. Recall that $\max\{C_p(X),D_p(X)\}\leq \beta_p(X)$. Moreover, the behavior of $D_p(X)$ as $p\downarrow 1$ is better than $\beta_p(X)$. Indeed, according to Theorem \ref{t:allpq} one has $\sup_{p\in [1,2]} D_p(X)<\infty$, but $\beta_p(X)\to \infty$ as $p\downarrow 1$. Although we do not know whether $\sup_{p\in [2, \infty)}D_p(X)<\infty$, still a similar behavior occurs for the norm of $\mathcal{H}$ as $p\to \infty$. This follows from a duality argument. Indeed, recall that $X^*$ is a \textsc{umd} space again, and if $p\in [2, \infty)$, then with $1/p+1/p'=1$, we find:
\[\|\mathcal{H}\|_{\mathcal{L}(L^p(\R;X))}=\|\mathcal{H}^*\|_{\mathcal{L}(L^{p'}(\R;X^*))} \leq C_{p'}(X^*) D_{p'}(X^*).\]
Now $\sup_{p\in [2, \infty)}D_{p'}(X^*)<\infty$. Moreover, $C_{p'}(X^*)\leq \beta_{p'}(X^*) \leq \beta_p(X)$ by a duality argument.
\end{remark}

\section{Applications to stochastic integration}\label{s:bdg}
In this section let $X$ be a Banach space, $(\Omega,(\mathcal{F}_t)_{t\geq 0},P)$ a complete probability space and $H$ a real separable Hilbert space.

Recall from the introduction that it has been proven independently by both
Hitczenko \cite{hitczenko:notes} and McConnell \cite{mcconnell:decoup} that
$X$ is a \textsc{umd} space if and only if the two-sided decoupling inequality
holds for $1<p<\infty$. It has been shown in \cite{vanNeervenVeraarWeis} how this
two-sided decoupling inequality leads to a generalization of the
Burkholder-Davis-Gundy inequalities for $X$-valued stochastic integrals for
$1<p<\infty$ (inequality \eqref{bdg} below). A variant of the decoupling
inequality considering only conditionally symmetric sequences, presented in
\cite[Theorem 3']{hitczenko:notes}, will allow us to obtain continuous time
inequalities for \textsc{umd} spaces for $p\in
(0,\infty)$ (see Theorem \ref{t:stochint_bdg} below). The same technique can be applied to obtain one-sided
Burkholder-Davis-Gundy inequalities for spaces in which the decoupling
inequality holds.

Before formulating the generalized Burkholder-Davis-Gundy inequalities we
recall some theory on stochastic integration in Banach spaces as introduced in
\cite{vanNeervenVeraarWeis}.

An {\em $H$-cylindrical $(\mathcal{F}_t)_{t\geq 0}$-Brownian motion} is a mapping $$W_{H}:L^2(0,T;H)\rightarrow L^2(\Omega)$$ with the following properties:
\begin{enumerate}
\item for all $h\in L^2(0,T;H)$ the random variable $W_{H}(h)$ is Gaussian;
\item for all $h_1,h_2\in L^2(0,T;H)$ we have $\E W_{H}(h_1)W_{H}(h_2)=\langle h_1,h_2\rangle$;
\item for all $h\in H$ and all $t\in [0,T]$ we have that $W_H(\one_{[0,t]}\otimes h)$ is $\mathcal{F}_t$-measurable;
\item for all $h\in H$ and all $s,t\in [0,T]$, $s\leq t$ we have $W_H(\one_{[s,t]}\otimes h)$ is independent of $\mathcal{F}_s$.
\end{enumerate}
One easily checks that $W_{H}$ is linear and that for all $h_1,\ldots,h_n \in L^2(0,T;H)$ the random variables $W_{H}(h_1),\ldots,W_{H}(h_n)$ are jointly Gaussian. These random variables are independent if and only if $h_1,\ldots,h_n$ are orthogonal in $H$. With slight abuse of notation we will write $W_{H}(t)h:=W_H(\one_{[0,t]}\otimes h)$ for $t\in [0,T]$ and $h\in H$.\par
\begin{defn}
A process $\Psi:[0,\infty)\times \Omega \rightarrow \mathcal{L}(H,X)$ is called $H$-\emph{strongly
measurable} if for every $h\in H$ the process $\Psi h$ is strongly measurable.
The process is called $(\mathcal{F}_t)_{t\geq 0}$-\emph{adapted} if $\Psi h$ is $(\mathcal{F}_t)_{t\geq 0}$-adapted for each $h\in H$.
The process $\Psi$ is said to be {\em scalarly in $L^{0}(\Omega; L^2(0,T;H))$} if for all $x^*\in X^*$, $\Phi^*x^*\in L^{0}(\Omega; L^2(0,T;H))$.
\end{defn}

To build stochastic integrals of $\mathcal{L}(H,X)$-valued processes we start by
considering $(\mathcal{F}_t)_{t\geq 0}$-adapted finite rank step processes, i.e.\ processes of the form
\begin{align}\label{frsp}
\Psi(t,\omega)= \sum_{n=1}^{N}\one_{(t_{n-1},t_n]}(t) \sum_{m=1}^{M} h_m
\otimes \xi_{nm}(\omega),
\end{align}
where $0=t_0<t_1<...<t_N=T$, $\xi_{nm}\in L^0(\mathcal{F}_{t_{n-1}}; X)$ and
$(h_m)_{m\geq 1}$ is an orthonormal system in $H$. If $W_H$ is an
$H$-cylindrical $(\mathcal{F}_t)_{t\geq 0}$-Brownian motion, then
the stochastic integral process of $\Psi$ with respect to $W_H$ is given by:
\begin{align*}
\int_0^{t} \Psi \,dW_H &=\sum_{n=1}^{N} \sum_{m=1}^{M}
(W_H(t_n\minsym t)h_m-W_H(t_{n-1}\minsym t)h_m) \xi_{nm},
\end{align*}
for $t\in [0,T]$. Note that $t\mapsto \int_{0}^{t} \Psi \,dW_H$ is continuous almost surely.
For general Banach-space valued processes the stochastic integral is defined as
follows:
\begin{defn}\label{d:Lp_stochInt}
Let $W_H$ be an $H$-cylindrical $(\mathcal{F}_t)_{t\geq 0}$-Brownian motion. An
$H$-strongly measurable $(\mathcal{F}_t)_{t\geq 0}$-adapted process $\Psi:[0,T]\times \Omega \rightarrow
\mathcal{L}(H,X)$ is called \emph{stochastically integrable with respect to $W_H$} if
there exists a sequence of finite rank $(\mathcal{F}_t)_{t\geq 0}$-adapted step processes
$\Psi_n:[0,T]\times \Omega\rightarrow \mathcal{L}(H,X)$ such that:
\begin{enumerate}
\item for all $h\in H$ we have $\lim_{n \rightarrow \infty}\Psi_n h =  \Psi h$
in measure on $[0,T]\times \Omega$;
\item there exists a process $\zeta\in L^0(\Omega; C([0,T]; X))$ such that
\begin{align*}
\lim_{n\rightarrow \infty}\int_{0}^{\cdot} \Psi_n \,dW_H & = \zeta \qquad
\textrm{in } L^0(\Omega; C([0,T]; X)).
\end{align*}
\end{enumerate}
We define $\int_0^{\cdot} \Psi \,dW_H := \zeta$.
\end{defn}
The $\gamma$-radonifying operators defined below generalize the concept of Hilbert-Schmidt
operators and prove to be useful in the context of vector-valued stochastic
integration:
\begin{defn}
Let $X$ be a Banach space and $\mathcal{H}$ a Hilbert space. An operator $R\in
\mathcal{L}(\mathcal{H},X)$ is said to be a $\gamma$-radonifying if there exists an orthonormal
basis $(h_n)_{n\geq 1}$ of $\mathcal{H}$ such that if
$(\gamma_n)_{n\geq 1}$ is a sequence of independent Gaussian random
variables on $\Omega$, the Gaussian series $\sum_{n=1}^{N}\gamma_n R h_n$
converges in $L^2(\Omega,X)$. We define
\begin{align*}
\Vert R \Vert_{\gamma(\mathcal{H},X)} &:=\Big( \mathbb{E} \Big\Vert
\sum_{n=1}^{\infty} \gamma_n R h_n \Big\Vert^{2} \Big)^{\inv{2}}.
\end{align*}
(One checks that this norm is independent of the orthonormal basis.)
\end{defn}

Now we consider the case where $\mathcal{H}= L^2(0,T;H)$ and $H$ is a Hilbert space as before.
Let $\Psi:[0,T]\times\O\to \mathcal{L}(H,X)$ be $H$-strongly measurable and such that a.s.\
for all $x^*\in X^*$, $\Psi^* x^* \in L^2(0,T;H)$.
Then for almost all $\omega\in \O$ we can define a Pettis integral operator $R_{\Psi}(\omega)\in \mathcal{L}(\mathcal{H},X)$ as follows:
\begin{align*}
R_{\Psi}(\omega)f &:= \int_{0}^{T} \Psi(t,\omega)f(t)\, dt,
\end{align*}
for $f\in L^2([0,T];H)$.
In the following we simply identify $R_{\Psi}$ with $\Psi$, i.e.\ we write
$\Psi$ instead of $R_{\Psi}$ and set
\begin{align*}
\Vert \Psi(\cdot, \omega) \Vert_{\gamma(0,T;H,X)}&:= \Vert R_\Psi(\omega) \Vert_{\gamma(L^2(0,T;H),X)},
\end{align*}
whenever $R_\Psi(\omega)\in \gamma(L^2(0,T;H),X)$.

The following Theorem is an extension of some results presented in \cite{vanNeervenVeraarWeis}. We prove it by using the decoupling inequalities \eqref{umddecoup} and the inequality in Remark \eqref{r:geiss}. Note that we write $A \lesssim_t B$ if there exists a constant $C$ depending only on a parameter $t$, such that $A\leq C B$. Naturally $A \gtrsim_t B$ means $B \lesssim_t A$ and $A\eqsim_t B$ means
$A \lesssim_t B$ and $B \lesssim_t A$.
\begin{thm}\label{t:stochint_bdg}
Let $X$ be a Banach space and $H$ be a separable Hilbert space. Let $W_H$ be an
$H$-cylindrical $(\mathcal{F}_t)_{t\geq 0}$-Brownian motion. Let $\Psi:[0,T]\times \Omega \rightarrow \mathcal{L}(H,X)$ be an $H$-strongly
measurable and $(\mathcal{F}_t)_{t\geq 0}$-adapted process which is scalarly in $L^{0}(\Omega; L^2(0,T;H))$.
\begin{enumerate}
\item[(1)] If $X$ is a \textsc{umd} space, then the
following assertions are equivalent:
\begin{enumerate}
\item[(i)] $\Psi$ is stochastically integrable with respect to $W_H$;
\item[(ii)] $\Psi\in \gamma(0,T;H,X)$ a.s.
\end{enumerate}
Moreover, for $p\in (0,\infty)$ the following
continuous time Burkholder-Davis-Gundy inequalities hold:
\begin{align}\label{bdg}
\E \sup_{0\leq t \leq T}\Big\Vert \int_{0}^{t} \Psi \,dW_H \Big\Vert^p
&\eqsim_{p, X} \E \Vert \Psi \Vert^p_{\gamma(0,T;H,X)}.
\end{align}
\item[(2)] If $X$ satisfies the decoupling
inequality then (ii) $\Rightarrow$ (i) above still holds and for $p\in
(0,\infty)$ there exists a constant $\kappa_{p,X}$ such that:
\begin{align}\label{bdg_ineq}
\E \sup_{0\leq t \leq T}\Big\Vert \int_{0}^{t} \Psi \,dW_H \Big\Vert^p
&\leq \kappa_{p,X}^p \E \Vert \Psi \Vert^p_{\gamma(0,T;H,X)},
\end{align}
whenever the right-hand side is finite. Moreover, one can take $\kappa_{p,X}$ such that $\sup_{p\geq 1} \kappa_{p,X}/p<\infty$.
\end{enumerate}
\end{thm}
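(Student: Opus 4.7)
The plan is to reduce the continuous-time statement to the discrete decoupling inequality applied at the partition points, and then exploit that the decoupled integrand, conditional on the adapted data, is a genuine Gaussian sum whose law is encoded exactly by the $\gamma$-radonifying norm. I would first restrict to adapted finite-rank step processes $\Psi$ as in \eqref{frsp}; the extension to general $\Psi$ follows from the approximation built into Definition \ref{d:Lp_stochInt} together with standard stability of $L^p$-estimates under convergence in probability.

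For such a step process, set $f_k := \int_0^{t_k}\Psi\,dW_H$. Since the Brownian increments are centred Gaussian, each $d_k = f_k - f_{k-1}$ is $\F_{t_{k-1}}$-conditionally symmetric. On an enlarged probability space carry an independent copy $\tilde W_H$ of $W_H$ and define $e_k$ to be the analogue of $d_k$ with $W_H$ replaced by $\tilde W_H$. Lemma \ref{lem:preddcoupl} applied with $h_k(x,s) = s\cdot x$ and the $\F_{t_{k-1}}$-predictable coefficients $s=\xi_{km}$ identifies $(e_k)$ as an $\F_\infty$-decoupled tangent sequence of $(d_k)$ that is also $\F_\infty$-conditionally symmetric. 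Corollary \ref{c:allp} with $\Phi(x) = x^p$ then supplies
\begin{equation*}
\E \sup_{k\leq N}\n f_k \n^p \;\leq\; C_{X,r,p}\, \E\, \Big\n \sum_{k=1}^N e_k \Big\n^p.
\end{equation*}
Conditioning on $\F_\infty := \sigma(W_H,(\xi_{nm}))$, the random variable $\sum_k e_k$ is a centred Gaussian in $X$ whose conditional covariance operator is precisely the Pettis operator $R_\Psi$ attached to $\Psi$, so its conditional $L^2$-moment equals $\n\Psi\n_{\gamma(0,T;H,X)}$. Gaussian Kahane--Khintchine promotes this to an $L^p$-estimate with a factor $\lesssim \sqrt p$, yielding \eqref{bdg_ineq} for step processes; path continuity of $t\mapsto \int_0^t\Psi\,dW_H$ upgrades the partition supremum to the continuous one by partition refinement, and the approximation of Definition \ref{d:Lp_stochInt} transfers the inequality to general $\Psi$.

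For part (1) the UMD property supplies the reverse inequality in \eqref{decoup-2} for $p\in(1,\infty)$; the same conditioning-on-$\F_\infty$ argument then yields the matching lower bound $\n\Psi\n_\gamma^p \lesssim_p \E\n\int_0^T\Psi\,dW_H\n^p$, and the equivalence (i) $\Leftrightarrow$ (ii) follows from the Cauchy-sequence characterisation of stochastic integrability applied to approximating step processes, as in \cite{vanNeervenVeraarWeis}. For $p\in(0,1]$ the reverse direction needs to be supplied by a Lenglart-type domination argument, since \eqref{decoup-2} is stated only for $p>1$.

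The main obstacle I foresee is the sharp constant bound $\sup_{p\geq 1}\kappa_{p,X}/p<\infty$ in part (2): combining $D_p(X)=O(p)$ from \eqref{eq:geissest} with the Gaussian Kahane factor $O(\sqrt p)$ naively produces $O(p^{3/2})$. Avoiding this double-counting seems to require either running Proposition \ref{p:allp2} directly with $\Phi(x)=x^p$ and $q=p$, so that the Gaussian moment gets absorbed into an $L^p$-characterisation of the $\gamma$-norm, or extrapolating from the reference point $p=2$ via a good-$\lambda$ inequality tight enough to preserve linear growth in $p$.
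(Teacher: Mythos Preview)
Your core reduction for finite-rank step processes --- form the partial sums $f_k=\int_0^{t_k}\Psi\,dW_H$, recognise the increments as conditionally symmetric, decouple via an independent copy $\widetilde W_H$, identify the decoupled sum as a conditionally Gaussian vector whose conditional second moment is the $\gamma$-norm, and then refine the partition --- is exactly the paper's argument. Two points where your sketch diverges deserve comment.

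For part~(1) with $p\in(0,1]$ you propose a Lenglart-type domination because \eqref{decoup-2} is stated only for $p>1$. The paper instead invokes \eqref{umddecoup} (Hitczenko's Theorem~3$'$), which for \textsc{umd} spaces already gives the \emph{two-sided} estimate $\|f^*\|_p \eqsim_{p,X} \|g\|_p$ for \emph{all} $p\in(0,\infty)$ provided the increments are conditionally symmetric. Since Brownian increments are conditionally symmetric, this applies directly and no separate extrapolation below $p=1$ is needed. (The paper does mention Lenglart as an alternative route, so your suggestion is not wrong, just less direct.)

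For the bound $\sup_{p\geq1}\kappa_{p,X}/p<\infty$ in part~(2) you correctly diagnose that chaining $D_p(X)=O(p)$ with the Kahane--Khintchine factor $O(\sqrt p)$ only gives $O(p^{3/2})$. The paper's resolution is your second suggestion made precise: once the BMO-type hypothesis \eqref{pallpass} has been verified at the fixed base exponent $2$ (this is exactly what \eqref{eq:bmocheck}--\eqref{eq:decoupp} do in the proof of Theorem~\ref{t:allpq}), the extrapolation recorded in Remark~\ref{r:geiss} yields
\[
\|f^*\|_p \;\leq\; c_{X,2}\, p\, \|T_2(f)\|_p \qquad (p\geq 1),
\]
with $c_{X,2}$ independent of $p$. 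The decisive observation is that $T_2(f)$ is the \emph{conditional $L^2$-norm} of the decoupled Gaussian sum, and this equals $\|\Psi\|_{\gamma(0,T;H,X)}$ \emph{on the nose}, so no Kahane--Khintchine factor ever enters. Your first suggestion (run Proposition~\ref{p:allp2} with $\Phi(x)=x^p$, $q=p$) does not work: the constant \eqref{decoup_const} grows exponentially in $q$, not linearly. The linear growth genuinely requires the sharper BMO extrapolation of \cite{geiss:BMO}, combined with the identity $T_2(f)=\|\Psi\|_{\gamma}$.
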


\begin{remark}
The constants in \eqref{bdg} and \eqref{bdg_ineq} are independent of $T$, and it is not difficult to see that one can also take $T=\infty$. Since every \umd space satisfies the decoupling inequality (see Corollary \ref{cor:umd}), the estimate \eqref{bdg_ineq} holds for \umd spaces $X$ with the same behavior of the constant $\kappa_{p,X}$. Already for $X = L^q$ with $q\neq 2$, it is an open problem whether the optimal constant $\kappa_{p,X}$ satisfies $\sup_{p\geq 1} \kappa_{p,X}/\sqrt{p}<\infty$. For Hilbert spaces this is indeed the case.
\end{remark}

For Theorem \ref{t:stochint_bdg} (1) the equivalence of (i) and (ii) and the
Burkholder-Davis-Gundy inequalities for $p>1$ are given in \cite[Theorems 5.9
and 5.12]{vanNeervenVeraarWeis}. Thus for the proof it remains to prove the Burkholder-Davis-Gundy inequalities for the case
$0<p\leq 1$. From \cite[Theorem 3']{hitczenko:notes} (see also
\cite[Proposition 2]{CoxVeraar}) we know that if $X$ is a \textsc{umd} space
then one has:
\begin{align}\label{umddecoup}
\Vert f^* \Vert_p \eqsim_{p,X} \Vert g \Vert_p,
\end{align}
for all $p\in (0,\infty)$, for $(\F_n)_{n\geq 1}$-adapted $X$-valued $L^p$-sequences $(f_n)_{n\geq 1}$ on some complete probability space such that $f_n-f_{n-1}$ is $\F_{n-1}$-conditionally symmetric for all $n\geq 1$. The idea of the proof of Theorem
\ref{t:stochint_bdg} is taken from \cite[Lemma 3.5]{vanNeervenVeraarWeis}), an alternative approach would be to use the extrapolation results in \cite{lenglart}.

\begin{remark} \
\begin{enumerate}
\item Let $(\Omega, \mathcal{F},\P,(\mathcal{F}_i)_{i=0}^{n})$ be a probability space endowed with a filtration. Let $n\in \N$ and let $g_1,\ldots,g_n$ be independent standard Gaussian random variables on $(\Omega, \mathcal{F},\P,(\mathcal{F}_i)_{i=1}^{n})$ such that $g_i$ is $\mathcal{F}_{i}$-measurable and independent of $\mathcal{F}_{i-1}$. Let $(\tilde{g}_1,\ldots,\tilde{g}_n)$ be a copy of $(g_1,\ldots,g_n)$ independent of $(\Omega, \mathcal{F},\P,(\mathcal{F}_i)_{i=0}^{n})$. From the proof of Theorem \ref{t:stochint_bdg} it follows that in order to prove that the Burkholder-Davis-Gundy inequality (5.3) is satisfied for processes in a Banach space $X$, for some $p\in (0,\infty)$, it suffices to prove that there exists a constant $c_p$ such that
\begin{align}\label{bdg_core}
\Big\n \sup_{1\leq j\leq n} \Big\n \sum_{i=1}^{j} g_i v_{i-1} \Big\n \Big\n_{L^p(\Omega)}&\leq c_p \Big\n \sum_{i=1}^{n} \tilde{g}_i v_{i-1} \Big\n_{L^p(\Omega\times\tilde{\Omega},X)}
\end{align}
for all $(v_i)_{i=0}^{n-1}$ an $(\mathcal{F}_{i})_{i=0}^{n-1}$-adapted sequence of $X$-valued simple random variables and all $n\in \N$. For this it is sufficient that the decoupling inequality holds for $p$, but we do not know whether it is necessary. Similarly, in order to prove \eqref{bdg} it suffices to prove that one has a two-sided estimate in \eqref{bdg_core}. For this it is known that it is necessary and sufficient that $X$ is a \textsc{umd} Banach space (see \cite{Garling:BMandUMD}).
\item By studying the proof of \cite[Theorem 3']{hitczenko:notes} one may conclude
that if \eqref{umddecoup} holds for some $p\in (1,\infty)$, it holds for all
$p\in(1,\infty)$. As a result, and by considering Paley-Walsh martingales,
one can also prove that if \eqref{umddecoup} holds
in a Banach space $X$ for some $p\in (1,\infty)$, for all $X$-valued $L^p$-sequences $f$ with conditionally symmetric increments, then $X$ is a
\textsc{umd} space.
\item If for a Banach space $X$ there exists a
$p\in(1,\infty)$ such that \eqref{bdg} holds for all stochastically integrable
processes $\Phi$ then $X$ is a \textsc{umd} space (see \cite{Garling:BMandUMD}).
\item Suppose the filtration $(\F_t)_{t\geq 0}$ in Theorem \ref{t:stochint_bdg} has the form $$\F_t = \sigma(W_H(s) h: s\leq t, h\in H)$$ for each $t\in [0,\infty)$. In this case \eqref{bdg} for some $p\in (0,\infty)$ can be derived from \eqref{umddecoup} for Paley--Walsh martingales for that $p$. Similarly, \eqref{bdg_ineq} for some $p\in(0,\infty)$ can be derived from the corresponding one-sided estimate in \eqref{umddecoup} for Paley--Walsh martingales for that $p$. This follows from a central limit theorem argument as in \cite[Theorem 3.1]{GeMSS}. Conversely, \eqref{bdg_ineq} implies the one-sided estimate in \eqref{umddecoup} for Paley--Walsh martingales (see \cite{veraar:umdBM}).
\end{enumerate}
\end{remark}

\begin{proof}
(1): \ Let $p\in (0,\infty)$ be fixed and let $\Psi$ be
a finite-rank step process of the form \eqref{frsp} with $\xi_{nm}\in L^{\infty}(\mathcal{F}_{t_{n-1}},X)$, $W_H$ an $H$-cylindrical $(\mathcal{F}_t)_{t\geq 0}$-Brownian motion and let $\widetilde{W}_H$ be a copy of $W_H$ that is independent of $\F_{\infty} =
\sigma(\bigcup_{t\geq 0} \F_t)$. Then
\begin{align*}
\left(\sum_{m=1}^{M}  (W_H(t_n)h_m-W_H(t_{n-1})h_m) \xi_{nm} \right)_{n=1}^{N}
\end{align*}
is a $\F_{\infty}$-conditionally symmetric sequence and a
$\F_{\infty}$-decoupled version is given by $\left(\sum_{m=1}^{M}
(\widetilde{W}_H(t_n)h_m-\widetilde{W}_H(t_{n-1})h_m) \xi_{nm} \right)_{n=1}^{N}$.
One has:
\begin{align*}
& \E \sup_{1\leq j \leq N} \Big\Vert \int_0^{t_j} \Psi \,dW_H \Big\Vert^p\\
& \qquad \qquad =\E \sup_{1\leq j \leq N} \Big\Vert \sum_{n=1}^{j} \sum_{m=1}^{M} (W_H(t_n)h_m-W_H(t_{n-1})h_m) \xi_{nm}\Big\Vert^p\\
& \qquad \qquad \stackrel{(i)}{\eqsim}_{p,X} \E \Big\Vert \sum_{n=1}^{N} \sum_{m=1}^{M}  (\widetilde{W}_H(t_n)h_m-\widetilde{W}_H(t_{n-1})h_m) \xi_{nm}\Big\Vert^p \\
& \qquad \qquad \stackrel{(ii)}{\eqsim}_{p,X} \Big(\E \Big\Vert \sum_{n=1}^{N} \sum_{m=1}^{M}  (\widetilde{W}_H(t_n)h_m-\widetilde{W}_H(t_{n-1})h_m) \xi_{nm}\Big\Vert^2\Big)^{\frac{p}{2}} \\
& \qquad \qquad  = \E \Vert \Psi \Vert^p_{\gamma(0,T;H, X)},
\end{align*}
where equation (i) follows from equation \eqref{umddecoup} and
equation (ii) follows by the Kahane-Khintchine inequality (see \cite[Section 1.3]{delaPG}).

For $n\in\mathbb{N}$ let $D_n$ be the $n^{\textrm{th}}$ dyadic partition of
$[0,T]$, i.e.\ $D_n:=\{\tfrac{k}{2^n}:k=0,1,2,\ldots \}\cap [0,T]$ and define
$\widetilde{D}_n:=D_n\cup \{t_1,\ldots,t_N\}$. Then by the above one has:
\begin{align*}
\E \sup_{t\in \widetilde{D}_n} \Big\Vert \int_0^{t} \Psi \,dW_H \Big\Vert^p & \eqsim_{p,X}  \E \Vert \Psi
\Vert^p_{\gamma(0,T;H, X)}, \ \
n\in\mathbb{N}.
\end{align*}
By the monotone convergence theorem and path continuity of the integral process one has:
\begin{align*}
\E \sup_{t\in [0,T]} \Big\Vert \int_0^{t} \Psi \,dW_H \Big\Vert^p &=
\lim_{n\rightarrow \infty} \E \sup_{t\in \widetilde{D}_n} \Big\Vert \int_0^{t}
\Psi \,dW_H \Big\Vert^p \eqsim_{p,X}  \E \Vert \Psi \Vert^p_{\gamma(0,T;H, X)}.
\end{align*}
Hence equation \eqref{bdg} holds for finite-rank step processes.

Now let $\Psi$ be any stochastically integrable process. Suppose
\begin{align*}
\E \sup_{0\leq t \leq T}\Big\Vert \int_{0}^{t}\Psi \,dW_H\Big\Vert^p<\infty,
\end{align*}
then by an approximation argument as in the proof of \cite[Theorem 5.12]{vanNeervenVeraarWeis}) one has:
\begin{align*}
\E \Vert \Psi \Vert^p_{\gamma (0,T;H,X)} & \lesssim_{p,X} \E \sup_{t\in[0,T]}\Big\Vert \int_{0}^{t} \Psi \,dW_H
\Big\Vert^p.
\end{align*}
Hence it suffices to prove \eqref{bdg} under the assumption that
$\E \Vert \Psi \Vert^p_{\gamma(0,T;H,X)}<\infty$.
By a straightforward adaptation of the proof of \cite[Proposition 2.12]{vanNeervenVeraarWeis} one
can prove that there exists a sequence of finite-rank $(\mathcal{F}_t)_{t\geq 0}$-adapted step processes
$(\Psi_n)_{n\geq 1}$ that converges to $\Psi$ in
$L^p(\Omega,\gamma(0,T;H,X))$. Hence in particular for all $x^*\in X^*$, $\Psi^*_n x^*\to \Psi^* x^*$ in $L^0(\O;L^2(0,T;H))$. Because \eqref{bdg} holds for finite-rank
$(\mathcal{F}_t)_{t\geq 0}$-adapted step processes the sequence
$\left( \int_{0}^{\cdot}\Psi_n
\,dW_H \right)_{n\geq 1}$
is a Cauchy sequence in $L^p(\Omega, C([0,T];X))$. In particular
$(\Psi_n)_{n\geq 1}$ approximates the stochastic integral of $\Psi$ in the
sense of Definition \ref{d:Lp_stochInt} and
\begin{align*}
\E\sup_{0\leq t \leq T} \Big\Vert\int_{0}^{t} \Psi \,dW_H(t) \Big\Vert^p &= \lim_{n\rightarrow \infty}\E\sup_{0\leq t
\leq T} \Big\Vert\int_{0}^{t} \Psi_n \,dW_H(t) \Big\Vert^p\\
& \eqsim_{p,X} \lim_{n\rightarrow \infty}\E\Vert \Psi_n \Vert^p_{\gamma(0,T;H,X)} = \E\Vert \Psi \Vert^p_{\gamma(0,T;H,X)}.
\end{align*}

(2): \ Suppose $X$ satisfies the decoupling inequality. In this case the proof for finite-rank $(\mathcal{F}_t)_{t\geq 0}$-adapted step processes given above can be repeated using the inequality in Corollary \ref{c:allp} instead of equation \ref{umddecoup}. To prove \eqref{bdg_ineq}
for arbitrary processes we repeat the argument in (1) concerning the case
that
$\E \Vert \Psi \Vert^p_{\gamma(0,T;H,X)}<\infty$.\par
However, in order to obtain the estimate $\sup_{p\geq 1}\kappa_{X,p}/p<\infty$ we use inequality \eqref{eq:geissest} in the proof of Theorem \ref{t:allpq} in the following manner: when $p\in [1,\infty)$ we have
\begin{align*}
&\E \sup_{1\leq j \leq N} \Big\Vert \int_0^{t_j} \Psi \,dW_H \Big\Vert^p\\
& \quad    =\E \sup_{1\leq j \leq N} \Big\Vert \sum_{n=1}^{j} \sum_{m=1}^{M} (W_H(t_n)h_m-W_H(t_{n-1})h_m) \xi_{nm}\Big\Vert^p\\
& \quad \leq c_{X,2} p \, \Big\|\E\Big(\Big\Vert \sum_{n=1}^{j} \sum_{m=1}^{M}  (\widetilde{W}_H(t_n)h_m-\widetilde{W}_H(t_{n-1})h_m) \xi_{nm}\Big\Vert^2\, \Big|\, \F_{\infty} \Big)^{1/2}\Big\|_{p}^p
\\ & \quad  = c_{X,2} p \, \E \Vert \Psi \Vert^p_{\gamma(0,T;H, X)}.
\end{align*}
\end{proof}

If $X$ has type $2$, then by \cite{vanNW05} one has that $L^2(0,T;\gamma(H,X))\hookrightarrow
\gamma(0,T;X)$, hence Theorem \ref{t:stochint_bdg} (2) implies
inequality \eqref{bdg_ineq_L2} below (as was already observed in
\cite{vanNeervenVeraarWeis}). This inequality has been proven for $p\in (1,\infty)$
in \cite{Brz97}, \cite{Brz03} using different techniques.

\begin{corol}\label{cor:bdgL2}
If $X$ is a Banach space satisfying the decoupling inequality (e.g.\ a
\textsc{umd} space) and $X$ has type 2 then for each $p\in (0,\infty)$ there is a constant $\mathcal{C}_{p,X}$ such that one has:
\begin{align}\label{bdg_ineq_L2}
\E \sup_{0\leq t \leq T}\Big\Vert \int_{0}^{t} \Psi \,dW_H \Big\Vert^p &\leq \mathcal{C}_{p,X}
\E \Vert \Psi \Vert^p_{L^2(0,T;\gamma(H,X))},
\end{align}
whenever the right-hand side is finite.
\end{corol}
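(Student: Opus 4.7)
The proof is essentially a direct chain of two already-established facts, so I would not expect much resistance here.

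The plan is to combine the general Burkholder-Davis-Gundy inequality from Theorem \ref{t:stochint_bdg}(2) with the continuous embedding
\[
L^2(0,T;\gamma(H,X)) \hookrightarrow \gamma(0,T;H,X),
\]
which holds whenever $X$ has type $2$ (this is the cited result from \cite{vanNW05}). Let $\iota_X$ denote the norm of this embedding; it depends only on the type $2$ constant of $X$. First, I would invoke Theorem \ref{t:stochint_bdg}(2) to obtain
\[
\E \sup_{0\leq t \leq T}\Big\n \int_{0}^{t} \Psi \,dW_H \Big\n^p \leq \kappa_{p,X}^p \, \E \Vert \Psi \Vert^p_{\gamma(0,T;H,X)},
\]
provided the right-hand side is finite. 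Second, I would apply the embedding pointwise in $\omega$ to conclude $\Vert \Psi(\cdot,\omega) \Vert_{\gamma(0,T;H,X)} \leq \iota_X \Vert \Psi(\cdot,\omega) \Vert_{L^2(0,T;\gamma(H,X))}$, so that taking $p$-th moments yields
\[
\E \Vert \Psi \Vert^p_{\gamma(0,T;H,X)} \leq \iota_X^p \, \E \Vert \Psi \Vert^p_{L^2(0,T;\gamma(H,X))}.
\]
Chaining these two inequalities gives \eqref{bdg_ineq_L2} with $\mathcal{C}_{p,X} = \kappa_{p,X}^p \iota_X^p$.

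One subtle point to check is that the finiteness hypothesis is correctly threaded through: we assume $\E \Vert \Psi \Vert^p_{L^2(0,T;\gamma(H,X))}<\infty$, which by the embedding forces $\E \Vert \Psi \Vert^p_{\gamma(0,T;H,X)}<\infty$, so Theorem \ref{t:stochint_bdg}(2) applies and in particular $\Psi$ is stochastically integrable. The only possible obstacle is purely cosmetic: reconciling the form of the constant with the statement of \eqref{bdg_ineq_L2} (and noting that the pathwise embedding requires $\Psi(\cdot,\omega)\in L^2(0,T;\gamma(H,X))$ for almost every $\omega$, which is immediate from the finiteness of the right-hand side). No new ideas beyond those already developed in the paper and in \cite{vanNW05} are needed.
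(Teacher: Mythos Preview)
Your proposal is correct and follows exactly the same route as the paper: the corollary is deduced directly from Theorem \ref{t:stochint_bdg}(2) together with the embedding $L^2(0,T;\gamma(H,X))\hookrightarrow \gamma(0,T;H,X)$ for type $2$ spaces from \cite{vanNW05}. Your additional remark on threading the finiteness hypothesis through the embedding is a welcome clarification but introduces nothing new.
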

As in Theorem \ref{t:stochint_bdg} one again has $\sup_{p\geq 1}\mathcal{C}_{p,X}/p<\infty$ if $\mathcal{C}_{p,X}$ is the optimal constant in \eqref{bdg_ineq_L2}. However, in \cite{Seidlernew} it has been recently proved that one has $\sup_{p\geq 1}\mathcal{C}_{p,X}/\sqrt{p}<\infty$ in \eqref{bdg_ineq_L2}.\par

\end{document}